\documentclass[11pt,reqno]{amsart}
\usepackage{graphicx}
\usepackage{color}
\usepackage{amsmath,amssymb,amsthm,amsfonts}
\usepackage{mathrsfs}
\usepackage{graphicx}
\usepackage{color}
\usepackage{multicol}
\def\R{\mathbb{R}}

\makeatletter{\normalsize }

\newcommand{\Rmnum}[1]{\expandafter\@slowromancap\romannumeral #1@}
\makeatother

\newtheorem{thm}{Theorem}[section]

{\huge }

\newtheorem{lemma}[thm]{Lemma}
\newtheorem{remark}{Remark}[section]
\newtheorem{theorem}[thm]{Theorem}

\usepackage[numbers,sort&compress]{natbib}

\begin{document}
\author{Hai-Yang Jin}
\address{School of Mathematics, South China University of Technology, Guangzhou 510640, P.R. China}
\email{mahyjin@scut.edu.cn}
\author{Jingyi Mai}
\address{School of Mathematics, South China University of Technology, Guangzhou 510640, P.R. China}
\email{majymai@163.com}

\title[Reaction-diffusion model  with dual-dependent motility]{Global boundedness and stabilization for a  three-component reaction-diffusion model with dual-dependent motility}
\begin{abstract}
In this paper, we consider the initial-boundary value problem of a three-component reaction-diffusion system with dual-dependent motility (which depends  on both \( v \) and \( m \)):  
\begin{equation}\tag{*}\label{1-1}
        \begin{cases}
             u_{t} = \Delta [\gamma ( v, m ) u ] + \alpha u m, & x \in \Omega, ~ t > 0,\\
             v_{t} =  \Delta v - v + (\theta_{1} + \theta_{2} m) u, & x \in \Omega, ~ t > 0,\\
             m_{t} = \Delta m - u m, & x \in \Omega, ~ t > 0,\\
             \nabla (u \gamma( v, m )) \cdot \nu = \nabla v \cdot \nu = \nabla m \cdot \nu = 0, & x \in \partial \Omega, ~ t > 0,\\
             (u, v, m)(x, 0) = (u_{0}, v_{0}, m_{0})(x), & x \in \Omega,
         \end{cases}
    \end{equation}
where \( \Omega \subset \mathbb{R}^2 \) is a bounded domain with a smooth boundary. Here \( \alpha \), \( \theta_1 \) and \( \theta_2 \) are non-negative constants, and \( \nu \) denotes the outward unit normal vector on \( \partial\Omega \). The random motility function \( \gamma(v,m) \), depending on both the chemical concentration \( v \) and the nutrient level \( m \), satisfies:  
\begin{itemize}
\item \( \gamma (v, m) \in C^{3} \left([0,\infty) \times [0,\infty)\right) \), \( \gamma (v, m) > 0 \), \( \gamma_{v} (v, m) \leq 0 \), \( \gamma_{m} (v, m) \geq 0 \) on \( [0,\infty) \times [0,\infty) \); moreover, \( \lim\limits_{v \to + \infty} \gamma (v, m) = 0 \) for any \( m \geq 0 \).  
\end{itemize}
For \( \theta_1 = 0 \), we first establish the existence of global classical solutions with uniform-in-time bounds for the system \eqref{1-1}, and further prove that the solution \( (u,v,m) \) converges to the constant steady state \( (u_*,0,0) \) as \( t \to \infty \), where \( u_{*} = \frac{1}{| \Omega |} \left(\Vert u_{0} \Vert_{L^{1}} + \alpha \Vert m_{0} \Vert_{L^{1}}\right) \). For \( \theta_1 > 0 \), we demonstrate the global boundedness of solutions under appropriate decay conditions on \( \gamma(v,m) \). We further show that there exists \( \theta_* > 0 \) such that if \( 0 < \theta_1 \leq \theta_* \), all solutions converge to \( (u_*,\theta_1 u_*,0) \) as \( t \to \infty \) with \( u_* \) defined as above.

Our analysis employs refined energy estimates, moser iteration techniques, and careful exploitation of the structural properties of \( \gamma(v,m) \). These results characterize the solution behavior in both cases (\( \theta_1 = 0 \) and \( \theta_1 > 0 \)), highlighting the crucial role of dual-dependent motility in the system's long-term dynamics.

\end{abstract}

\subjclass[2000]{35A01, 35B40, 35B44, 35K57, 35Q92, 92C17}

\keywords{Density-dependent motion, global existence, global stabilization, uniform-in-time bound}

\maketitle

\numberwithin{equation}{section}
\section{Introduction}
In this paper, we consider the  initial-boundary value problem for the following three-component reaction-diffusion system  with density-dependent motility
    \begin{equation}\label{sys-1}
        \begin{cases}
             u_{t} = \Delta [\gamma ( v, m ) u ] + \alpha u m, & x \in \Omega, ~ t > 0,\\
             v_{t} =  \Delta v - v + (\theta_{1} + \theta_{2} m) u, & x \in \Omega, ~ t > 0,\\
             m_{t} = \Delta m - u m, & x \in \Omega, ~ t > 0,\\
             \nabla (u \gamma( v, m )) \cdot \nu = \nabla v \cdot \nu = \nabla m \cdot \nu = 0, & x \in \partial \Omega, ~ t > 0,\\
             (u, v, m)(x, 0) = (u_{0}, v_{0}, m_{0})(x), & x \in \Omega,
         \end{cases}
    \end{equation}
where $\Omega \subset \mathbb{R}^{n} (n\geq 1)$ is a  bounded domain with smooth boundary and $\nu$ stands for the outward unit normal vector on $\partial \Omega$. The parameters $\alpha\geq 0$ and $\theta_{1}, \theta_{2} \geq 0$. The motility response function $\gamma(v,m)$ depends on both chemical concentration $v$ and the nutrient $n$, exhibiting chemical-immobilizing and resource-mobilizing properties in the sense that $$\gamma_v(v,m)\leq 0\ \ \mathrm{and}\ \ \gamma_m(v,m)\geq 0.$$ 
Such kind of dual-dependent motility function $\gamma(v,m)$ was first derived in \cite{Kim-P} by considering  dweller-wanderer interactions, where the organisms adjust their movement patterns (e.g., velocity, turning frequency, residence time) in response to dual environmental gradients: resource availability and pheromone concentration. This regulation hinges on two opposing yet complementary responses, which collectively optimize the trade-off between local exploitation (by dwellers) and global exploration (by wanderers) in a population. For the system \eqref{sys-1}, existing results \cite{Winkler-M3AS-2025} primarily focus on simplified cases where the motility function $\gamma$ depends exclusively on a single variable: either $\gamma(v, m) = \phi(v)$ or $\gamma(v, m) = \varphi(m)$. 
  
\textbf{Case 1: $\gamma (v, m)=\phi(v)$.} In this case, when $\theta_2=0$, the system \eqref{sys-1} reduces to the following form:
 \begin{equation}\label{OR}
        \begin{cases}
             u_{t} = \Delta [\phi(v) u] + \alpha u m, & x \in \Omega, ~ t > 0,\\
             v_{t} =  \Delta v - v + \theta_{1}u, & x \in \Omega, ~ t > 0,\\
             m_{t} = \Delta m - u m, & x \in \Omega, ~ t > 0,\\
             \nabla (\phi(v) u) \cdot \nu = \nabla v \cdot \nu = \nabla m \cdot \nu = 0, & x \in \partial \Omega, ~ t > 0,\\
             (u, v, m)(x, 0) = (u_{0}, v_{0}, m_{0})(x), & x \in \Omega,
         \end{cases}
    \end{equation}
which was first proposed in \cite{Liu-Science-2011} to gain a quantitative understanding of the periodic strip pattern formation. For the system \eqref{OR}, the global existence, uniform boundedness, and stabilization of classical solutions have been intensively investigated recently by virtue of two distinct methods: energy estimate \cite{TW-M3AS-2017,JSW-2020-JDE,JW-PAMS-2020} and comparison method \cite{FJ-2020-JDE,FJ-2021-AAM,FJ-2021-CVPDE,FS-2022-NA,JL-2021-JDE,JLZ-2022-CPDE}. More precisely, when $n=2$ and $\alpha=0$, the first result on the global boundedness of classical solution for the system \eqref{OR} was obtained by Tao and Winkler \cite{TW-M3AS-2017} under the assumption that $0<\phi_0\leq\phi(v)\leq \phi_1$ by using dual arguments. Based on the ingenious  comparison method, Fujie and Jiang \cite{FJ-2021-CVPDE} relax the assumption $0<\phi_0\leq\phi(v)\leq \phi_1$ and establish global existence for any positive, non-increasing motility function $\phi(v)$ that converges to zero at infinity, even allowing the function to vanish asymptotically.  Moreover, for any non-increasing motility function decaying slower than any negative exponential function at infinity, the uniform-in-time bound of classical solution was obtained \cite{AY-2019-Nonlinearity,FJ-2021-AAM}. If $\phi(v)=e^{-\chi v}$ for $\chi>0$, a critical mass phenomenon was found \cite{BLT-2021-JLMS,FJ-2020-JDE,FJ-2021-CVPDE,JW-PAMS-2020} in the sense that  if $\int_\Omega u_0 dx <\frac{4\pi}{\chi\theta_1}$, the solution with uniform-in-time bound exists, while the solution will blow up at infinity time if $\int_\Omega u_0 dx >\frac{4\pi}{\chi\theta_1}$.
 

For higher dimensions ($n\geq 3$), it has been proved that the system \eqref{OR} with $\alpha=0$ has a unique global classical solution for arbitrary positive motility function \cite{FS-2022-NA}. The uniform-in-time boundedness of classical solution was also studied extensively for both the simplify parabolic-elliptic case \cite{AY-2019-Nonlinearity,FJ-2021-AAM,JL-2021-JDE}  or the full parabolic-parabolic case \cite{FJ-2021-AAM,FJ-2021-CVPDE}. This results imply that $\phi(s)\sim  s^{-k}$ as $s\to\infty$ with any  $k<\frac{n}{n-2}$ seems to the optimal condition for the existence of  global classical  of solution with uniform-in time bound when $n\geq 3$.  The results on global existence, uniformly boundedness and  stabilization of constant steady state also been extended to the case $\alpha>0$, see \cite{JSW-2020-JDE,FJ-2021-CVPDE}. At last, we should mention that except the global boundedness and stabilization of classical solution for the system \eqref{OR}, there also some results on the weak solution \cite{DKTY-2019-NA,TW-M3AS-2017}. There also some interesting results on the system \eqref{OR} with logistic growth including boundedness \cite{FJ-2020-JDE,JKW-2018-SIAM}, non-constant steady state \cite{WX-2021-IMA,MPW-PD-2020} and traveling wave \cite{LW-JDE-2021}.

\textbf{Case 2: $\gamma (v, m)=\varphi(m)$.} In this case, the component $v$ decouples from $u, m$ and hence the system \eqref{sys-1} becomes 
\begin{equation}\label{OR-2}
        \begin{cases}
             u_{t} = \Delta [\varphi (m) u] + \alpha u m, & x \in \Omega, ~ t > 0,\\
             m_{t} = \Delta m - u m, & x \in \Omega, ~ t > 0,\\
             \nabla (\varphi(m)u) \cdot \nu = \nabla m \cdot \nu = 0, & x \in \partial \Omega, ~ t > 0,\\
             (u, m)(x, 0) = (u_{0},  m_{0})(x), & x \in \Omega.
         \end{cases}
    \end{equation}
The model \eqref{OR-2} has been used to describe the bacterial populations how adjust their motility in response to the concentration of surrounding nutrients consumed upon contact. For instance, motility may increase under starvation conditions \cite{EK-BMB-2013} or decrease when nutrients are scarce, such as in poor agar environments \cite{LMP-PA-2013}. Compared with chemotaxis-production interaction system \eqref{OR}, few results are  available for the system \eqref{OR-2} due to the lack of some favorable structural properties.  The solution behaviors of \eqref{OR-2} depend on the fact that   $\varphi(m)$ is degenerate or not. More precisely, if $\varphi'(m)\leq 0$, the system \eqref{OR-2} can be viewed as the simplest predator–prey system with density-dependent motility \cite{PO-AN-1987,JW-EJAM-2021}. If $1\leq n\leq 2$, it has been proved in \cite{LY-NARWA-2023,WW-APL-2022} that the system \eqref{OR-2} has a unique  global classical  solution $(u,m)$, which converges to the constant steady state $(u_*,0)$ with $u_*=\frac{1}{|\Omega|}\int_\Omega u_0 dx+\frac{\alpha}{|\Omega|}\int_\Omega m_0dx$. The results obtained in \cite{LY-NARWA-2023,WW-APL-2022,LW-CMS-2023} are based on the fact $\varphi(m)$  is strictly positive throughout $[0,\infty)$ and hence non-degenerate occurs. In contrast to the case $\varphi'(m)\leq 0$, the solution behavior is much more complex due to the possible degeneracy of $\varphi(m)$. In fact, if $\varphi(m)>0$ but $\varphi(0)=0$, the global classical solution of the system \eqref{OR-2} still exist in the case of $1\leq n\leq 2$, which however will approach a non-constant steady state $(u_\infty,0)$ as $t\to\infty $ when $m_0$  is appropriately small \cite{W-AIHP-2024}. These results in \cite{LY-NARWA-2023,WW-APL-2022,LW-CMS-2023} imply that the degeneracy of signal-dependent motility plays an important role for the existence of non-constant large-time patterns in related taxis-consumption systems. Beyond the existence of global classical solution for the system \eqref{OR-2} in lower dimensions ($1\leq n\leq 2$), the existence of global very weak solution for any dimensions has also been established in \cite{LW-CMS-2023,W-Nonlinearity-2023} for any $\phi\in C^3([0,\infty))$ satisfying $\varphi>0$ on $[0,\infty)$.

To our  knowledge, there is few result on the chemotaxis model with motility response function $\gamma(v,m)$ depending on both chemical concentration $v$ and the nutrient $m$. In this paper, we assume that the motility function $\gamma(v, m)$ satisfies the following properties:
\begin{itemize}
    \item[(H1)] $\gamma (v, m) \in C^{3} ([0,\infty) \times [0,\infty))$, $\gamma (v, m) > 0$ and $\gamma_{v} (v, m) \leq 0$, $\gamma_{m} (v, m) \geq 0$ on $[0,\infty) \times [0,\infty)$. Moreover, for any $m\geq0$, it holds that 
   \begin{equation*}
        \lim\limits_{v \to + \infty} \gamma (v, m) = 0.
    \end{equation*}
\end{itemize}
 In this paper, we aim to address several fundamental questions regarding the behavior of solutions to the three-component reaction-diffusion system with a density-dependent motility function $\gamma(v,m)$ that depends on both the chemical concentration $v$ and the nutrient $m$. Specifically, we investigate the conditions on $\gamma(v,m)$ under which solutions exist globally in time, whether these solutions stabilize to constant or non-constant steady states as time evolves, and how the interplay between chemical sensing and nutrient sensing influences the formation of spatial patterns. These questions are motivated by the need to extend previous studies that focused on simplified cases where $\gamma$ depends solely on $v$ or $m$, and to provide a more comprehensive understanding of systems where the motility is regulated by multiple environmental factors. 

Throughout this paper, we assume that the initial value satisfies
\begin{equation}\label{ID}
    (u_{0}, v_{0}, m_{0}) \in [W^{1, \infty} (\Omega)]^{3}, \quad u_{0} \geq 0, \quad u_0 \not\equiv 0, \quad m_{0} \geq 0, \quad  v_{0} > 0 \quad \text{in} \ \bar{\Omega}.
\end{equation}
Next, we will investigate the global boundedness and large time behavior of classical solutions to \eqref{sys-1} for two distinct cases: $\theta_1=0$ and $\theta_1>0$. If $\theta_1=0$, then we have the following results on the global dynamics for the solution behavior.
\begin{theorem}[Boundedness and stabilization: $\theta_1=0$]\label{GBS-1}
    Let $\Omega \subset\R^2$ be a bounded domain with smooth boundary and $\theta_1=0$. Suppose that $\gamma$ satisfies the assumption (H1), and that the initial data $(u_{0}, v_{0}, m_{0})$ satisfies \eqref{ID}. Then the system \eqref{sys-1} has a unique  non-negative global classical solution $(u, v, m) \in [C(\bar{\Omega} \times [0,\infty)) \cap C^{2,1}(\bar{\Omega} \times (0,\infty))]^{3}$ satisfying
    \begin{equation*}
        \|u(\cdot,t)\|_{L^\infty} + \|v(\cdot,t)\|_{W^{1,\infty}} + \|m(\cdot,t)\|_{W^{1,\infty}} \leq C,
    \end{equation*}
where $C>0$ is a constant independent of $t$. Moreover, it holds that  
    \begin{equation*}
        \lim_{t \to \infty} (\|u(\cdot, t) - u_{*}\|_{L^{\infty}} + \|v(\cdot, t)\|_{L^{\infty}} + \|m(\cdot, t)\|_{L^{\infty}}) = 0,
    \end{equation*}
    where $u_{*} = \frac{1}{| \Omega |} (\|u_{0}\|_{L^{1}} + \alpha \|m_{0}\|_{L^{1}})$.
\end{theorem}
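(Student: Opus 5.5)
The plan is to first obtain local existence and an extensibility criterion by standard parabolic theory (Amann's framework, or a fixed-point argument as in Tao--Winkler), and then to derive basic a priori bounds. Nonnegativity of $u,m$ and positivity of $v$ follow from the maximum principle. Moreover $\|m(\cdot,t)\|_{L^\infty}\le \|m_0\|_{L^\infty}$ and $\frac{d}{dt}\int_\Omega (u+\alpha m)=0$. Integrating the $m$-equation gives $\int_0^\infty\int_\Omega um\le \int_\Omega m_0$. With $\theta_1=0$ the $v$-equation reads $v_t=\Delta v-v+\theta_2 mu$, whose source $\theta_2 um$ lies in $L^1(\Omega\times(0,\infty))$. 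This is exactly where $\theta_1=0$ helps: the total chemical production is finite.

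The key structural step is to obtain an upper bound for $v$, which gives a positive lower bound $\gamma(v,m)\ge \gamma(\bar v,0)>0$ by monotonicity ($\gamma_v\le0$, $\gamma_m\ge0$). Following Fujie--Jiang, I would introduce the auxiliary function $w=(I-\Delta)^{-1}u$. Then $w_t+\gamma(v,m)u=(I-\Delta)^{-1}[\gamma u]+\alpha(I-\Delta)^{-1}[um]$. However, since $\gamma$ depends on two variables, the comparison $(I-\Delta)^{-1}[\gamma u]\le \gamma(0,\|m_0\|_\infty)\,w$ is still available because $\gamma\le \gamma(0,\|m_0\|_{L^\infty})=:\gamma^*$. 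This yields $w_t+\gamma u\le \gamma^* w+\alpha(I-\Delta)^{-1}[um]$, hence $w\le C e^{Ct}$ on finite intervals. Since $\theta_1=0$, $v$ is not directly controlled by $w$, so instead I would bound $v$ by estimating $mu$.

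My first route for that is an energy functional. I would test the $u$-equation by $\ln u$, or better use the duality estimate
\[
\int_0^T\!\!\int_\Omega \gamma u^2\le C\int_0^T\!\!\int_\Omega u\,(I-\Delta)^{-1}u+\dots,
\]
combined with the $L^1$ bound of $um$. This gives $L^2$-type space-time control of $u$ with weight $\gamma$, and in 2D one then bootstraps $v\in L^\infty$ via semigroup estimates on $v_t=\Delta v-v+\theta_2 mu$ with $\|m\|_\infty$ bounded. The plan is to get $\sup_{t}\int_t^{t+1}\!\int_\Omega u^2\le C$ first on finite times (using $\gamma\ge\gamma(\bar v(T),0)$ locally), and then to turn it uniform by using that $\int_t^{t+1}\int_\Omega um\to0$.

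This step, a uniform-in-time upper bound of $v$ despite possibly degenerate $\gamma$ as $v\to\infty$, is the main obstacle. What I would try first is to show $\|v\|_{L^\infty}\le C$ via Neumann heat semigroup smoothing: $\|v(t)\|_\infty\le \|v_0\|_\infty+\theta_2\|m_0\|_\infty\int_0^t e^{-(t-s)}\|e^{(t-s)\Delta}u(s)\|_\infty ds$, estimated by $\|u\|_{L^p}$ for some $p>1$. That $L^p$ bound would come from an $L\log L$ or $L^2$ estimate for $u$ obtained from the energy with $\gamma$ bounded below on the region where $v$ is controlled, closed by a continuity argument.

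Once $0<\gamma_*\le\gamma\le\gamma^*$ holds, I would derive $L^p$ bounds for $u$ by testing with $u^{p-1}$. The cross term $\int u^{p-1}\nabla u\cdot(\gamma_v\nabla v+\gamma_m\nabla m)$ is handled using $\nabla v,\nabla m\in L^q$, obtained from maximal regularity in 2D, and Gagliardo--Nirenberg. Moser iteration then gives $\|u\|_\infty\le C$, and parabolic regularity gives the $W^{1,\infty}$ bounds for $v$ and $m$.

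For stabilization, I would proceed as follows.
\begin{itemize}
\item From $\int_0^\infty\!\int_\Omega um<\infty$ and $\int_\Omega u\ge\int_\Omega u_0>0$, together with $\frac{d}{dt}\int_\Omega m^2+2\int_\Omega|\nabla m|^2\le0$ and the Poincaré inequality, deduce $m\to0$: first $\bar m\to0$, then $m\to0$ uniformly by Hölder compactness.
\item Then $v\to0$, since its source $\theta_2 um\to0$ in $L^\infty$ and it decays like $e^{-t}$.
\item For $u$, use the functional $\int_\Omega u\,(I-\Delta)^{-1}u$, or test by $(-\Delta)^{-1}(u-\bar u)$, to obtain $\int_0^\infty\!\int_\Omega|u-\bar u|^2<\infty$ up to errors that are integrable in time. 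Here $\bar u\to u_*$ because $\int_\Omega u=\int_\Omega(u_0+\alpha m_0)-\alpha\int_\Omega m$.
\end{itemize}
Combining this with uniform Hölder bounds and interpolation yields $\|u-u_*\|_{L^\infty}\to0$.
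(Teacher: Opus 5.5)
\textbf{There is a genuine gap at the step you yourself flag as the main obstacle: the uniform upper bound for $v$.} Your bootstrap does not close under (H1) alone.

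Every $L^p$ bound ($p>1$) for $u$ that you can extract from the duality or energy estimates passes through a lower bound for $\gamma$. This happens either via $\|u\|_{L^p}\le\|u\gamma^{1/2}\|_{L^2}\|\gamma^{-1/2}\|_{L^{2p/(2-p)}}$, or via $\gamma\ge\gamma(V,0)$ once you assume $v\le V$. The bound you then obtain for $v$ from the semigroup is therefore some $F(V)$. Nothing makes $F(V)<V$: (H1) allows $\gamma(\cdot,0)$ to decay arbitrarily fast, and no smallness is available. This is the same mechanism that forces exponential-decay and mass conditions when $\theta_1>0$.

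The other ingredients you invoke do not rescue the argument:
\begin{itemize}
\item Knowing $\theta_2um\in L^1(\Omega\times(0,\infty))$, or even $\int_t^{t+1}\int_\Omega um\to0$, does not control $\|v\|_{L^\infty}$ in two dimensions. The $L^1\to L^\infty$ smoothing factor $(t-s)^{-1}$ of the heat semigroup is not integrable at $s=t$.
\item A bound on $[0,T]$ in terms of $\sup_{[0,T]}v$ does not exclude blow-up at a finite $T_{\max}$, unless that supremum is controlled a priori. That control is the very point at issue.
\item Your comparison for $w=(I-\Delta)^{-1}u$ gives only $w\le Ce^{Ct}$. As you note, it says nothing about $v$ when $\theta_1=0$.
\end{itemize}

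\textbf{The missing idea is a pointwise cancellation, not an integral one.} The production term of $v$ is exactly $\theta_2$ times the consumption term of $m$. Add $\theta_2$ times the $m$-equation to the $v$-equation and set $z=v+\theta_2m$. Then $z$ satisfies $z_t-\Delta z+z=\theta_2 m\in[0,\theta_2\|m_0\|_{L^\infty}]$ with homogeneous Neumann data. The constant $\max\{\|v_0+\theta_2m_0\|_{L^\infty},\theta_2\|m_0\|_{L^\infty}\}$ is a supersolution. Hence $0\le v\le z$ is bounded uniformly in time, with no condition on $\gamma$ beyond (H1). This gives at once $0<\gamma_0\le\gamma(v,m)\le\gamma_1$ and bounded $\gamma_v,\gamma_m$.

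With this in hand, the rest of your plan is essentially the paper's and goes through:
\begin{itemize}
\item the duality estimate for $\int_t^{t+\tau}\int_\Omega u^2$;
\item gradient bounds for $v$ and $m$;
\item $L^2$ and $L^4$ bounds for $u$, then Moser iteration;
\item $\int_0^\infty\int_\Omega um<\infty$ and $\int_0^\infty\int_\Omega|\nabla m|^2<\infty$, giving $m\to0$, hence $v\to0$ and $u\to u_*$.
\end{itemize}

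One caution on your $H^{-1}$ argument for $u$. The term $\bar u\int_\Omega\gamma(v,m)(u-\bar u)$ is not obviously integrable in time. You can absorb it by Young's inequality once $\gamma(v,m)\to\gamma(0,0)$ uniformly. Alternatively, use the paper's functional $\int_\Omega(u-u_*)^2+c\int_\Omega(v^2+m^2)$, whose forcing is bounded by $C\int_\Omega m\to0$.
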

\begin{remark}
The results in Theorem \ref{GBS-1} imply that the solution will not blow up in two-dimensional spaces and no pattern formation occurs if the chemical signal produced by food.
\end{remark}
\begin{remark}
In Theorem \ref{GBS-1}, we assume that $\gamma(v,m)\in C^3([0,\infty)\times [0,\infty))$. However, when $\theta_1=0$, from Theorem \ref{GBS-1}, we know that $v$ converges to zero as $t\to \infty$. This raises an interesting problem: what can be said about the global dynamics for the case $\gamma(v,m)=m/v^k$?
\end{remark}
\begin{remark}
In Theorem \ref{GBS-1}, we only establish the global existence and boundedness of solutions in two-dimensional spaces. An intriguing open question is whether we can derive the global boundedness of solutions for the system \eqref{sys-1} with $\theta_1=0$ in higher dimensions.
\end{remark}
If $\theta_1>0$, the behavior of solutions exhibits distinct characteristics. We now elaborate on our findings regarding the global boundedness and stabilization of solutions for the system \eqref{sys-1} with $\theta_1>0$.
\begin{theorem}[Boundedness and stabilization: $\theta_1>0$]\label{GBS-2}
    Let $\Omega \subset\R^2$ be a bounded domain with smooth boundary and $\theta_1>0$. If $\gamma$ satisfies the assumption (H1) and there exists  $\chi>0$ such that  
    \begin{equation}\label{GBS-2*}
        \liminf_{s \to + \infty} e^{\chi s} \gamma (s, m) > 0, \quad \text{for all} \ m \geq 0,
    \end{equation}
    and the initial data $(u_{0}, v_{0}, m_{0})$ satisfies \eqref{ID} with
    \begin{equation*}
        \|u_{0} + \alpha m_{0}\|_{L^{1}} < \frac{4 \pi}{\chi(\theta_1+\theta_2 \|m_0\|_{L^\infty})}.
    \end{equation*}
    Then the system \eqref{sys-1} has a unique global classical solution with uniform-in-time bound. Moreover, there exists a constant $\theta_*>0$ such that if $0<\theta_1\leq \theta_*$, then 
    \begin{equation*}
        \lim_{t \to \infty} \left( \Vert u(\cdot, t) - u_{*} \Vert_{L^{\infty}} + \Vert v(\cdot, t) - \theta_{1} u_{*} \Vert_{L^{\infty}} + \Vert m(\cdot, t) \Vert_{L^{\infty}} \right) = 0,
    \end{equation*}
    where $u_{*} = \frac{1}{| \Omega |} (\Vert u_{0} \Vert_{L^{1}} + \alpha \Vert m_{0} \Vert_{L^{1}})$.
            
    In particular, if  $\gamma$ satisfies the assumptions (H1) and \eqref{GBS-2*} for any $\chi>0$, then the system  \eqref{sys-1} still has a unique classical solution with uniform-in-time bound for any initial data $(u_{0}, v_{0}, m_{0})$ satisfying \eqref{ID}.
\end{theorem}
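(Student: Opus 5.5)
Our plan follows the comparison (auxiliary function) method of Fujie--Jiang, adapted to the dual dependence of $\gamma$. We start from local well-posedness and the extensibility criterion: if $T_{\max}<\infty$ then $\|u(\cdot,t)\|_{L^\infty}\to\infty$. We record the basic facts $0\le m\le \|m_0\|_{L^\infty}$ (maximum principle, since $um\ge0$) and the mass identity $\frac{d}{dt}\int_\Omega(u+\alpha m)=0$. Consequently $\|u(\cdot,t)\|_{L^1}\le \|u_0+\alpha m_0\|_{L^1}=:M$ and $\int_0^\infty\!\int_\Omega um<\infty$. The source term of the $v$-equation is then $f:=(\theta_1+\theta_2 m)u\le \bar\theta u$ with $\bar\theta:=\theta_1+\theta_2\|m_0\|_{L^\infty}$. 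Also $v\ge \delta>0$ on bounded time intervals, and $v$ stays bounded from below uniformly for $\theta_1>0$.

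\textbf{Key auxiliary function.} Let $w:=(I-\Delta)^{-1}u$ with Neumann data. Applying $(I-\Delta)^{-1}$ to the $u$-equation gives
\[
w_t+\gamma(v,m)u=(I-\Delta)^{-1}[\gamma(v,m)u]+\alpha(I-\Delta)^{-1}[um].
\]
Since $\gamma$ is nonincreasing in $v$ and nondecreasing in $m\le\|m_0\|_{L^\infty}$, we have $\gamma(v,m)\le\bar\gamma(v):=\gamma(v,\|m_0\|_{L^\infty})$, which is nonincreasing with $\bar\gamma(v)\le \gamma(0,\|m_0\|_{L^\infty})$. The right-hand side is therefore bounded by $C+\alpha(I-\Delta)^{-1}[um]$. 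This yields a pointwise upper bound for $w$ on finite time intervals, with a time-integrable contribution from $um$. Next we compare $v$ with $\bar\theta w$ and use the $v$-equation, exactly as in Fujie--Jiang. By the parabolic comparison principle with $(I-\Delta)^{-1}$ we obtain $v\le C(T)$ locally, hence $\gamma(v,m)\ge\gamma(C(T),0)>0$ on $[0,T]$. The $u$-equation is then uniformly parabolic on $[0,T]$, and a standard duality/$L^p$ argument followed by Moser iteration excludes finite-time blow-up. This gives global existence.

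\textbf{Uniform-in-time bounds.} This is the main obstacle: we need a time-independent upper bound for $v$. We plan to use an energy functional of the form $\int_\Omega u\ln u$ combined with a Lyapunov-type identity. Testing the $u$-equation against $\ln u$ and using $\gamma\ge c e^{-\chi v}$ for large $v$ (uniformly in $m\in[0,\|m_0\|_{L^\infty}]$, which needs a mild uniformity that we extract from monotonicity in $m$, since $\gamma(s,m)\ge\gamma(s,0)$) reduces matters to a Keller--Segel type structure with sensitivity $\chi$ and production rate at most $\bar\theta$. Alternatively, and more in the spirit of Fujie--Jiang, we estimate $\int_\Omega e^{\chi\bar\theta w}$. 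The Moser--Trudinger inequality together with the mass condition $M<4\pi/(\chi\bar\theta)$ then gives a uniform bound on $\int_\Omega e^{p v}$ for some $p>\chi$. From this follow a uniform $L^\infty$ bound on $v$, hence a uniform positive lower bound on $\gamma$, and uniform bounds via Moser iteration. In the case where \eqref{GBS-2*} holds for every $\chi>0$, we choose $\chi$ small enough that the mass condition is automatically satisfied. The delicate point is controlling the extra $\alpha um$ source uniformly in time, and we handle it through $\int_0^\infty\!\int_\Omega um<\infty$.

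\textbf{Stabilization.} From the uniform bounds and $\int_0^\infty\!\int um<\infty$, we first show $m\to0$: $\bar m$ decreases, and $um\to0$ in $L^1$ along with parabolic regularity. Since $u\not\equiv0$ has mass bounded below, this forces $\|m\|_{L^\infty}\to0$. For $u$, we use the Lyapunov functional
\[
\int_\Omega u\ln\frac{u}{u_*}+\frac{K}{2}\int_\Omega|\nabla v|^2 .
\]
Its dissipation contains $\int\gamma|\nabla u|^2/u$ and cross terms $\int u\gamma_v\nabla v\cdot\nabla u/u$ and $\int u\gamma_m\nabla m\cdot\nabla u/u$. The $v$-cross term is absorbed for small $\theta_1$, since $\|\nabla v\|$ is then controlled by $\theta_1\|\nabla u\|$ plus terms in $m$; this is where $\theta_*$ enters. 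The $m$-terms are integrable in time. We conclude that $\int_t^{t+1}\|\nabla u\|_{L^2}^2\to0$, hence $u\to u_*$ by compactness and Hölder bounds, and finally $v\to\theta_1u_*$ from the $v$-equation via the variation-of-constants formula.
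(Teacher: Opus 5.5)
There is a genuine gap, and it sits in the step you yourself flag as the main obstacle. A uniform bound on $\int_\Omega e^{pv}$ with $p>\chi$ does not give a uniform $L^\infty$ bound on $v$. It only says that $\gamma^{-1}(v,m)\le\gamma^{-1}(v,0)$ lies in $L^q(\Omega)$ for some $q>1$, uniformly in $t$. Your differential inequality for $w$ does not supply the missing bound either. Since $(I-\Delta)^{-1}[\gamma u]\le\gamma(0,\|m_0\|_{L^\infty})\,w$ is not a constant, what you actually get is $w_t\le(\gamma(0,\|m_0\|_{L^\infty})+\alpha\|m_0\|_{L^\infty})w$, which allows exponential growth. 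The fact $\int_0^\infty\!\int_\Omega um<\infty$ does not change this.

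The paper needs two further steps exactly here.
\begin{itemize}
\item First (Lemma \ref{L2B1}), test $w_t+u\gamma=\mathcal{A}^{-1}[u\gamma+\alpha um]$ with $u$ and use $\int_\Omega uw=\|w\|_{H^1}^2$. This gives
\[
\frac{d}{dt}\|w\|_{H^1}^2+\|w\|_{H^1}^2+\int_\Omega\gamma(v,m)u^2\le c\int_\Omega\frac{w^2}{\gamma(v,0)}.
\]
The right-hand side is bounded by H\"older, the exponential integrability and $\|w\|_{L^r}\le C\|u\|_{L^1}$. Hence $\int_t^{t+\tau}\!\int_\Omega\gamma u^2\le C$.
\item Second (Lemma \ref{L2B2}), take $p>1$ close to $1$. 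Then $\|w\|_{L^\infty}\le C\|u\|_{L^p}\le C\|\gamma^{1/2}u\|_{L^2}\|\gamma^{-1/2}\|_{L^{2p/(2-p)}}$, so $\int_t^{t+\tau}\|w\|_{L^\infty}\le C$. Only now does $w_t\le Cw$ turn this time average into a pointwise bound on $w$, and $v\le(1+\theta_1)\mathcal{K}_1w$ then bounds $v$.
\end{itemize}
Without this chain, or an equivalent one, you have no uniform positive lower bound on $\gamma$, and the Moser iteration cannot start.

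Three related points on the same step:
\begin{itemize}
\item The exponential integrability comes from the Brezis--Merle type estimate \eqref{LB-4} for $(I-\Delta)^{-1}$ with nonnegative $L^1$ data. Moser--Trudinger would need an $H^1$ bound on $w$, which is only available after the first step above.
\item The exponential integrability presupposes a comparison $v\le(1+\varepsilon)\bar\theta w+C_\varepsilon$ with $C_\varepsilon$ independent of $t$. This is obtained via $\Gamma^*(s)=\int_1^s\gamma(\eta,\|m_0\|_{L^\infty})\,d\eta\le\epsilon s+C_\epsilon$ (Lemmas \ref{LGB1} and \ref{LvS1}). Your sketch only records $v\le C(T)$.
\item Your first route via $\int_\Omega u\ln u$ is not viable under (H1) and \eqref{GBS-2*}. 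These hypotheses give no control of $\gamma_v$ relative to $\gamma$, hence no Keller--Segel type free energy.
\end{itemize}

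For stabilization, your functional $\int_\Omega u\ln(u/u_*)+\frac K2\int_\Omega|\nabla v|^2$ is a reasonable alternative to the paper's $L^2$ functional $\int_\Omega(u-u_*)^2+K\int_\Omega[(v-\theta_1u_*)^2+m^2]$. The absorption condition has the same shape: $\theta_1^2$ times a quantity built from $\sup_t\|u\|_{L^\infty}$, $\gamma_0^{-1}$, $\gamma_2$, $\gamma_3$ must be small. But this defines a threshold $\theta_*>0$ only if those quantities stay bounded as $\theta_1\downarrow0$. That is why the paper keeps every constant explicit in $\theta_1$: $\|v\|_{L^\infty}\le C_3(1+\theta_1)e^{C_4\theta_1}$ and $\|u\|_{L^\infty}\le C_5(1+\theta_1)^{12}e^{C_6(1+\theta_1)^6}$, with $C_i$ independent of $\theta_1$. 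Your boundedness sketch provides no such uniformity. In particular, the production coefficient $\theta_1+\theta_2m$ is not constant, so the Fujie--Jiang comparison does not apply verbatim. Any device that divides by $\theta_1$ to match $\gamma(v,\|m_0\|_{L^\infty})(\theta_1+\theta_2 m)u$ against $\bar\theta u\gamma$ would destroy the uniformity. A convenient alternative is to compare $z=v+\theta_2m$, which solves $z_t-\Delta z+z=\theta_1u+\theta_2m$.
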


\begin{remark}
Comparing the results in Theorem \ref{GBS-1} ($\theta_1=0$) and Theorem \ref{GBS-2} ($\theta_1>0$), we observe that the presence of chemical production ($\theta_1>0$) leads to different asymptotic behavior, with the chemical concentration $v$ stabilizing to $\theta_1 u_*$ rather than zero.
\end{remark}

\section{Local existence and Preliminaries}
We first establish the existence of local solutions to \eqref{sys-1} by employing the Schauder fixed point theorem along with the parabolic regularity theory \cite{JKW-2018-SIAM} or the Amann's theorem \cite{Amann-1993-FS,Amann-1990-DIE}. For brevity, we omit the details of the proof.
\begin{lemma}[Local existence]\label{LS}
    Let $\Omega \subset \mathbb{R}^{2}$ be a bounded domain with smooth boundary. Assume that $\gamma$ satisfies (H1) and the initial data $(u_{0}, v_{0}, m_{0})$ satisfies \eqref{ID}. 
    Then there exists $T_{\max} \in (0, \infty]$ such that the problem \eqref{sys-1} has a unique non-negative classical solution $(u, v, m) \in [C(\bar{\Omega} \times [0,T_{\max})) \cap C^{2,1}(\bar{\Omega} \times (0,T_{\max}))]^{3}$. If $T_{\max} < \infty$, then
    \begin{equation*}
        \limsup_{t \nearrow T_{\max}} \Vert u(\cdot, t) \Vert_{L^{\infty}} = \infty.
    \end{equation*}
\end{lemma}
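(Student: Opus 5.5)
The plan is to rewrite \eqref{sys-1} in divergence form and run a Schauder fixed point argument, as in \cite{JKW-2018-SIAM}. Expanding the diffusion term gives
\begin{equation*}
u_t=\nabla\cdot\big(\gamma(v,m)\nabla u+u\,\gamma_v(v,m)\nabla v+u\,\gamma_m(v,m)\nabla m\big)+\alpha u m .
\end{equation*}
The boundary condition $\nabla(\gamma u)\cdot\nu=0$ reduces to $\partial_\nu u=0$, because $\partial_\nu v=\partial_\nu m=0$ and $\gamma>0$. This is a quasilinear upper-triangular parabolic system with cross-diffusion only in the $u$-equation, and it is normally parabolic as long as $v,m$ remain in a bounded set where $\gamma\ge\gamma_0>0$.

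Fix $R>\|u_0\|_{L^\infty}+1$ and $T\in(0,1)$ small. Let $X$ be the closed convex set of $\bar u\in C(\bar\Omega\times[0,T])$ with $0\le\bar u\le R$. Given $\bar u\in X$, the fixed-point map is built in three steps.
\begin{itemize}
\item Solve the linear equation $m_t=\Delta m-\bar u m$. The comparison principle gives $0\le m\le\|m_0\|_{L^\infty}$.
\item Solve $v_t=\Delta v-v+(\theta_1+\theta_2 m)\bar u$. Comparison gives $v\ge e^{-t}\min v_0>0$ and $v\le\|v_0\|_{L^\infty}+C(R)$.
\item Parabolic $L^p$ and Schauder theory give $\|\nabla v\|_{L^\infty},\|\nabla m\|_{L^\infty}\le C(R)$, using $W^{1,\infty}$ data and semigroup gradient estimates.
\end{itemize}
On this bounded range of $(v,m)$, (H1) gives $\gamma\ge\gamma_0(R)>0$ and bounds $\gamma_v,\gamma_m$. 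We then solve the linear divergence-form equation for $u$ with coefficients $\gamma(v,m)$, drift $\gamma_v\nabla v+\gamma_m\nabla m$ (bounded), and zero-order term $\alpha m$. Nonnegativity follows from the maximum principle. An $L^\infty$ bound $\|u\|_{L^\infty}\le R$ for small $T$ follows from Moser iteration or from the Neumann heat-kernel estimates for divergence-form operators with bounded drift. De Giorgi--Nash gives a Hölder bound on $u$, so the map $\bar u\mapsto u$ sends $X$ into itself and is compact. Its continuity follows from uniqueness of the linear problems, so Schauder yields a fixed point.

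Bootstrapping with Schauder theory and $\gamma\in C^3$ upgrades the fixed point to a classical solution in $C^{2,1}(\bar\Omega\times(0,T])$. Uniqueness follows from an energy estimate on the difference of two solutions. One tests the difference of the $u$-equations with $u_1-u_2$ in $L^2$, controls the $v,m$ differences in $W^{1,2}$ or $W^{1,4}$ by parabolic estimates, and closes with Gronwall. Alternatively, one can invoke Amann's theory \cite{Amann-1993-FS,Amann-1990-DIE}, which gives local existence and uniqueness of the triangular normally parabolic system in $W^{1,p}$ with $p>2$ directly.

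Nonnegativity of $u$ and $m$ and positivity of $v$ hold on the whole existence interval by the maximum principle. Extend the solution to its maximal interval $[0,T_{\max})$. For the extensibility criterion, suppose $T_{\max}<\infty$ while $\|u\|_{L^\infty}\le K$ on $(0,T_{\max})$. Then, as above, $v,m$ are bounded above, $v$ is bounded below by $e^{-T_{\max}}\min v_0$, and $\nabla v,\nabla m$ are bounded in $L^\infty$. Hence $\gamma(v,m)$ is bounded below by a positive constant and the $W^{1,\infty}$-type norms stay bounded. The existence time given by the fixed-point step depends only on these bounds, so the solution can be continued past $T_{\max}$, which is a contradiction. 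The main obstacle is this uniform dependence of the local existence time on $\|u\|_{L^\infty}$ alone. It requires $\nabla u$-independent Hölder/$L^\infty$ estimates for the $u$-equation, which the divergence-form rewriting with bounded drift makes available, since $W^{1,\infty}$ control of $v,m$ follows from $\|u\|_{L^\infty}$ in two dimensions.
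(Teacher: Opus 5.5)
Your proposal is correct and follows the route the paper indicates. The paper only cites a Schauder fixed-point argument with parabolic regularity (as in Jin--Kim--Wang), or alternatively Amann's theory for triangular normally parabolic systems, and omits all details, so your sketch is more explicit than the source. You also correctly identify the one point that needs care for the extensibility criterion: the local existence time must depend only on $\|u_0\|_{L^\infty}$ and the $W^{1,\infty}$ norms of $v_0,m_0$, which $\|u\|_{L^\infty}$ controls in two dimensions, and not on any norm of $\nabla u_0$.
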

With the local existence of solution in hand, we establish some basic estimates of solutions for the system \eqref{sys-1}.
\begin{lemma}\label{L1}
    Let $(u, v, m)$ be the solution of \eqref{sys-1}  obtained in Lemma \ref{LS}, then we have
    \begin{equation}\label{L1-1}
        0 < \Vert u_{0} \Vert_{L^{1}} \leq \Vert u(\cdot,t) \Vert_{L^{1}} \leq \Vert u_{0} + \alpha m_{0} \Vert_{L^{1}}, \quad \text{for all}\ t \in [0,T_{\max}),
    \end{equation}
    and 
    \begin{equation}\label{L1-2}
        0 \leq m(x, t) \leq \|m_0\|_{L^{\infty}}:=m^*, \quad (x, t) \in \bar{\Omega} \times [0, T_{\max}),
    \end{equation}
    as well as
    \begin{equation}\label{L1-3}
        \Vert m(\cdot,t) \Vert_{L^{\infty}}\ \text{is decreasing in}\ t.
    \end{equation}
\end{lemma}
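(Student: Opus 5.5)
We first establish the sign and bound information for $m$, and then use the conservation structure that couples $u$ and $m$.

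\textbf{Bounds on $m$.} Since $u\ge 0$ on $\bar\Omega\times[0,T_{\max})$ by Lemma \ref{LS}, $m$ solves the linear equation $m_t=\Delta m - c(x,t)m$ with homogeneous Neumann data and nonnegative coefficient $c=u$. The comparison principle with the subsolution $0$ gives $m\ge0$, because $m_0\ge 0$. Likewise $m_t-\Delta m=-um\le 0$, so $m$ is a subsolution of the heat equation. Comparing on $[t_0,t]$ with the constant supersolution $\|m(\cdot,t_0)\|_{L^\infty}$ gives
\begin{equation*}
m(x,t)\le \|m(\cdot,t_0)\|_{L^\infty}\quad \text{for all } t\ge t_0 .
\end{equation*}
This yields \eqref{L1-3}, and taking $t_0=0$ gives \eqref{L1-2}.

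\textbf{$L^1$ bounds on $u$.} Integrate the first and third equations over $\Omega$. The Neumann conditions $\nabla(\gamma u)\cdot\nu=\nabla m\cdot\nu=0$ kill the diffusion terms, so
\begin{equation*}
\frac{d}{dt}\int_\Omega u=\alpha\int_\Omega um\ge0,\qquad \frac{d}{dt}\int_\Omega m=-\int_\Omega um\le 0 .
\end{equation*}
The first identity gives that $\|u(\cdot,t)\|_{L^1}$ is nondecreasing, hence $\|u(\cdot,t)\|_{L^1}\ge\|u_0\|_{L^1}>0$, where positivity uses $u_0\ge0$ and $u_0\not\equiv0$. Adding $\alpha$ times the second identity to the first shows that $\int_\Omega(u+\alpha m)$ is conserved, so
\begin{equation*}
\|u(\cdot,t)\|_{L^1}=\|u_0+\alpha m_0\|_{L^1}-\alpha\|m(\cdot,t)\|_{L^1}\le \|u_0+\alpha m_0\|_{L^1},
\end{equation*}
using $m\ge0$ and $u_0,m_0\ge 0$.

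\textbf{Main obstacle.} The only delicate point is regularity near $t=0$, since the solution is merely continuous up to $t=0$. To handle it, we differentiate the integrals only for $t>0$, where the solution is $C^{2,1}$, and then pass $t\searrow 0$ using continuity in $C(\bar\Omega\times[0,T_{\max}))$. Likewise, the comparison principles are applied on $\bar\Omega\times[t_0,T]$ for $t_0>0$, and we let $t_0\to0$.
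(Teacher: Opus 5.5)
Your proposal is correct and follows essentially the same route as the paper: you integrate the $u$- and $m$-equations to get that $\int_\Omega u$ is nondecreasing and that $\int_\Omega (u+\alpha m)$ is conserved, and you use the comparison principle with a constant supersolution started at an arbitrary time $t_0$ to get the bounds and the monotonicity of $\|m(\cdot,t)\|_{L^\infty}$. The only difference is cosmetic: the paper compares $z=m(\cdot,t+t_0)-\|m(\cdot,t_0)\|_{L^\infty}$ via $z_t-\Delta z+uz\le 0$, whereas you treat $m$ directly as a subsolution of the heat equation, and your explicit handling of regularity near $t=0$ is a small extra that the paper leaves implicit.
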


\begin{proof}
    Integrating the first equation of \eqref{sys-1} and using the non-negativity of $u$ and $m$, we derive 
    \begin{equation}\label{L1-4}
        \frac{d}{dt}\int_\Omega u =\alpha\int_\Omega um,
    \end{equation}
    which gives 
    \begin{equation}\label{L1-5}
        \|u(\cdot,t)\|_{L^1}\geq \|u_0\|_{L^1}>0.
    \end{equation}
    On the other hand, multiplying the third equation of \eqref{sys-1} by $\alpha$, and integrating it by parts and adding it to \eqref{L1-4}, one has
    \begin{equation*}
        \frac{d}{dt}\int_\Omega (u+\alpha m)=0
    \end{equation*}
    and hence 
    \begin{equation}\label{L1-6}
        \|u(\cdot,t)\|_{L^1}\leq \int_\Omega (u+\alpha m)=\int_\Omega (u_0+\alpha m_0).
    \end{equation}
    Then the combination of \eqref{L1-5} and \eqref{L1-6} gives \eqref{L1-1}.

    Applying the parabolic comparison principle and noting the non-negativity of $m$ and $u$, we obtain \eqref{L1-2} directly.

At last,  we shall employ the maximum principle to prove \eqref{L1-3}. For any fixed $t_0\in[0,T_{\max})$ and  $(x,t)\in \Omega \times [0,T_{\max}-t_0)$, we set $$z(x,t)=m(x,t+t_0)-\|m(\cdot,t_0)\|_{L^\infty}.$$ 
Then we can check that $z$ satisfies 
    \begin{equation*}
        \left\{
        \begin{aligned}
            &z_t - \Delta z + uz \leq 0, &&\quad x\in \Omega,\ t\in (0,T_{\max}-t_0),\\
            &\nabla z \cdot \nu =0, &&\quad x\in \partial \Omega, \ t \in (0,T_{\max}-t_0),\\
            &z(x,0)=m(x,t_0)-\|m(\cdot,t_0)\|_{L^\infty}\leq 0,&& \quad x\in\Omega.
        \end{aligned}
        \right.
    \end{equation*}
    Thus, invoking the maximum principle, we deduce that
    \begin{equation}
    m(x,t+t_0) \leq \|m(\cdot,t_0)\|_{L^\infty} \ \ \ \mathrm{for}\ \ (x,t) \in \Omega \times [0,T_{\max}-t_0),
   \end{equation}
   which gives \eqref{L1-3}, due to  the arbitrary of $t_0$.
\end{proof}

The following lemma has been  proven in \cite[Lemma 2.3]{F-2016} and \cite[Lemma 3.3]{Wang-2021-MMAS}.
\begin{lemma}\label{LB}
    Let $\Omega \subset \mathbb{R}^{2}$ be a bounded domain.
    Assume that $f \in C(\bar{\Omega})$ is a non-negative function satisfying $\|f\|_{L^{1}} = \Lambda > 0$. If $z\in C^{2}(\bar{\Omega}) $ is a solution to
    \begin{equation*}
        \begin{cases}
            - \Delta z + z = f, & \quad x \in \Omega, \\
		\frac{\partial z}{\partial \nu} = 0, & \quad x \in \partial \Omega,\\
        \end{cases}
    \end{equation*}
    then one has
    \begin{equation}\label{LB-2}
        z \geq  C_1 \|f\|_{L^1} > 0 \quad \text{in}\ \Omega,
    \end{equation}
    and 
    \begin{equation}\label{LB-3}
        \Vert z \Vert_{L^{q}} \leq C_2 \Vert f \Vert_{L^{1}}, \quad \text{for}\ 1 \leq q <\infty
    \end{equation}
as well as 
 \begin{equation}\label{LB-4}
        \int_{\Omega} e^{\sigma z} d x \leq C_3 e^{C_4 \sigma \Lambda}, \  \ \mathrm{if}\ \  \sigma <\frac{4\pi}{\Lambda},
    \end{equation}
    where $C_i(i=1,2,3,4)$ are positive constants.
\end{lemma}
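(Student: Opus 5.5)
The plan is to use the Neumann Green's function $G$ of $-\Delta+1$ on $\Omega$, so that $z(x)=\int_\Omega G(x,y)f(y)\,dy$. In two dimensions $G$ has a logarithmic singularity on the diagonal:
\begin{equation*}
G(x,y)=\frac{1}{2\pi}\log\frac{1}{|x-y|}+H(x,y)
\end{equation*}
for interior points. Near the boundary, reflection doubles the singularity. Uniformly on $\bar\Omega\times\bar\Omega$ we therefore have $0<c_0\le G(x,y)\le \frac{1}{\pi}\log\frac{1}{|x-y|}+C$ when $|x-y|$ is small.

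\emph{Positivity bound \eqref{LB-2}.} The strong maximum principle together with the Hopf lemma gives $G>0$ on $\bar\Omega\times\bar\Omega$. Since $G\to+\infty$ near the diagonal and is continuous and positive elsewhere, $\inf G=:C_1>0$. Hence $z(x)\ge C_1\int_\Omega f=C_1\|f\|_{L^1}$.

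\emph{$L^q$ bound \eqref{LB-3}.} Apply Minkowski's integral inequality:
\begin{equation*}
\|z\|_{L^q}\le \int_\Omega f(y)\,\|G(\cdot,y)\|_{L^q}\,dy\le \sup_y\|G(\cdot,y)\|_{L^q}\,\|f\|_{L^1}.
\end{equation*}
The supremum is finite for every $q<\infty$ because $\int_\Omega|\log|x-y||^q\,dx$ is bounded uniformly in $y$.

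\emph{Exponential bound \eqref{LB-4}.} This is the main step and follows the Brezis--Merle / Jensen argument. Write $d\mu=f\,dy/\Lambda$, a probability measure, so that $\sigma z(x)=\int_\Omega \sigma\Lambda\, G(x,y)\,d\mu(y)$. By Jensen's inequality,
\begin{equation*}
e^{\sigma z(x)}\le \int_\Omega e^{\sigma\Lambda G(x,y)}\,d\mu(y).
\end{equation*}
Integrating in $x$ and using Fubini,
\begin{equation*}
\int_\Omega e^{\sigma z}\,dx\le \sup_y\int_\Omega e^{\sigma\Lambda G(x,y)}\,dx.
\end{equation*}
Using the kernel bound $G\le \frac1\pi\log\frac1{|x-y|}+C$, the integrand is at most $e^{C\sigma\Lambda}|x-y|^{-\sigma\Lambda/\pi}$. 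This is integrable in two dimensions exactly when $\sigma\Lambda/\pi<2$, that is, when $\sigma<2\pi/\Lambda$.

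The main obstacle is the boundary. There the Neumann reflection doubles the singularity, and the crude bound only yields the threshold $2\pi/\Lambda$ rather than $4\pi/\Lambda$. To reach $4\pi/\Lambda$ I would split $f=f_1+f_2$, with $f_1$ the part supported in a thin boundary layer and $f_2$ the rest, and exploit how the boundary integral behaves. Concretely, for $y$ within distance $\delta$ of $\partial\Omega$, the integral $\int_{\Omega}|x-y|^{-\beta}\,dx$ over a half-ball behaves like the whole-plane one. Flattening the boundary, the reflected kernel is $\frac1{2\pi}\bigl(\log\frac1{|x-y|}+\log\frac1{|x-y^*|}\bigr)$, and since $|x-y^*|\ge|x-y|$ only one factor is singular at full strength. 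This gives integrability of $|x-y|^{-\beta/2}|x-y^*|^{-\beta/2}$ for $\beta<4$, i.e. $\sigma\Lambda<4\pi$, as in \cite{F-2016}. The resulting constants give $\int_\Omega e^{\sigma z}\le C_3e^{C_4\sigma\Lambda}$.
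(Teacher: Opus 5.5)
The genuine gap is your last paragraph, the upgrade of the threshold in \eqref{LB-4} from $2\pi/\Lambda$ to $4\pi/\Lambda$. Your treatment of \eqref{LB-2} and \eqref{LB-3} is fine. The paper cites \cite{F-2016} for the lower bound. It proves \eqref{LB-3} by writing $z=\int_0^\infty e^{-t}e^{t\Delta}f\,dt$ and using $L^1$--$L^q$ smoothing of the Neumann heat semigroup, which is your Minkowski bound in another form, since $G=\int_0^\infty e^{-t}p_t\,dt$. For \eqref{LB-4} the paper gives no argument; it only cites Lemma 3.3 of \cite{Wang-2021-MMAS}.

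Your reflected-kernel bound is not uniform in $y$. If $y$ lies at distance $d$ from the flattened boundary, then $|x-y^*|\le |x-y|+2d\le 2|x-y|$ whenever $|x-y|\ge 2d$. Hence
\[
\int_{\Omega}|x-y|^{-\beta/2}|x-y^*|^{-\beta/2}\,dx\ \ge\ c\int_{2d}^{r}\rho^{1-\beta}\,d\rho\ \longrightarrow\ \infty \qquad (d\to 0)
\]
for every $\beta\ge 2$. So $\sup_y\int_\Omega e^{\sigma\Lambda G(x,y)}\,dx=\infty$ as soon as $\sigma\Lambda\ge 2\pi$, and the right-hand side of your Jensen--Fubini bound is infinite. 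Splitting $f=f_1+f_2$ does not help, because nothing prevents all of the mass of $f$ from lying in the boundary layer.

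This is not a weakness of your method; no argument can close the gap. Lemma \ref{L2B1} uses \eqref{LB-4} with $f=u+\alpha m$ varying in $t$, so $C_3,C_4$ must be independent of $f$. With such constants the inequality is false for $2\pi\le\sigma\Lambda<4\pi$:

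\begin{itemize}
\item Fix $y_0\in\partial\Omega$ and take smooth $f_n\ge 0$ with $\int_\Omega f_n=\Lambda$, supported in $B_{1/n}(y_0)$.
\item Then $z_n\to\Lambda G(\cdot,y_0)$ pointwise on $\bar\Omega\setminus\{y_0\}$.
\item By your own observation that the singularity doubles at the boundary, $G(x,y_0)\ge\frac1\pi\log\frac1{|x-y_0|}-C$ near $y_0$.
\item Fatou's lemma then gives $\liminf_n\int_\Omega e^{\sigma z_n}\ge c\int_{\Omega\cap B_r(y_0)}|x-y_0|^{-\sigma\Lambda/\pi}\,dx=\infty$.
\end{itemize}

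The value $4\pi/\Lambda$ is the Brezis--Merle threshold for Dirichlet data, or for interior concentration. For the Neumann problem on a smooth domain the sharp threshold is $2\pi/\Lambda$. That is exactly what your Jensen argument with the uniform bound $G(x,y)\le\frac1\pi\log\frac1{|x-y|}+C$ already proves, with $C_3$ depending on $\Omega$ and on $2\pi-\sigma\Lambda$. You should stop there and state \eqref{LB-4} under $\sigma<2\pi/\Lambda$. Any later application of \eqref{LB-4} then needs the correspondingly halved mass condition.
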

 
\begin{proof}
    The proof of \eqref{LB-2} can be found in \cite[Lemma 2.3]{F-2016}. Next, using the representation of resolvents and the smoothing estimates for Neumann semigroup, we derive
    \begin{equation*}
        \Vert z \Vert_{L^{q}} \leq c_1 \Vert (I - \Delta)^{- 1} f\Vert_{L^{q}} \leq c_2 \int_{0}^{\infty} e^{- t} \Vert e^{t \Delta} f \Vert_{L^{q}} d t \leq c_3 \left(\int_{0}^{\infty} e^{- t} t^{- 1 + \frac{1}{q}} d t\right) \Vert f \Vert_{L^{1}},
    \end{equation*}
    and then \eqref{LB-3} follows by noting $\int_{0}^{\infty} e^{- t} t^{- 1 +\frac{1}{q}} d t\leq c_4$ due to $1 \leq q <+\infty$. The estimate \eqref{LB-4} is proved in \cite[Lemma 3.3]{Wang-2021-MMAS}. Hence, we complete the proof of Lemma \ref{LB}.
\end{proof}

At last, we state a basic lemma will be used to establish the boundedness of solutions.
\begin{lemma}[\cite{LW-2015-CPDE}]\label{LB3}
    Let $T > 0$, $\tau \in (0, T)$, $a > 0$ and $b > 0$. Suppose that $y : [0, T) \rightarrow [0, \infty)$ is absolutely continuous and fulfills
    \begin{equation*}
        y'(t) + a y(t) \leq h(t), \quad \text{for all} \ t \in (0, T),
    \end{equation*}
    with some non-negative functions $h \in L_{loc}^{1}([0, T))$ satisfying
    \begin{equation*}
        \int_{t}^{t + \tau} h(s) ds \leq b, \quad \text{for all} \ t \in [0, T - \tau).
    \end{equation*}
    Then
    \begin{equation*}
        y (t) \leq \max \left\{ y (0) + b, \frac{b}{a \tau} + 2 b \right\}, \quad \text{for all} \ t \in (0, T).
    \end{equation*}
\end{lemma}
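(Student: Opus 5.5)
The plan is a standard ODE-comparison argument on consecutive intervals of length $\tau$, using the variation-of-constants formula. Set $C_0:=\max\{y(0)+b,\ \frac{b}{a\tau}+2b\}$. Multiplying the inequality by $e^{at}$ and integrating gives, for any $0\le t_0\le t<T$,
\begin{equation*}
y(t)\le y(t_0)e^{-a(t-t_0)}+\int_{t_0}^{t}e^{-a(t-s)}h(s)\,ds .
\end{equation*}
Since $y$ is only absolutely continuous, this step is justified by noting that $(e^{at}y)'=e^{at}(y'+ay)\le e^{at}h$ almost everywhere, and then integrating.

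\textbf{Short times.} First we treat $t\in(0,\min\{\tau,T\})$. Here we take $t_0=0$ and drop the exponential factors, which are at most $1$. Since $h\ge0$, this yields
\begin{equation*}
y(t)\le y(0)+\int_0^{\tau}h\le y(0)+b\le C_0 .
\end{equation*}

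\textbf{Long times.} For $t\ge\tau$ we use only the integral term and handle the initial part by a splitting. Take $t_0=0$ again and cut $[0,t]$ into the pieces $[t-(k+1)\tau,\,t-k\tau]$ for $k=0,1,\dots$, with the last piece truncated at $0$. On the $k$-th piece the weight satisfies $e^{-a(t-s)}\le e^{-ak\tau}$, and the integral of $h$ over it is at most $b$ by hypothesis. When a piece is truncated, the hypothesis is applied to a full window contained in $[0,T-\tau)$, which is possible because $t\ge\tau$; nonnegativity of $h$ then lets us pass from the truncated piece to the full window. The initial term is bounded by $y(0)e^{-at}\le y(0)$. Summing over $k$ gives
\begin{equation*}
y(t)\le y(0)+b\sum_{k\ge0}e^{-ak\tau}=y(0)+\frac{b}{1-e^{-a\tau}} .
\end{equation*}

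\textbf{Main obstacle.} The crude bound above contains $y(0)$ and $\frac{b}{1-e^{-a\tau}}$, whereas the target bound is a maximum. To match it I would use $\frac{1}{1-e^{-x}}\le 1+\frac1x$ and get rid of $y(0)$ by a self-improving step on each window. Let $t\ge\tau$ and suppose $y(s)>\frac{b}{a\tau}+b$ for every $s\in[t-\tau,t]$. Integrating $y'\le h-ay$ over that window gives
\begin{equation*}
y(t)-y(t-\tau)\le b-a\tau\Bigl(\frac{b}{a\tau}+b\Bigr)<0 .
\end{equation*}
So on each window, either $y$ decreases across it, or $y(s_0)\le\frac{b}{a\tau}+b$ at some point $s_0$ of it. In the second case, integrating from $s_0$ to $t$ adds at most $b$, so $y(t)\le\frac{b}{a\tau}+2b$. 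In the first case we step back by $\tau$ and repeat. Each step back either lands in the second case or eventually reaches the initial interval $[0,\tau)$, where the short-time bound $y\le y(0)+b$ holds. Together these give $y(t)\le C_0$ for every $t$, which is the claimed bound.

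Care is needed when $T$ is not a multiple of $\tau$ and near the endpoint $T-\tau$. I expect this window-dichotomy argument, and not the summation, to be the step that needs the most attention.
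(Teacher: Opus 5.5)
The paper does not prove this lemma; it only quotes it from the literature. So there is no in-paper argument to compare against, and judged on its own merits your proof is correct.

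The part that actually proves the claim is the window dichotomy. For $\tau\le t<T$ the window $[t-\tau,t]$ has its left endpoint in $[0,T-\tau)$, so the hypothesis applies to it. There are two cases:
\begin{itemize}
\item If $y>\frac{b}{a\tau}+b$ on the whole window, integrating the differential inequality gives $y(t)-y(t-\tau)\le b-a\tau\bigl(\frac{b}{a\tau}+b\bigr)=-a\tau b<0$.
\item Otherwise, $y'\le h-ay\le h$ (this uses $y\ge 0$ and $h\ge 0$) gives $y(t)\le \frac{b}{a\tau}+2b$.
\end{itemize}
It is cleanest to write the descent as a finite chain $y(t)<y(t-\tau)<\dots<y(t-j\tau)$. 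The chain stops either at an index where the second case holds, or at the first $j\le\lfloor t/\tau\rfloor$ with $t-j\tau<\tau$, where your short-time bound $y\le y(0)+b$ applies. Either way you get exactly the stated bound.

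Two small remarks:
\begin{itemize}
\item Your worry about $T$ not being a multiple of $\tau$, or about the endpoint $T-\tau$, is unfounded, for the reason just given.
\item The ``Long times'' summation can be deleted, since its conclusion is discarded anyway.
\end{itemize}

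If you prefer to stay with the variation-of-constants route, the ``main obstacle'' disappears if you do not unroll the formula all the way down to $0$. Apply it on a single window only, which gives $y(t)\le e^{-a\tau}y(t-\tau)+b$ for $t\ge\tau$. Then observe that $[0,\max\{y(0)+b,L\}]$, with $L=\frac{b}{1-e^{-a\tau}}$, is invariant under $x\mapsto e^{-a\tau}x+b$. Induction on $\lfloor t/\tau\rfloor$ then yields $y\le\max\{y(0)+b,\frac{b}{1-e^{-a\tau}}\}\le\max\{y(0)+b,\frac{b}{a\tau}+b\}$. This is a slightly sharper bound, obtained without the dichotomy.
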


\section{Proof of Theorem \ref{GBS-1}: Boundedness and stabilization with $\theta_1=0$}
In this section, we shall study the global boundedness and global stabilization for the system \eqref{sys-1} with $\theta_1=0$ under the assumption that $\gamma(v,m)$ satisfies (H1). From Lemma \ref{LS}, we obtain $v \geq 0$, which combined with the assumption (H1) of $\gamma$ and \eqref{L1-2}, gives a positive constant $\gamma_1>0$ such that
\begin{equation}\label{Bg1}
    0 < \gamma(v, m) \leq \gamma_1.
\end{equation}
However, the condition $0 < \gamma(v, m)$ does not exclude the possible of degeneracy for the diffusion of $u$. Noting that $\gamma_v(v,m)\leq 0$ and hence to prevent degeneracy, the key point is to prove that $v$ admits an upper bound. In fact, if $\theta_1=0$, the following result provides the lower and upper bound for $v$.
\begin{lemma}\label{Bv1}
    If $\theta_1=0$, then the solution $(u,v,m)$ of \eqref{sys-1} obtained in Lemma \ref{LS} satisfies
    \begin{equation}\label{Bv1-1}
        0 \leq v(x, t) \leq C_1, \quad (x, t) \in \bar{\Omega} \times [0, T_{\max}),
    \end{equation}
    where $C_1>0$ is a constant independent of $t$.
\end{lemma}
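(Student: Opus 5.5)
With $\theta_1=0$ the $v$-equation reads $v_t=\Delta v - v + \theta_2 m u$. The plan is to use the $m$-equation to absorb the source term $\theta_2 m u$. Since $m_t=\Delta m - um$, we have $um = \Delta m - m_t$. I therefore set
\[
w := v + \theta_2 m ,
\]
which satisfies
\[
w_t = \Delta v - v + \theta_2(\Delta m - m_t) + \theta_2 m_t = \Delta w - v ,
\]
that is,
\[
w_t - \Delta w + w = \theta_2 m \quad \text{in } \Omega\times(0,T_{\max}),
\]
with homogeneous Neumann boundary conditions. Both boundary conditions are available from \eqref{sys-1}, and no information about $u$ is needed.

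\textbf{Nonnegativity.} The lower bound $v\ge 0$ follows from the parabolic comparison principle applied to the $v$-equation, using $v_0>0$, $u\ge0$ and $m\ge0$ (Lemmas \ref{LS} and \ref{L1}). The same argument applied to $w$ gives $w\geq 0$.

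\textbf{Upper bound.} By \eqref{L1-2} the right-hand side of the $w$-equation satisfies $0\le \theta_2 m\le \theta_2 m^*$. The constant function
\[
\bar w := \max\{\|v_0\|_{L^\infty}+\theta_2 m^*,\ \theta_2 m^*\}
\]
is therefore a supersolution: $\bar w_t-\Delta\bar w+\bar w=\bar w\ge \theta_2 m$, it has zero normal derivative, and $\bar w\ge w(\cdot,0)$. The comparison principle then gives
\[
v \le v+\theta_2 m = w \le \|v_0\|_{L^\infty}+\theta_2 m^* =: C_1 ,
\]
where the first inequality uses $m\ge0$. This constant is independent of $t$ and of $T_{\max}$.

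The only delicate point is justifying the comparison principle for $w$. At $t=0$ the function $w$ is only continuous, but $w\in C^{2,1}(\bar\Omega\times(0,T_{\max}))\cap C(\bar\Omega\times[0,T_{\max}))$ and the equation is linear with a bounded forcing term, so the standard weak maximum principle applies. An alternative is the variation-of-constants representation
\[
w(t)=e^{t(\Delta-1)}w_0+\theta_2\int_0^t e^{(t-s)(\Delta-1)}m(s)\,ds .
\]
Since the Neumann heat semigroup is order-preserving and $L^\infty$-contractive, this gives directly
\[
\|w(t)\|_{L^\infty}\le \|w_0\|_{L^\infty}+\theta_2 m^*\int_0^t e^{-(t-s)}\,ds\le C_1 .
\]
I expect no real obstacle beyond recognizing the combination $v+\theta_2 m$.
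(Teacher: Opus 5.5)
Your proof is correct and follows the same route as the paper. The paper also passes to $z=v+\theta_2 m$, derives $z_t-\Delta z+z=\theta_2 m$ with homogeneous Neumann data, and bounds $z$ (hence $v$, using $m\ge 0$) uniformly in time. It closes with the variation-of-constants formula and semigroup estimates rather than your constant supersolution, and both endings work.

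In your semigroup variant, keep the factor $e^{-t}$ on $\|w_0\|_{L^\infty}$; this gives $\max\{\|w_0\|_{L^\infty},\theta_2 m^*\}\le C_1$. Without that factor you only get $\|v_0\|_{L^\infty}+2\theta_2 m^*$ rather than $C_1$, which is still a valid time-independent bound.
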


\begin{proof}
    The fact $v \geq 0$ has been proved by Lemma \ref{LS}. We only need to show the upper bound of $v$.
    Multiplying the third equation of \eqref{sys-1} by $\theta_{2}$ and adding the result to the second equation of \eqref{sys-1}, one has
    \begin{equation*}
        (v + \theta_{2} m)_{t} = \Delta (v + \theta_{2} m) - v, \quad (x, t) \in \Omega \times (0, T_{\max}).
    \end{equation*}
    Letting $z= v+ \theta_{2} m$, we deduce that $z$ satisfies the following system 
    \begin{equation}\label{Bv1-2}
        \begin{cases}
            z_{t} - \Delta z + z = \theta_{2} m, \quad & x \in \Omega, \ t \in (0,T_{\max}), \\
            \nabla z \cdot \nu = 0, \quad & x \in \partial\Omega, \ t \in (0,T_{\max}), \\
            z(x, 0) = (v + \theta_{2} m) (x, 0) = v_{0}(x) + \theta_{2} m_{0}(x), \quad & x \in \Omega.
        \end{cases}
    \end{equation}
    Then applying the variation-of-constants formula, from \eqref{Bv1-2} we obtain 
    \begin{equation*}
        z(x, t)=e^{(\Delta - 1) t} z_{0}(x)+\int_{0}^{t} e^{(\Delta - 1) (t- s)} \theta_{2} m(x, s) d s,
    \end{equation*}
    which, together with the well-known semigroup estimate (\cite{Winkler-2010-JDE}) and the fact \eqref{L1-2}, gives
    \begin{equation}\label{esz}
        \begin{split}
            \|z(\cdot, t)\|_{L^{\infty}} & \leq \|e^{(\Delta - 1) t} z_{0}\|_{L^{\infty}} + \int_{0}^{t} \|e^{(\Delta - 1) (t- s)} \theta_{2} m(\cdot, s)\|_{L^{\infty}} d s \\
            & \leq k_{1} e^{- \lambda_{1} t} \|z_{0}\|_{L^{\infty}} + \theta_{2} k_{1} \|m_{0}\|_{L^{\infty}} \int_{0}^{t} e^{- \lambda_{1} (t - s)} d s \\
            & \leq k_{1} \left(1 + \frac{\theta_{2}}{\lambda_{1}}\right) \|z_{0}\|_{L^{\infty}}, \quad t \in [0, T_{\max}),
        \end{split}
    \end{equation}
    where $k_{1}$ is a positive constant depending only on $\Omega$ and $\lambda_{1} > 0$ is the first nonzero eigenvalue of $- \Delta$ in $\Omega$.
    Therefore, we derive from \eqref{esz} that
    \begin{equation*}
        \|v+\theta_2 m\|_{L^\infty}\leq c_1,
    \end{equation*}
which, combined with the fact $\theta_{2} > 0$ and the non-negativity of $m$, implies $\|v(\cdot, t)\|_{L^{\infty}} \leq c_2$ for $t\in[0,T_{\max})$. Hence, we complete the proof of Lemma \ref{Bv1}.
\end{proof}

\subsection{Global boundedness: $\theta_1=0$}
In this subsection, we shall prove the global boundedness of solution for the system \eqref{sys-1} with $\theta_1=0$. Under the assumption (H1) and the facts \eqref{Bv1-1} and \eqref{L1-2}, it holds that
\begin{equation}\label{Bg1-1}
    0 < \gamma_{0} \leq \gamma (v, m) \leq \gamma_{1}, \quad |\gamma_{v} (v, m)| \leq \gamma_{2}, \quad |\gamma_{m} (v, m)| \leq \gamma_{3}.
\end{equation}
    
\begin{lemma}\label{Lut}
    Let $(u,v,w)$ be the solution obtained in Lemma \ref{LS}. 
    Then there exists a positive constant $C_2>0$ independent of $t$ such that
    \begin{equation}\label{Lut-1}
        \int_{t}^{t + \tau} \int_{\Omega} u^{2} \leq C_2, \quad \text{for all} \ t\in (0,\widetilde{T}_{\max}),
    \end{equation}
    where
    \begin{equation}\label{DT}
        \tau := \min \left\{1, \frac{1}{2} T_{\max}\right\} \quad \text{and} \quad \widetilde{T}_{\max} := T_{\max} - \tau.
    \end{equation}
\end{lemma}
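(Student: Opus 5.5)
We would prove the space-time $L^2$ bound \eqref{Lut-1} by the standard duality argument of Tao--Winkler type, which exploits the nondegenerate bounds \eqref{Bg1-1}. Denote by $\bar u(t)=\frac{1}{|\Omega|}\int_\Omega u$, which is bounded above by $\|u_0+\alpha m_0\|_{L^1}/|\Omega|$ thanks to Lemma \ref{L1}. Since the first equation is not mass-conserving, we define $w$ as the solution of
\begin{equation*}
-\Delta w + w = u, \qquad \partial_\nu w=0 \ \text{on } \partial\Omega,
\end{equation*}
so that $w=(I-\Delta)^{-1}u$. Then $w_t - \Delta w_t = u_t$, and the first equation becomes
\begin{equation*}
w_t-\Delta w_t = \Delta(\gamma u)+\alpha u m .
\end{equation*}
Applying $(I-\Delta)^{-1}$ and using $\Delta (I-\Delta)^{-1} = (I-\Delta)^{-1}-I$, we obtain the key identity
\begin{equation*}
w_t + \gamma u = (I-\Delta)^{-1}\big[\gamma u + \alpha u m\big].
\end{equation*}

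We then test this identity with $u=-\Delta w+w$. This gives
\begin{equation*}
\frac12\frac{d}{dt}\int_\Omega\big(|\nabla w|^2+w^2\big) + \int_\Omega \gamma u^2 = \int_\Omega u\,(I-\Delta)^{-1}[\gamma u+\alpha u m].
\end{equation*}
By \eqref{Bg1-1}, the left-hand side controls $\gamma_0\int_\Omega u^2$. On the right-hand side, the self-adjointness and positivity of $(I-\Delta)^{-1}$, together with $\gamma\le\gamma_1$ and $m\le m^*$, bound the integral by
\begin{equation*}
(\gamma_1+\alpha m^*)\int_\Omega u\,(I-\Delta)^{-1}u = (\gamma_1+\alpha m^*)\int_\Omega\big(|\nabla w|^2+w^2\big).
\end{equation*}
Here we have used that $u\ge0$ and that $(I-\Delta)^{-1}$ is order preserving, so $(I-\Delta)^{-1}[(\gamma+\alpha m)u]\le(\gamma_1+\alpha m^*)(I-\Delta)^{-1}u$ pointwise.

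The main obstacle is that this right-hand side is not small relative to the dissipation. We would try two ways around it. First, set $y=\int_\Omega(|\nabla w|^2+w^2)=\int_\Omega uw$ and bound it via Lemma \ref{LB}: by \eqref{LB-3} with $q=2$, $\|w\|_{L^2}\le C\|u\|_{L^1}\le C$, hence
\begin{equation*}
y\le\|u\|_{L^2}\|w\|_{L^2}\le C\|u\|_{L^2}\le \frac{\gamma_0}{4(\gamma_1+\alpha m^*)}\|u\|_{L^2}^2+C'.
\end{equation*}
Inserting this into the energy identity yields
\begin{equation*}
y'+\frac{\gamma_0}{2}\int_\Omega u^2\le C.
\end{equation*}
Second, since $y\le C\|u\|_{L^2}$ implies $\int_\Omega u^2 \ge y^2/C^2$, and more simply $y\le \varepsilon\int_\Omega u^2 + C_\varepsilon$, we may absorb once more to get
\begin{equation*}
y'+ a y + \frac{\gamma_0}{4}\int_\Omega u^2 \le C.
\end{equation*}

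To conclude, Lemma \ref{LB3} (with constant $h$) gives $y(t)\le C$ uniformly in $t\in(0,T_{\max})$, where $y(0)$ is controlled by $u_0\in L^\infty$. Integrating the last differential inequality over $(t,t+\tau)$ then gives
\begin{equation*}
\frac{\gamma_0}{4}\int_t^{t+\tau}\int_\Omega u^2\le y(t)+C\tau\le C,
\end{equation*}
which is \eqref{Lut-1}. The only delicate points are the rigorous justification of the identity, which requires enough regularity of $w_t$ and can be handled via classical solutions for $t>0$, and the positivity/self-adjointness step used to handle $\alpha um$.
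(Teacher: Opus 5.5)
Your argument is correct, but it is not the route the paper takes for this lemma.

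\textbf{The paper's route.} The paper works with the conserved quantity $u+\alpha m$. Since $(u+\alpha m)_t=\Delta(\gamma u+\alpha m)$ has constant mean, it tests with $\mathcal{B}^{-1}(u+\alpha m-\bar u-\alpha\bar m)$, where $\mathcal{B}$ is the Neumann Laplacian on mean-zero functions. After expanding the product, the dissipation is $\int_\Omega\gamma(u-\bar u)^2+\alpha^2\int_\Omega(m-\bar m)^2$. All cross terms are bounded by a constant using only the $L^1$ bounds of Lemma \ref{L1} and $m\le m^*$. The damping term for $X=\|\mathcal{B}^{-1/2}(\cdots)\|_{L^2}^2$ then comes from the Poincar\'e inequality.

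\textbf{Your route.} You apply $(I-\Delta)^{-1}$ to $u$ alone. This avoids needing the conserved combination, because the source $\alpha um$ is handled by positivity and self-adjointness of the resolvent. The cost is a right-hand side $(\gamma_1+\alpha m^*)\int_\Omega uw$ that is not a priori bounded. You must absorb it, twice, using the $L^1\to L^2$ bound \eqref{LB-3} for $w$.

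\textbf{What each buys.} The paper's version is dimension-free: it uses only \eqref{Bg1-1}, mass bounds and Poincar\'e. Yours needs $(I-\Delta)^{-1}:L^1\to L^2$, hence $n\le 3$, which is harmless here since $n=2$. On the other hand, your computation is exactly the nondegenerate special case of what the paper does later in Lemma \ref{L2B1}: identity \eqref{IB-2}, then \eqref{L2B1-5}--\eqref{L2B1-7}, where $\int_\Omega w^2/\gamma\le\gamma_0^{-1}\|w\|_{L^2}^2$ replaces your Cauchy--Schwarz/Young absorption. So your route unifies the $\theta_1=0$ and $\theta_1>0$ analyses, and it yields a uniform bound on $\|w\|_{H^1}$ as a by-product.
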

    
\begin{proof}
    Let $\mathcal{B}$ be a self-adjoint realization of $- \Delta$ defined on
    \begin{equation*}
        D(\mathcal{B}) := \left\{ \phi \in W^{2,2} (\Omega) \cap L^{2} (\Omega) \Big| \int_{\Omega} \phi = 0  \ \text{and} \  \frac{\partial\phi}{\partial\nu} = 0 \ \text{on} \ \partial \Omega \right\}.
    \end{equation*}
    Let $\bar{f}(t) = \frac{1}{| \Omega |} \int_{\Omega} f(x, t) dx$, we infer from \eqref{sys-1} that
    \begin{equation}\label{Lut-2}
        (u + \alpha m - \bar{u} - \alpha \bar{m})_{t} = - \mathcal{B} \left(u \gamma (v, m) + \alpha m - \overline{u \gamma (v, m)} - \alpha \bar{m}\right).
    \end{equation}
    Multiplying \eqref{Lut-2} by $\mathcal{B}^{-1} (u + \alpha m - \bar{u} - \alpha \bar{m})$ and integrating the result by parts, we obtain
    \begin{equation*}
        \begin{split}
            \frac{1}{2} & \frac{d}{dt} \int_{\Omega} | \mathcal{B}^{ - \frac{1}{2} } (u + \alpha m - \bar{u} - \alpha \bar{m}) |^{2} \\
            = & - \int_{\Omega} \mathcal{B} \left(u \gamma (v, m) + \alpha m - \overline{u \gamma (v, m)} - \alpha \bar{m}\right) \mathcal{B}^{-1} (u + \alpha m - \bar{u} - \alpha \bar{m}) \\
            = & - \int_{\Omega} \left(u \gamma (v, m) + \alpha m - \overline{u \gamma (v, m)} - \alpha \bar{m}\right) (u + \alpha m - \bar{u} - \alpha \bar{m}) \\
            = & - \int_{\Omega} \gamma (v, m) (u - \bar{u})^{2} - \alpha^{2} \int_{\Omega} (m - \bar{m})^{2} - \alpha \int_{\Omega} [1 + \gamma (v, m)] (u - \bar{u}) (m - \bar{m}) \\
            & - \bar{u} \int_{\Omega} \gamma (v, m) (u - \bar{u}) - \alpha \bar{u} \int_{\Omega} \gamma (v, m) (m - \bar{m}),
        \end{split}
    \end{equation*}
    which, together with the facts \eqref{Bg1-1}, the non-negativity of $u$ and $m$ and Lemma \ref{L1}, gives
    \begin{equation}\label{Lut-3}
        \begin{split}
            & \frac{1}{2} \frac{d}{dt} \int_{\Omega} | \mathcal{B}^{ - \frac{1}{2} } (u + \alpha m - \bar{u} - \alpha \bar{m})|^{2} + \gamma_{0} \int_{\Omega} (u - \bar{u})^{2} + \alpha^{2} \int_{\Omega} (m - \bar{m})^{2} \\
            & \leq - \alpha \int_{\Omega} [1 + \gamma (v, m)] (u - \bar{u}) (m - \bar{m}) - \bar{u} \int_{\Omega} \gamma (v, m) (u - \bar{u}) - \alpha \bar{u} \int_{\Omega} \gamma (v, m) (m - \bar{m}) \\
            & \leq \alpha \bar{u} \int_{\Omega} [1 + \gamma (v, m)] m + \alpha \bar{m} \int_{\Omega} [1 + \gamma (v, m)] u + (\bar{u}^{2} + \alpha \bar{u} \bar{m}) \int_{\Omega} \gamma (v, m) \\
            & \leq \frac{2 \alpha + 3 \alpha \gamma_{1}}{|\Omega|} \|u\|_{L^{1}} \|m\|_{L^{1}} + \frac{\gamma_{1}}{|\Omega|} \|u\|_{L^{1}}^{2}\\
            &\leq c_1.
        \end{split}
    \end{equation}
    By observing $\int_{\Omega} \mathcal{B}^{-\frac{1}{2}} (u + \alpha m - \bar{u} - \alpha \bar{m}) = 0$ and invoking the Poincar$\acute{\text{e}}$ inequality and the property $\|m(\cdot,t)\|_{L^{\infty}} \leq \|m_{0}\|_{L^{\infty}}$, we derive
    \begin{equation}\label{Lut-5}
        \begin{split}
            \int_{\Omega} |\mathcal{B}^{- \frac{1}{2}} (u + \alpha m - \bar{u} - \alpha \bar{m})|^{2} 
            & \leq c_{2} \int_{\Omega} |\nabla \mathcal{B}^{- \frac{1}{2}} (u + \alpha m - \bar{u} - \alpha \bar{m}) |^{2} \\
            & = c_{2} \int_{\Omega} |u + \alpha m - \bar{u} - \alpha \bar{m}|^{2} \\
            & \leq 2 c_{2} \int_{\Omega} (u - \bar{u})^{2} + 2 \alpha^{2} c_{2} |\Omega| \|m_{0}\|_{L^{\infty}}^{2}.
        \end{split}
    \end{equation}
    Multiplying \eqref{Lut-5} by $\frac{\gamma_0}{2c_2}$ and adding it into \eqref{Lut-3}, one has 
    \begin{equation}\label{Lut-6}
        X^{'} (t) + \frac{\gamma_{0}}{2 c_{2}} X (t) + \gamma_{0} \int_{\Omega} (u - \bar{u})^{2} \leq c_3,
    \end{equation}
    where $$X (t) = \int_{\Omega} |\mathcal{B}^{- \frac{1}{2}} (u + \alpha m - \bar{u} - \alpha \bar{m})|^{2}.$$
    Then applying the Gr\"{o}nwall's inequality to \eqref{Lut-6}, we first establish
    \begin{equation}\label{Lut-6*}
        X(t) = \int_{\Omega} |\mathcal{B}^{- \frac{1}{2}} (u + \alpha m - \bar{u} - \alpha \bar{m})|^{2} \leq c_4.
    \end{equation}
    Therefore, integrating \eqref{Lut-6} over $(t, t + \tau)$ with $\tau := \min \left\{1, \frac{1}{2} T_{\max}\right\}$ and employing \eqref{Lut-6*}, it follows that
    \begin{equation*}
        \int_{t}^{t + \tau} \int_{\Omega} (u - \bar{u})^{2} \leq \frac{c_{3} \tau + c_{4}}{\gamma_{0}} \leq \frac{c_{3} + c_{4}}{\gamma_{0}},
        \end{equation*}
    which, combined with $\int_{\Omega} (u - \bar{u})^{2} = \int_{\Omega} u^{2} - \int_{\Omega} \bar{u}^{2}$, yields
    \begin{equation*}
        \int_{t}^{t + \tau} \int_{\Omega} u^{2} =  \int_{t}^{t + \tau} \int_{\Omega} (u - \bar{u})^{2} + \int_{t}^{t + \tau} \int_{\Omega} \bar{u}^{2} \leq\frac{c_{3} + c_{4}}{\gamma_{0}} + \bar{u}^{2} |\Omega| \tau,
    \end{equation*}
    and hence \eqref{Lut-1} holds by virtue of $\bar{u} \leq \|u_{0}\|_{L^{\infty}} + \alpha \|m_{0}\|_{L^{\infty}}$.
\end{proof}
    
\begin{lemma}\label{Lvm}
    Let the conditions in Lemma \ref{Lut} hold. Then the solution $(u,v,m)$ of system \eqref{sys-1} with $\theta_1=0$ satisfies 
    \begin{equation}\label{Lvm-1}
        \int_{\Omega} |\nabla v|^{2}+\int_\Omega |\nabla m|^2 \leq C_3, \quad \text{for all} \ t \in (0, T_{\max}),
    \end{equation}
    and 
    \begin{equation}\label{Lvm-2}
        \int_{t}^{t + \tau} \int_{\Omega} |\Delta v|^{2} +\int_{t}^{t + \tau} \int_{\Omega} |\Delta m|^{2}\leq C_4, \quad \text{for all} \ t \in (0, \tilde{T}_{\max}),
    \end{equation}
    where $C_3$ and $C_4$ are positive constants independent of $t$.
\end{lemma}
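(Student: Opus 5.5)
We would derive both estimates from the standard energy identity for the $H^1$-seminorm of the two parabolic components. For $v$ (with $\theta_1=0$) we test the second equation of \eqref{sys-1} against $-\Delta v$ and integrate by parts, using the Neumann condition. This gives
\begin{equation*}
\frac12\frac{d}{dt}\int_\Omega|\nabla v|^2+\int_\Omega|\Delta v|^2+\int_\Omega|\nabla v|^2=-\theta_2\int_\Omega mu\,\Delta v .
\end{equation*}
By Young's inequality and \eqref{L1-2}, the right-hand side is at most $\frac12\int_\Omega|\Delta v|^2+\frac{\theta_2^2(m^*)^2}{2}\int_\Omega u^2$.

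For $m$ we test the third equation against $-\Delta m$ and obtain
\begin{equation*}
\frac12\frac{d}{dt}\int_\Omega|\nabla m|^2+\int_\Omega|\Delta m|^2=\int_\Omega um\,\Delta m\le\frac12\int_\Omega|\Delta m|^2+\frac{(m^*)^2}{2}\int_\Omega u^2 .
\end{equation*}
The $m$-equation has no damping term, so we add one artificially. Since $\int_\Omega\nabla m\cdot\nabla m=-\int_\Omega m\Delta m\le m^*|\Omega|^{1/2}\|\Delta m\|_{L^2}$, we have $\int_\Omega|\nabla m|^2\le\frac14\int_\Omega|\Delta m|^2+c$, using the $L^\infty$ bound on $m$ from Lemma \ref{L1}. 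Adding $\int_\Omega|\nabla m|^2$ to both sides and absorbing with this inequality leaves a positive multiple of $\int_\Omega|\Delta m|^2$ on the left.

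Summing the two inequalities, we set $y(t)=\int_\Omega|\nabla v|^2+\int_\Omega|\nabla m|^2$ and arrive at
\begin{equation*}
y'(t)+a\,y(t)+\frac12\int_\Omega\big(|\Delta v|^2+|\Delta m|^2\big)\le h(t):=c_1\int_\Omega u^2+c_2,
\end{equation*}
with constants $a,c_1,c_2>0$ independent of $t$. Lemma \ref{Lut} gives $\int_t^{t+\tau}h\le c_1C_2+c_2$ for $t<\widetilde T_{\max}$, so Lemma \ref{LB3} yields a uniform bound for $y(t)$ on $(0,T_{\max})$. Here $y(0)$ is finite because $v_0,m_0\in W^{1,\infty}(\Omega)$. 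This gives \eqref{Lvm-1}.

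To get \eqref{Lvm-2}, we integrate the same differential inequality over $(t,t+\tau)$. This bounds $\frac12\int_t^{t+\tau}\int_\Omega(|\Delta v|^2+|\Delta m|^2)$ by $y(t)+\int_t^{t+\tau}h$, and both terms are uniformly bounded by the previous step.

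The main technical point is the missing zero-order damping in the $m$-equation. The interpolation step above handles it, relying on the $L^\infty$ bound for $m$ from Lemma \ref{L1}.
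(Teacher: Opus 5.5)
Your proof is correct and is essentially the paper's. The $v$-estimate is identical, and the paper's rewriting of the third equation as $m_t=\Delta m-m+(1-u)m$ produces exactly your artificial damping term $\int_\Omega|\nabla m|^2$, with the extra forcing $-\int_\Omega m\Delta m$ controlled by Young's inequality and $m\le m^*$. The only difference is cosmetic: the paper applies Lemma \ref{LB3} to the $v$- and $m$-inequalities separately rather than to their sum.
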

    
\begin{proof}
    We multiply the second equation of \eqref{sys-1} by $- \Delta v$ and integrate the result by parts to obtain 
    \begin{equation*}
        \begin{split}
            \frac{1}{2} \frac{d}{dt} \int_{\Omega} |\nabla v|^{2} & = - \int_{\Omega} |\Delta v|^{2} + \int_{\Omega} v \Delta v - \theta_{2} \int_{\Omega} u m \Delta v \\
            & \leq - \frac{1}{2} \int_{\Omega} |\Delta v|^{2} - \int_{\Omega} |\nabla v|^{2} + \frac{\theta_{2}^2 \|m_0\|_{L^\infty}^2}{2} \int_{\Omega} u^{2},
        \end{split}
    \end{equation*}
    which leads to 
    \begin{equation}\label{Lvm-3}
        \frac{d}{dt} \int_{\Omega} |\nabla v|^{2} +2 \int_{\Omega} |\nabla v|^{2}+ \int_{\Omega} |\Delta v|^{2} \leq \theta_{2}^2 \|m_0\|_{L^\infty}^2 \int_{\Omega} u^{2}.
    \end{equation}
    Then applying Lemma \ref{LB3} and \eqref{Lut-1}, we derive from \eqref{Lvm-3} that
    \begin{equation}\label{Lvm-4}
        \int_{\Omega} |\nabla v|^{2}\leq c_1.
    \end{equation}
    On the other hand, integrating \eqref{Lvm-3} over $(t, t + \tau)$ for $t \in (0, \tilde{T}_{\max})$ and utilizing \eqref{Lvm-4}, we deduce that
    \begin{equation}\label{Lvm-5}
        \int_{t}^{t + \tau} \int_{\Omega} |\Delta v|^{2} \leq \theta_{2}^2 \|m_{0}\|_{L^{\infty}}^{2} \int_{t}^{t + \tau} \int_{\Omega} u^{2} + \int_{\Omega} |\nabla v|^{2}\leq c_2.
    \end{equation}
    Similarly, we rewrite the third equation of \eqref{sys-1} as follows
    \begin{equation}\label{Lvm-6}
        m_{t} = \Delta m - m + (1 - u) m.
    \end{equation}
    Thus, multiplying \eqref{Lvm-6} by $-\Delta m$ and integrating it by parts, we end up with 
    \begin{equation*}
        \begin{split}
            \frac{1}{2}\frac{d}{dt} \int_\Omega |\nabla m|^2+\int_\Omega |\nabla m|^2 + \int_\Omega |\Delta m|^2
           & = - \int_\Omega (1-u) m\Delta m\\
           &\leq \frac{1}{2}\int_\Omega |\Delta m|^2 + \|m_0\|^2_{L^\infty} |\Omega| + \|m_0\|_{L^\infty}^2\int_\Omega u^2,
        \end{split}
    \end{equation*}
    which yields
    \begin{equation}\label{Lvm-7}
        \begin{split}
            \frac{d}{dt}\int_\Omega |\nabla m|^2 + 2 \int_\Omega |\nabla m|^2 + \int_\Omega |\Delta m|^2
            &\leq 2\|m_0\|^2_{L^\infty} |\Omega| + 2 \|m_0\|_{L^\infty}^2 \int_\Omega u^2.
        \end{split}
    \end{equation}
    Therefore, employing Lemma \ref{LB3} and \eqref{Lut-1} again, from \eqref{Lvm-7} we can find two positive constants $c_3$ and $c_4$ such that 
    \begin{equation}\label{Lvm-8}
        \int_\Omega |\nabla m|^2 \leq c_3
    \end{equation}
    and
    \begin{equation}\label{Lvm-9}
        \int_t^{t+\tau} \int_\Omega |\Delta m|^2 \leq c_4.
    \end{equation}
    Hence, the combination of \eqref{Lvm-4} and \eqref{Lvm-8} gives \eqref{Lvm-1}. The estimate \eqref{Lvm-2} is a direct consequence of \eqref{Lvm-5} and \eqref{Lvm-9}.
\end{proof}

Next, we shall establish the boundedness of $\|u(\cdot,t)\|_{L^2}$ as follows.
\begin{lemma}\label{Lu2}
    Let $(u, v, m)$ be a solution of the system \eqref{sys-1} obtained in Lemma \ref{LS}. 
    Then there exists a positive constant $C_5$ independent of $t$ such that 
    \begin{equation}\label{Lu2-1}
        \|u(\cdot, t)\|_{L^{2}} \leq C_5, \quad \text{for all} \ t \in (0, T_{\max}).
    \end{equation}
\end{lemma}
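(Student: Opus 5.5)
We prove \eqref{Lu2-1} by an $L^2$ energy estimate for $u$. The diffusion of $u$ is nondegenerate by \eqref{Bg1-1}, and the cross terms coming from $\nabla\gamma$ are absorbed using the space-time bounds of Lemma \ref{Lvm}. Multiplying the first equation of \eqref{sys-1} by $u$ and integrating by parts (using the boundary condition $\nabla(\gamma u)\cdot\nu=0$) gives
\begin{equation*}
\frac12\frac{d}{dt}\int_\Omega u^2 = -\int_\Omega \gamma(v,m)|\nabla u|^2 - \int_\Omega u\gamma_v\nabla v\cdot\nabla u - \int_\Omega u\gamma_m \nabla m\cdot\nabla u + \alpha\int_\Omega u^2 m .
\end{equation*}
By \eqref{Bg1-1} and Young's inequality, the two cross terms are bounded by
\begin{equation*}
\frac{\gamma_0}{2}\int_\Omega|\nabla u|^2 + c\int_\Omega u^2(|\nabla v|^2+|\nabla m|^2).
\end{equation*}
The reaction term is at most $\alpha m^*\int_\Omega u^2$ by \eqref{L1-2}.

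\textbf{Handling $\int_\Omega u^2|\nabla v|^2$ (main obstacle).} We would use H\"older's inequality to bound it by $\|u\|_{L^4}^2\|\nabla v\|_{L^4}^2$. Then apply the two-dimensional Gagliardo--Nirenberg inequality to each factor:
\begin{equation*}
\|u\|_{L^4}^2\le c\|\nabla u\|_{L^2}\|u\|_{L^2}+c\|u\|_{L^2}^2,
\end{equation*}
and, by elliptic regularity under the Neumann condition together with the bound $\|\nabla v\|_{L^2}\le C_3$ from \eqref{Lvm-1},
\begin{equation*}
\|\nabla v\|_{L^4}^2\le c\|\Delta v\|_{L^2}\|\nabla v\|_{L^2}+c\|\nabla v\|_{L^2}^2\le c(\|\Delta v\|_{L^2}+1).
\end{equation*}
Young's inequality then gives
\begin{equation*}
c\int_\Omega u^2|\nabla v|^2\le \frac{\gamma_0}{4}\int_\Omega|\nabla u|^2 + c'(\|\Delta v\|_{L^2}^2+1)\int_\Omega u^2 .
\end{equation*}
The term with $\nabla m$ is treated in exactly the same way using the bound on $\int_\Omega|\nabla m|^2$ in \eqref{Lvm-1}. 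Collecting everything, $y(t):=\int_\Omega u^2$ satisfies
\begin{equation*}
y'(t)\le h(t)\,y(t),\qquad h:=c\big(1+\|\Delta v\|_{L^2}^2+\|\Delta m\|_{L^2}^2\big),
\end{equation*}
and by \eqref{Lvm-2} we have $\int_t^{t+\tau}h\le c$.

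\textbf{Uniform-in-time bound.} Since $h$ has only uniformly bounded local integrals and no damping term, a plain Gr\"onwall argument is not enough. We argue as follows. By \eqref{Lut-1}, for each $t\ge\tau$ there is $t_0\in[t-\tau,t]$ with $y(t_0)\le C_2/\tau$. Integrating $y'\le hy$ from $t_0$ to $t$ gives
\begin{equation*}
y(t)\le y(t_0)\exp\Big(\int_{t_0}^{t}h\Big)\le \frac{C_2}{\tau}e^{c}.
\end{equation*}
For $t\in(0,\tau]$ we use instead $y(t)\le \|u_0\|_{L^2}^2 e^{c}$. If $T_{\max}<\infty$, so that $\tau$ is small, the same argument still works with constants depending on $\tau$, which suffices for extensibility. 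This yields \eqref{Lu2-1}.

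The delicate point is the interpolation step: the quantity $\|\Delta v\|_{L^2}^2$ must appear only linearly, as a multiplier of $y$, so that its time integrability from \eqref{Lvm-2} can be exploited. The choice of the $L^4$ splitting above achieves exactly that.
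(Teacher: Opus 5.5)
Your proposal is correct and follows the paper's proof essentially step for step. It uses the same $L^2$ testing, the same H\"older/Gagliardo--Nirenberg splitting through $\|u\|_{L^4}^2\|\nabla v\|_{L^4}^2$ leading to $y'\le c(1+\|\Delta v\|_{L^2}^2+\|\Delta m\|_{L^2}^2)y$, and the same choice of a good time $t_0$ from \eqref{Lut-1} followed by Gr\"onwall with \eqref{Lvm-2}. Your handling of $t\in(0,\tau]$, integrating directly from $0$ with $y(0)=\|u_0\|_{L^2}^2$, is slightly cleaner than the paper's choice of $t_0\in(0,t)$ there.
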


\begin{proof}
    Multiplying the first equation of \eqref{sys-1} by $u$ and integrating the result by parts, then invoking  $0 < \gamma_{0} \leq \gamma (v, m) \leq \gamma_{1}, \ |\gamma_{v} (v, m)| \leq \gamma_{2}, \ |\gamma_{m} (v, m)| \leq \gamma_{3}$ in \eqref{Bg1-1}, we deduce that
    \begin{equation*}
         \begin{split}
            \frac{1}{2} \frac{d}{d t} \int_{\Omega} u^{2}  = &- \int_{\Omega} \nabla u \cdot \nabla(\gamma (v, m) u) + \alpha \int_{\Omega} u^{2} m \\
            \leq &- \int_{\Omega} \gamma(v, m) |\nabla u|^{2} - \int_{\Omega} \gamma_{v} (v, m) u \nabla u \cdot \nabla v \\
            &- \int_{\Omega} \gamma_{m} (v, m) u \nabla u \cdot \nabla m + \alpha \int_{\Omega} u^{2} m \\
            & \leq - \frac{\gamma_{0}}{2} \int_{\Omega} |\nabla u|^{2} + \frac{\gamma_{2}^{2}}{\gamma_{0}}  \int_{\Omega} u^{2}|\nabla v|^{2} +\frac{\gamma_{3}^{2}}{\gamma_0} \int_\Omega u^2|\nabla m|^{2} + \alpha \int_{\Omega} u^{2} m,
        \end{split}
    \end{equation*}
    which, combined  with $m(x,t)\leq \|m_0\|_{L^\infty}$ in \eqref{L1-2}, gives
    \begin{equation}\label{Lu2-2}
        \frac{d}{d t} \int_{\Omega} u^{2} + \gamma_{0} \int_{\Omega} |\nabla u|^{2} \leq \frac{2 (\gamma_{2}^{2} + \gamma_{3}^{2})}{\gamma_{0}} \int_{\Omega} u^{2} (|\nabla v|^2 + |\nabla m|^2) + 2 \alpha \|m_0\|_{L^\infty} \int_{\Omega} u^{2}.
    \end{equation}
    Moreover, applying Gagliardo-Nirenberg inequality and Young inequality, and using the fact $\|\nabla v(\cdot,t)\|_{L^2}+\|\nabla m(\cdot,t)\|_{L^2}\leq c_1$ in \eqref{Lvm-1}, we derive
    \begin{equation*}
        \begin{split}
            \frac{2 (\gamma_{2}^{2} + \gamma_{3}^{2})}{\gamma_{0}} \int_{\Omega} u^{2} (|\nabla v|^2 + |\nabla m|^2) 
            &\leq\frac{2 (\gamma_{2}^{2} + \gamma_{3}^{2})}{\gamma_{0}} \|u\|_{L^{4}}^{2}(\|\nabla v\|_{L^{4}}^{2} + \|\nabla m\|_{L^{4}}^{2}) \\
            &\leq c_2 (\|\nabla u\|_{L^{2}}\|u\|_{L^{2}}+\|u\|_{L^{2}}^{2}) (\|\Delta v\|_{L^{2}}\|\nabla v\|_{L^{2}}+\|\nabla v\|_{L^{2}}^{2}) \\ 
            &\quad+ c_2 (\|\nabla u\|_{L^{2}} \|u\|_{L^{2}} + \|u\|_{L^{2}}^{2}) (\|\Delta m\|_{L^{2}} \|\nabla m\|_{L^{2}} + \|\nabla m\|_{L^{2}}^{2}) \\
            &\leq \gamma_{0} \|\nabla u\|_{L^{2}}^{2} + c_3 \|u\|_{L^{2}}^{2}(\|\Delta v\|_{L^{2}}^{2} +\|\Delta m\|_{L^2}^2+1),\\
        \end{split}
    \end{equation*}
    which, substituted into \eqref{Lu2-2}, yields
    \begin{equation}\label{Lu2-3}
        \frac{d}{d t} \|u\|_{L^{2}}^{2} \leq c_4 (1+ \|\Delta v\|_{L^{2}}^{2} + \|\Delta m\|_{L^{2}}^{2}) \|u\|_{L^{2}}^{2}.
    \end{equation}
    On the other hand, for any $t \in (0, T_{\max})$ and in the case of either $t \in (0, \tau)$ or $t \geq \tau$ with $\tau = \min \left\{1, \frac{T_{\max}}{2}\right\}$, from \eqref{Lut-1} we can find a $t_{0} = t_{0} (t) \in ((t - \tau)_{+}, t)$ satisfying $t_{0} \geq 0$ and $t_{0} \in (0, \widetilde{T}_{\max})$, such that
    \begin{equation}\label{Lu2-4}
        \|u(\cdot, t_{0})\|_{L^{2}}^{2} \leq c_5.
    \end{equation}
    Therefore, we integrate \eqref{Lu2-3} over $(t_{0}, t)$, and utilize the facts \eqref{Lu2-4}, \eqref{Lvm-2} and $t \leq t_{0} + \tau\leq t_0 +1$ to obtain
    \begin{equation*}
        \|u(\cdot, t)\|_{L^{2}}^{2} \leq \|u(\cdot, t_{0})\|_{L^{2}}^{2} e^{c_4 \int_{t_{0}}^{t} \left(1 + \|\Delta v(\cdot, s)\|_{L^{2}}^{2} + \|\Delta m(\cdot, s)\|_{L^{2}}^{2}\right) d s} \leq c_6,
    \end{equation*}
    which guarantees \eqref{Lu2-1} and then the proof of Lemma \ref{Lu2} is completed.
\end{proof}
	
\begin{lemma}\label{Lu4}
    Let $(u, v, m)$ be the solution of the system \eqref{sys-1} with $\theta_1=0$ obtained in Lemma \ref{LS}. Then it holds that
    \begin{equation}\label{Lu4-1}
        \|u(\cdot, t)\|_{L^{4}} \leq C_6, \quad \text{for all} \ t \in (0, T_{\max}),
    \end{equation}
    where $C_{6} > 0$ is a constant independent of $t$.
\end{lemma}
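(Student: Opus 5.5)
We run an $L^4$ energy estimate for $u$, in the same way Lemma \ref{Lu2} was proved. The extra ingredient is a uniform-in-time bound on $\|\nabla v\|_{L^4}$ and $\|\nabla m\|_{L^4}$, which the $L^2$ bound of $u$ now makes available.

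\textbf{Step 1: gradient bounds for $v$ and $m$.} By Lemma \ref{Lu2}, $\|u(\cdot,t)\|_{L^2}\le C_5$. Together with $m\le \|m_0\|_{L^\infty}$ from \eqref{L1-2}, this gives
\[
\|(\theta_2 m)u\|_{L^2}\le c_1 \quad\text{and}\quad \|um\|_{L^2}\le c_1 .
\]
We apply the variation-of-constants formula to $v_t=(\Delta-1)v+\theta_2 mu$ and to $m_t=(\Delta-1)m+(1-u)m$, the form already used in \eqref{Lvm-6}. The Neumann heat semigroup estimate $\|\nabla e^{t\Delta}\varphi\|_{L^4}\le c(1+t^{-\frac12-\frac{1}{2}(\frac12-\frac14)\cdot 2})e^{-\lambda_1 t}\|\varphi\|_{L^2}$ in two dimensions has exponent $\frac12+\frac14=\frac34<1$, so the Duhamel integral converges uniformly in time. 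Hence
\[
\|\nabla v(\cdot,t)\|_{L^4}+\|\nabla m(\cdot,t)\|_{L^4}\le c_2 \quad\text{on }(0,T_{\max}),
\]
using $v_0,m_0\in W^{1,\infty}$ for the initial-data term.

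\textbf{Step 2: $L^4$ energy inequality.} We multiply the $u$-equation by $u^3$ and integrate by parts. With \eqref{Bg1-1} and Young's inequality this gives
\[
\frac14\frac{d}{dt}\int_\Omega u^4+\frac{3\gamma_0}{2}\int_\Omega u^2|\nabla u|^2\le c_3\int_\Omega u^4(|\nabla v|^2+|\nabla m|^2)+\alpha\|m_0\|_{L^\infty}\int_\Omega u^4 .
\]
We write $w=u^2$, so that $u^2|\nabla u|^2=\frac14|\nabla w|^2$. By Hölder, the cross term is bounded by $\|w\|_{L^4}^2(\|\nabla v\|_{L^4}^2+\|\nabla m\|_{L^4}^2)\le c_4\|w\|_{L^4}^2$ thanks to Step 1.

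\textbf{Step 3: closing the estimate.} We use the Gagliardo--Nirenberg inequality in 2D relative to $\|w\|_{L^1}=\|u\|_{L^2}^2\le C_5^2$:
\[
\|w\|_{L^4}^2\le c\,\|\nabla w\|_{L^2}^{3/2}\|w\|_{L^1}^{1/2}+c\|w\|_{L^1}^2 ,
\qquad
\|w\|_{L^2}^2\le c\,\|\nabla w\|_{L^2}\|w\|_{L^1}+c\|w\|_{L^1}^2 .
\]
Since both exponents of $\|\nabla w\|_{L^2}$ are below $2$, Young's inequality absorbs the right-hand side into the dissipation term. Adding $\int_\Omega u^4$ to both sides then yields
\[
\frac{d}{dt}\int_\Omega u^4+\int_\Omega u^4\le c_5 ,
\]
and an ODE comparison gives \eqref{Lu4-1}.

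\textbf{Main obstacle.} The main difficulty is Step 1: getting uniform-in-time $L^4$ gradient bounds for $v$ and $m$. This needs care with the semigroup exponents and with the non-sign-definite source $(1-u)m$ in the $m$-equation. If that step proves awkward, the fallback is to keep only $L^2$-in-time bounds on $\|\Delta v\|_{L^2}$ and $\|\Delta m\|_{L^2}$, as in Lemma \ref{Lu2}, and close with Lemma \ref{LB3} instead.
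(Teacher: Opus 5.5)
Your proof is correct and follows the paper's argument essentially step for step. You first get uniform $L^4$ bounds on $\nabla v$ and $\nabla m$ from the Duhamel formulas and the $L^2$ bound of Lemma \ref{Lu2}, then test the $u$-equation with $u^3$ and close via Gagliardo--Nirenberg for $u^2$ relative to $\|u^2\|_{L^1}=\|u\|_{L^2}^2$. The differences are cosmetic: the paper uses $e^{t\Delta}$ with source $-um$ for the $m$-equation, so your concern about the sign of $(1-u)m$ is moot, since only its $L^2$ norm enters. The paper also controls $\int_\Omega u^4$ through $\|u^2\|_{L^{1/2}}=\|u\|_{L^1}^2$ rather than through $\|u\|_{L^2}^2$.
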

    
\begin{proof}
    We first show that there exists a constant $c_1>0$ such that 
    \begin{equation}\label{Lu4-2}
        \|\nabla v(\cdot,t)\|_{L^4}+\|\nabla m(\cdot,t)\|_{L^4}\leq c_1.
    \end{equation}
    In fact, an application of the variation-of-constants formula to the second equation of \eqref{sys-1} gives
    \begin{equation}\label{Lu4-3}
         v(\cdot, t) = e^{(\Delta-1)t} v_{0} + \theta_2 \int_{0}^{t} e^{(\Delta-1)(t - s) }  m(\cdot, s) u(\cdot, s) d s.
    \end{equation}       
    Then employing the well-known semigroup estimates \cite{Winkler-2010-JDE} and the fact $\|u(\cdot,t)\|_{L^2}\leq c_2$ in \eqref{Lu2-1}, we derive from \eqref{Lu4-3} that
    \begin{equation}\label{Lu4-4}
        \begin{split}
            \|\nabla v(\cdot, t) \Vert_{L^{4}} & \leq \Vert \nabla e^{(\Delta-1)t} v_{0} \Vert_{L^{4}} + \theta_{2}\int_{0}^{t} \Vert \nabla e^{  (\Delta-1)(t - s)}  m(\cdot, s) u(\cdot, s) \Vert_{L^{4}} d s \\
            &\leq  k_{1} e^{-\lambda_1 t} \Vert \nabla v_{0} \Vert_{L^{4}} + k_{2} c_2 \theta_{2} \|m_0\|_{L^\infty} \int_{0}^{\infty} \left(1 + (t - s)^{- \frac{3}{4}}\right) e^{- (\lambda_{1}+1) (t - s)} d s\\
            &\leq c_3,
        \end{split}
    \end{equation}
   where $k_1,k_2$ are positive constants depending only on $\Omega$ and $\lambda_1>0$ is the first nonzero eigenvalue of $-\Delta$ in $\Omega$ under Neumann boundary conditions.
   
   Similarly, we can apply the variation-of-constants formula to the third equation of \eqref{sys-1} such that
   \begin{equation}\label{Lu4-5}
        m(\cdot, t) = e^{ \Delta t } m_{0} - \int_{0}^{t} e^{\Delta (t - s) } u(\cdot, s) m(\cdot, s) d s.
    \end{equation}
   Therefore, we infer from \eqref{Lu4-5} that
    \begin{equation*}
        \begin{split}
            \|\nabla m(\cdot,t)\|_{L^{4}} &\leq \|\nabla e^{\Delta t} m_{0}\|_{L^{4}} + \int_{0}^{t} \|\nabla e^{\Delta(t - s)} m(\cdot, s) u(\cdot, s)\|_{L^{4}} d s \\
            & \leq k_{1} e^{- \lambda_{1} t} \|\nabla m_{0}\|_{L^{4}} + k_{2} c_2 \|m_0\|_{L^\infty} \int_{0}^{\infty} \left(1 + (t - s)^{- \frac{3}{4}}\right) e^{- \lambda_{1} (t - s)} d s \\
            &\leq c_4,
        \end{split}
    \end{equation*}
    which, together with \eqref{Lu4-4}, yields \eqref{Lu4-2}.

    With \eqref{Lu4-2} in hand, we proceed to prove \eqref{Lu4-1}. To this end, we multiply the first equation of \eqref{sys-1} by $u^{3}$, integrate the result by parts and use the fact \eqref{Bg1-1} to obtain
    \begin{equation*}
        \begin{split}
            \frac{1}{4} \frac{d}{d t} \int_{\Omega} u^{4} & = - 3 \int_{\Omega} u^{2} \nabla u \cdot \nabla(\gamma(v, m) u) + \alpha \int_{\Omega} u^{4} m \\
            & \leq - \frac{3\gamma_{0}}{2}  \int_{\Omega} u^{2} |\nabla u|^{2} + \frac{3 (\gamma_{2}^{2} + \gamma_{3}^{2})}{\gamma_{0}} \int_{\Omega} u^{4} \left(|\nabla v|^{2}+ |\nabla m|^{2}\right) + \alpha \int_{\Omega} u^{4} m,
        \end{split}
    \end{equation*}
    which together with $m\leq \|m_0\|_{L^\infty}$ in \eqref{L1-2}, gives
    \begin{equation}\label{Lu4-7}
        \frac{d}{d t} \int_{\Omega} u^{4} + \frac{3\gamma_{0}}{2} \int_{\Omega} |\nabla u^{2}|^{2} \leq \frac{12 (\gamma_{2}^{2} + \gamma_{3}^{2})}{\gamma_{0}} \int_{\Omega} u^{4} (|\nabla v|^{2}+ |\nabla m|^{2}) + 4 \alpha \|m_0\|_{L^\infty} \int_{\Omega} u^{4}.
    \end{equation}
    Invoking Gagliardo-Nirenberg inequality and Young's inequality, and noting \eqref{Lu2-1} and \eqref{Lu4-2}, one has
    \begin{equation}\label{Lu4-8}
        \begin{split}
            \frac{12 (\gamma_{2}^{2} + \gamma_{3}^{2})}{\gamma_{0}} \int_{\Omega} u^{4} (|\nabla v|^{2}+ |\nabla m|^{2}) 
            \leq & \frac{12 (\gamma_{2}^{2} + \gamma_{3}^{2})}{\gamma_{0}} \|u^{2}\|_{L^{4}}^{2} (\|\nabla v\|_{L^{4}}^{2} + \|\nabla m\|_{L^{4}}^{2}) \\
            \leq & \frac{24 (\gamma_{2}^{2} + \gamma_{3}^{2})c_1^2}{\gamma_{0}} \|u^{2}\|_{L^{4}}^{2} \\
            \leq & c_5 (\|\nabla u^{2}\|_{L^{2}}^{\frac{3}{2}} \|u^{2}\|_{L^{1}}^{\frac{1}{2}} + \|u^{2}\|_{L^{1}}^{2}) \\
            \leq & \gamma_{0} \|\nabla u^{2}\|_{L^{2}}^{2} +c_6.
        \end{split}
    \end{equation}
    Furthermore, employing the Gagliardo-Nirenberg inequality and Young's inequality again, we can find a constant $c_{7} > 0$ such that
    \begin{equation}\label{Lu4-9}
        \begin{split}
            (1 + 4 \alpha \|m_0\|_{L^\infty}) \int_{\Omega} u^{4} & = (1 + 4 \alpha \|m_0\|_{L^\infty}) \|u^{2}\|_{L^{2}}^{2} \\
            & \leq c_{7} (1 + 4 \alpha \|m_0\|_{L^\infty}) (\|\nabla u^{2}\|_{L^{2}}^{\frac{3}{2}} \|u^{2}\|_{L^{\frac{1}{2}}}^{\frac{1}{2}} + \|u^{2}\|_{L^{\frac{1}{2}}}^{2}) \\
            & \leq c_{7} (1 + 4 \alpha \|m_0\|_{L^\infty}) (\|\nabla u^{2}\|_{L^{2}}^{\frac{3}{2}} \|u\|_{L^{1}} + \|u\|_{L^{1}}^{4})\\
            & \leq \frac{\gamma_{0}}{2} \|\nabla u^{2}\|_{L^{2}}^{2} + c_8.
        \end{split}
    \end{equation}
    Thus, we substitute \eqref{Lu4-8} and \eqref{Lu4-9} into \eqref{Lu4-7} such that
    \begin{equation*}
         \frac{d}{d t} \int_{\Omega} u^{4} + \int_{\Omega} u^{4} \leq c_6+c_8,
    \end{equation*}
    which, along with the Gr\"{o}nwall's inequality, yields \eqref{Lu4-1}. This completes the proof.
\end{proof}

\begin{lemma}\label{LI}
    Suppose that the conditions in Lemma \ref{Lu4} hold. Let $(u, v, m)$ be the solution of the system \eqref{sys-1} with $\theta_1=0$. Then it follows that
    \begin{equation}\label{LI-1}
        \|u(\cdot, t)\|_{L^{\infty}} \leq C_7 \quad \text{for all} \ t \in (0, T_{\max}),
    \end{equation}
    where the constant $C_{7} > 0$ is independent of $t$.
\end{lemma}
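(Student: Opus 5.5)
We plan to upgrade the $L^4$ bound of Lemma \ref{Lu4} to an $L^\infty$ bound in two stages: first obtain uniform $W^{1,\infty}$ bounds for $v$ and $m$, then run a Moser--Alikakos iteration on the $u$-equation written in divergence form.

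\textbf{Step 1: gradient bounds for $v$ and $m$.} Fix $p\in(2,4)$, say $p=3$. By Lemma \ref{Lu4} and \eqref{L1-2}, the source terms satisfy
\begin{equation*}
\|\theta_2 m u\|_{L^4}+\|um\|_{L^4}\le c_1 .
\end{equation*}
We apply the Neumann semigroup estimate
\begin{equation*}
\|\nabla e^{t\Delta} f\|_{L^\infty}\le k\bigl(1+t^{-\frac12-\frac{n}{2\cdot 4}}\bigr)e^{-\lambda_1 t}\|f\|_{L^4}, \qquad n=2,
\end{equation*}
to the variation-of-constants formulas \eqref{Lu4-3} and \eqref{Lu4-5}. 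The time singularity is then $(t-s)^{-3/4}$, which is integrable. Using $\|\nabla e^{t\Delta}w_0\|_{L^\infty}\le k\|\nabla w_0\|_{L^\infty}$ for $w_0\in W^{1,\infty}$, this gives
\begin{equation*}
\|\nabla v(\cdot,t)\|_{L^\infty}+\|\nabla m(\cdot,t)\|_{L^\infty}\le c_2 \quad \text{on } (0,T_{\max}).
\end{equation*}
For $m$ the exponent $e^{-\lambda_1(t-s)}$ remains available, because the gradient only sees the mean-zero part of the source. We expect this step to be routine.

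\textbf{Step 2: $L^p$ energy inequality for $u$.} Expanding $\Delta(\gamma u)=\nabla\cdot(\gamma\nabla u+u\gamma_v\nabla v+u\gamma_m\nabla m)$ and testing with $u^{p-1}$, we use \eqref{Bg1-1}, Young's inequality and $m\le m^*$ to obtain
\begin{equation*}
\frac{d}{dt}\int_\Omega u^p+\frac{2(p-1)\gamma_0}{p}\int_\Omega|\nabla u^{p/2}|^2\le c_3\,p^2\int_\Omega u^p ,
\end{equation*}
where $c_3$ depends on $\gamma_0,\gamma_2,\gamma_3,c_2,\alpha,m^*$ but not on $p$. The factor $p^2$ arises because
\begin{equation*}
(p-1)\int_\Omega u^{p-1}|\nabla u|\,|\nabla v|\le \frac{(p-1)\gamma_0}{2}\int_\Omega u^{p-2}|\nabla u|^2+c\,p\int_\Omega u^p .
\end{equation*}
Next we interpolate $\int_\Omega u^p=\|u^{p/2}\|_{L^2}^2$ between $\|\nabla u^{p/2}\|_{L^2}$ and $\|u^{p/2}\|_{L^1}$ via the two-dimensional Gagliardo--Nirenberg inequality $\|w\|_{L^2}^2\le C\|\nabla w\|_{L^2}\|w\|_{L^1}+C\|w\|_{L^1}^2$. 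Young's inequality then yields
\begin{equation*}
\frac{d}{dt}\int_\Omega u^p+\int_\Omega u^p\le c_4\,p^{\beta}\Bigl(\sup_{s}\int_\Omega u^{p/2}(\cdot,s)\Bigr)^2 ,
\end{equation*}
with a fixed power $\beta$.

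\textbf{Step 3: iteration.} Set $p_k=2^k$ and $M_k=\max\bigl\{\|u_0\|_{L^\infty}^{p_k}|\Omega|+1,\ \sup_{t}\int_\Omega u^{p_k}\bigr\}$. The ODE comparison above gives the recursion
\begin{equation*}
M_k\le c_5\,a^k M_{k-1}^2
\end{equation*}
for some $a>1$, starting from $M_2$, which is bounded by Lemma \ref{Lu4}. Taking logarithms and dividing by $2^k$ shows that $M_k^{1/2^k}$ stays bounded, hence $\|u\|_{L^\infty}\le C_7$.

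The main obstacle is bookkeeping: keeping the $p$-dependence of the constants polynomial, so that $\prod_k (c\,a^k)^{2^{-k}}<\infty$. This in turn requires the Step 1 gradient bounds to be genuinely $L^\infty$ rather than $L^4$. If the $L^\infty$ gradient estimate proved troublesome, we would instead use $\|\nabla v\|_{L^q}$ for some $q>2$ together with Hölder and Gagliardo--Nirenberg inside the iteration, at the cost of a slightly messier exponent count. Alternatively, we would invoke the known Moser-type lemma (Alikakos / Tao--Winkler) for $u_t\le\nabla\cdot(\gamma\nabla u+u\,\mathbf{b})+cu$ with bounded drift $\mathbf{b}$ and $L^1$-bounded $u$.
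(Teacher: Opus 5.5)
Your proposal is correct and follows essentially the same route as the paper. The paper likewise gets $L^\infty$ bounds on $\nabla v$ and $\nabla m$ from the $L^4$ bound on $u$, via Neumann semigroup estimates with the integrable $(t-s)^{-3/4}$ singularity. It then tests with $u^{p-1}$ (coefficient of order $p^2$), controls $\|u^{p/2}\|_{L^2}^2$ by $\epsilon\|\nabla u^{p/2}\|_{L^2}^2$ plus a multiple of $(1+\epsilon^{-1})\|u^{p/2}\|_{L^1}^2$ with $\epsilon\sim p^{-2}$, and runs an Alikakos--Moser iteration; the only difference is that you write out the recursion $M_k\le c_5a^kM_{k-1}^2$ explicitly, where the paper cites Alikakos.
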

    
\begin{proof}
    Invoking the semigroup estimates \cite{Winkler-2010-JDE} and noting the fact $\|u(\cdot,t)\|_{L^4}\leq c_1$ in \eqref{Lu4-1}, we infer from \eqref{Lu4-3} and \eqref{Lu4-5} that 
    \begin{equation*}
        \begin{split}
            \|\nabla v(\cdot, t)\|_{L^{\infty}} & \leq \|\nabla e^{(\Delta-1)t } v_{0}\|_{L^{\infty}} + \theta_{2}\int_{0}^{t} \|\nabla e^{(\Delta-1)(t - s) } m(\cdot, s) u(\cdot, s)\|_{L^{\infty}} d s \\
            & \leq c_{2} \|v_{0}\|_{W^{1, \infty}} + k_{2} \theta_{2} \|m_0\|_{L^\infty} \int_{0}^{t} \left(1 + (t - s)^{- \frac{3}{4}}\right) e^{- (\lambda_{1}+1) (t - s)} \|u(\cdot, s)\|_{L^{4}} d s \\
            & \leq c_{2} \|v_{0}\|_{W^{1, \infty}} + c_1 k_2 \theta_2 \|m_0\|_{L^\infty} \int_{0}^{\infty} \left(1 + (t - s)^{- \frac{3}{4}}\right) e^{- (\lambda_{1}+1) (t - s)}d s\\
            &\leq c_3
        \end{split}
    \end{equation*}
    and 
    \begin{equation*}
        \begin{split}
            \|\nabla m(\cdot, t)\|_{L^{\infty}} & \leq \|\nabla e^{\Delta t} m_{0}\|_{L^{\infty}} + \int_{0}^{t} \|\nabla e^{\Delta (t - s) }  u(\cdot, s) m(\cdot, s)\|_{L^{\infty}} d s \\
            & \leq c_{4} \|m_{0}\|_{W^{1, \infty}} + k_{2} c_1 \|m_0\|_{L^\infty} \int_{0}^{t} \left(1 + (t - s)^{- \frac{3}{4}}\right) e^{- \lambda_{1} (t - s)}  d s \\
            & \leq c_5,
        \end{split} 
    \end{equation*}
    and hence there exists a constant $c_6>0$ such that
    \begin{equation}\label{LI-2}
        \|\nabla v(\cdot,t)\|_{L^\infty}+\|\nabla m(\cdot,t)\|_{L^\infty}\leq c_6.
    \end{equation}
   With \eqref{LI-2} and \eqref{Bg1-1} in hand, 
    we multiply the first equation of \eqref{sys-1} by $u^{p - 1} (p \geq 2)$ and integrate the result by parts such that
    \begin{equation}\label{LI-3}
        \begin{split}
            \frac{1}{p} \frac{d}{d t} \int_{\Omega} u^{p} = & - (p - 1) \int_{\Omega} u^{p - 2} \nabla u \cdot \nabla (\gamma (v, m) u) + \alpha \int_{\Omega} u^{p} m \\
            \leq & - \frac{\gamma_{0} (p - 1)}{2} \int_{\Omega} u^{p - 2} |\nabla u|^{2} + \frac{(p - 1)\gamma_2^2}{\gamma_0} \int_{\Omega} u^{p} |\nabla v|^{2} \\
            &+ \frac{(p - 1)\gamma_3^2}{\gamma_0} \int_{\Omega} u^{p} |\nabla m|^{2}  + \alpha \int_{\Omega} u^{p} m \\
            \leq & - \frac{\gamma_{0} (p - 1)}{2} \int_{\Omega} u^{p - 2} |\nabla u|^{2} +c_7 (p - 1) \int_{\Omega}u^{p},
        \end{split}
    \end{equation}
    where $c_7:=\frac{(\gamma_{2}^{2} + \gamma_{3}^{2}) c_6^2}{\gamma_{0}} + \alpha \|m_0\|_{L^\infty}>0$ is independent of $p$.
    Employing the identity $\int_{\Omega} u^{p - 2} |\nabla u|^{2} = \frac{4}{p^{2}} \int_{\Omega} |\nabla u^{\frac{p}{2}}|^{2}$, we can obtain from \eqref{LI-3} that
    \begin{equation}\label{LI-4}
        \frac{d}{d t} \int_{\Omega} u^{p} + p (p - 1) \int_{\Omega} u^{p} \leq - \frac{2 \gamma_{0} (p - 1)}{p} \int_{\Omega} |\nabla u^{\frac{p}{2}}|^{2} + (c_7 + 1) p (p - 1) \int_{\Omega}u^{p}.
    \end{equation}
    We recall the inequality $\|f\|_{L^{2}}^{2} \leq \epsilon \|\nabla f\|_{L^{2}}^{2} + c (1 + \epsilon^{- 1}) \|f\|_{L^{1}}^{2}$ for any $\epsilon > 0$, where $c > 0$ only depends on $\Omega$ (see \cite{JSW-2020-JDE}). Then letting $f = u^{\frac{p}{2}}$ and $\epsilon=\frac{2\gamma_0}{(c_7+1)p^2}$, we deduce that
    \begin{equation}\label{LI-5}
        (c_7 + 1) p (p - 1) \int_{\Omega}u^{p} \leq \frac{2 \gamma_{0} (p - 1)}{p} \int_{\Omega} |\nabla u^{\frac{p}{2}}|^{2} +c_8 p (p - 1) (1 + p^{2}) \left(\int_{\Omega} u^{\frac{p}{2}}\right)^{2},
    \end{equation}
    where $c_8>0$ is a constant independent of $p$.
    From $1 + p^{2} \leq (1 + p)^{2}$, substituting \eqref{LI-5} into \eqref{LI-4} implies
    \begin{equation*}
        \frac{d}{d t} \int_{\Omega} u^{p} + p (p - 1) \int_{\Omega} u^{p} \leq c_8 p (p - 1) (1 + p)^{2} \left(\int_{\Omega} u^{\frac{p}{2}}\right)^{2},
    \end{equation*}
    which gives
    \begin{equation}\label{LI-6}
        \int_{\Omega} u^{p} (x, t) \leq \int_{\Omega} u_{0}^{p} (x) + c_8 (1 + p)^{2} \sup_{0 \leq t \leq T_{\max}} \left(\int_{\Omega} u^{\frac{p}{2} }(x, t)\right)^{2}.
    \end{equation}
    Hence, applying the Moser iteration \cite{Alikakos-1979-CPDE} (see also the similar argument as in \cite{Tao-2011-JMAA}), from \eqref{LI-6} we derive
    \begin{equation*}
        \|u(\cdot, t)\|_{L^{\infty}} \leq 2^{6} c_8 (1 + |\Omega|) (1 + \alpha) (\|u_{0}\|_{L^{\infty}} + \|m_{0}\|_{L^{\infty}}),
    \end{equation*}
    which yields \eqref{LI-1}.
\end{proof}
    
\begin{proof}[\textbf{Proof of Theorem \ref{GBS-1}: Boundedness}]
    For any fixed $\alpha \geq 0$ and $n=2$, from Lemma \ref{LI}, we can find a constant $C > 0$ independent of $t$ such that
    \begin{equation*}
        \|u(\cdot, t)\|_{L^{\infty}} \leq C ,
    \end{equation*}
    which, combined with the local existence results in Lemma \ref{LS}, proves the existence of global classical solution with uniform-in-time bound as stated in Theorem \ref{GBS-1}.
\end{proof}

\subsection{Global stabilization: $\theta_1=0$}
In this subsection, we shall show the global stabilization of solution for the system \eqref{sys-1} with $\theta_1=0$. For this purpose, we first improve the regularities as follows.
\begin{lemma}\label{Rg}
    Let $(u, v, m)$ be the non-negative global classical solution of \eqref{sys-1} obtained in Theorem \ref{GBS-1}. Then there exist $\sigma \in (0,1)$ and $C>0$ such that
    \begin{equation*}
        \|(u,v,m)\|_{C^{2 + \sigma, 1 + \frac{\sigma}{2}}(\bar{\Omega} \times[t, t + 1])} \leq C, \quad \text {for all} \ t \geq 1.
    \end{equation*}
\end{lemma}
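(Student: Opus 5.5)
We would prove Lemma \ref{Rg} by a bootstrap through standard parabolic regularity, starting from the uniform bounds already established: $\|u(\cdot,t)\|_{L^\infty}\le C_7$ by Lemma \ref{LI}, $0\le v\le C_1$ by Lemma \ref{Bv1}, $0\le m\le m^*$ by \eqref{L1-2}, and $\|\nabla v\|_{L^\infty}+\|\nabla m\|_{L^\infty}\le c_6$ by \eqref{LI-2}, all uniform in $t\ge0$. By \eqref{Bg1-1}, $\gamma(v,m)\in[\gamma_0,\gamma_1]$ with bounded first derivatives, so the $u$-equation is uniformly parabolic.

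\textbf{Step 1 ($v$ and $m$).} The equations for $v$ and $m$ are linear heat equations with bounded right-hand sides $\theta_2 mu - v$ and $-um$. Maximal Sobolev regularity on cylinders $\Omega\times(t-1,t+1)$ (with a time cutoff, for $t\ge1$) yields uniform $W^{2,1}_p$ bounds for every $p<\infty$, hence, by embedding, uniform H\"older bounds $\|v\|_{C^{1+\sigma,(1+\sigma)/2}}+\|m\|_{C^{1+\sigma,(1+\sigma)/2}}\le C$ on $\bar\Omega\times[t,t+1]$.

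\textbf{Step 2 ($u$, H\"older).} We write the $u$-equation in divergence form:
\[
u_t=\nabla\cdot\big(\gamma\nabla u+u(\gamma_v\nabla v+\gamma_m\nabla m)\big)+\alpha um .
\]
This is uniformly parabolic with bounded coefficients and bounded lower-order terms. The H\"older regularity theory of Porzio--Vespri (or Ladyzhenskaya et al.) with the conormal boundary condition then gives $\|u\|_{C^{\sigma,\sigma/2}(\bar\Omega\times[t,t+1])}\le C$ for $t\ge1$.

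\textbf{Step 3 (Schauder bootstrap).} With $u$ H\"older continuous, the forcing terms $\theta_2mu$ and $um$ are H\"older. Parabolic Schauder theory for the Neumann problem then gives $v,m\in C^{2+\sigma,1+\sigma/2}$ uniformly. For $u$ we expand into nondivergence form,
\[
u_t=\gamma\Delta u+2\nabla\gamma\cdot\nabla u+u\Delta\gamma+\alpha um ,
\]
and before applying Schauder estimates we must know that $\nabla u$ and the coefficients are H\"older. One way is to set $w=\gamma u$, which satisfies $w_t=\gamma\Delta w+(\gamma_t/\gamma)w+\alpha wm$ with homogeneous Neumann data, where $\gamma_t=\gamma_v v_t+\gamma_m m_t$ is H\"older by Step 3 for $v,m$ and since $\gamma\in C^3$. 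Schauder theory applied to $w$ gives $w\in C^{2+\sigma,1+\sigma/2}$. Dividing by $\gamma\ge\gamma_0$, which lies in $C^{2+\sigma,1+\sigma/2}$ as a $C^3$ function of $(v,m)$, yields the bound for $u$. The constants are independent of $t$ because every input bound is.

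\textbf{Main obstacle.} The delicate point is Step 2, together with handling the boundary condition $\nabla(u\gamma)\cdot\nu=0$ in the bootstrap, which is nonstandard for $u$ itself. Passing to $w=\gamma u$ converts it into a homogeneous Neumann condition, so we would work with $w$ from the start of Step 2 as well. Applied to $w$, the equation is nondivergent with a merely bounded zeroth-order coefficient $\gamma_t/\gamma$ at that stage, so we would first obtain $W^{2,1}_p$ estimates for $w$, using the $L^p$ bounds on $v_t,m_t$ from Step 1, and then embed into H\"older spaces.
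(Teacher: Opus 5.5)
Your proposal is correct and follows essentially the same route as the paper, which defers to the standard bootstrap of \cite[Lemma 4.1]{JSW-2020-JDE}. That bootstrap has three steps: H\"older continuity of $u$ from its divergence-form equation via Porzio--Vespri, Schauder estimates for $v$ and $m$, and then Schauder estimates for $u$ rewritten in nondivergence form. Your substitution $w=\gamma u$ is valid but an unnecessary detour, for two reasons. First, $\partial_\nu v=\partial_\nu m=0$ gives $\nabla\gamma(v,m)\cdot\nu=0$ on $\partial\Omega$, and together with $\gamma>0$ this makes the boundary condition simply $\partial_\nu u=0$. Second, linear Schauder theory only requires the coefficients $\gamma$, $2\nabla\gamma$ and $\Delta\gamma+\alpha m$ to be H\"older (not $\nabla u$), and this holds once $v,m\in C^{2+\sigma,1+\frac{\sigma}{2}}$.
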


\begin{proof} 
    We can use the similar arguments as in \cite[Lemma 4.1]{JSW-2020-JDE} to prove Lemma \ref{Rg}. We omit the details of proof for convenience.
\end{proof}

\begin{lemma}\label{Lm}
    Let $(u, v, m)$ be the solution of the system \eqref{sys-1} obtained in Theorem \ref{GBS-1}. Then it holds that
    \begin{equation}\label{Lm-1}
        \int_{0}^{\infty} \int_{\Omega} um < \infty
    \end{equation}
    and
    \begin{equation}\label{Lm-2}
        \int_{0}^{\infty} \int_{\Omega} \left| \nabla m \right|^{2} < \infty.
    \end{equation}
\end{lemma}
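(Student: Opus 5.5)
Both bounds follow from elementary integral identities for the third equation of \eqref{sys-1}, using only nonnegativity and the bound \eqref{L1-2}. The global solution from Theorem \ref{GBS-1} is classical on $(0,\infty)$, so all integrations by parts below are justified for $t>0$, with continuity at $t=0$.

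\textbf{Step 1: the bound \eqref{Lm-1}.} Integrate the third equation of \eqref{sys-1} over $\Omega$. The Neumann condition on $m$ gives
\begin{equation*}
\frac{d}{dt}\int_\Omega m = -\int_\Omega um \le 0 .
\end{equation*}
Integrating in time over $(0,t)$ yields
\begin{equation*}
\int_0^t\int_\Omega um = \int_\Omega m_0 - \int_\Omega m(\cdot,t) \le \|m_0\|_{L^1},
\end{equation*}
since $m\ge 0$ by Lemma \ref{L1}. Letting $t\to\infty$ gives \eqref{Lm-1}. One could equally use \eqref{L1-4} together with the upper bound in \eqref{L1-1}: this gives $\alpha\int_0^t\int_\Omega um\le \alpha\|m_0\|_{L^1}$, but that route only works when $\alpha>0$, so the $m$-equation is the better choice.

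\textbf{Step 2: the bound \eqref{Lm-2}.} Multiply the third equation by $m$ and integrate by parts:
\begin{equation*}
\frac12\frac{d}{dt}\int_\Omega m^2 + \int_\Omega|\nabla m|^2 = -\int_\Omega u m^2 \le 0 .
\end{equation*}
Integrating over $(0,t)$ and dropping the nonnegative term $\frac12\int_\Omega m^2(\cdot,t)$ gives
\begin{equation*}
\int_0^t\int_\Omega|\nabla m|^2 \le \frac12\|m_0\|_{L^2}^2 .
\end{equation*}
Letting $t\to\infty$ gives \eqref{Lm-2}.

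Neither step presents a real obstacle. The only points to check are the sign of the reaction term, which is nonpositive because $u,m\ge 0$, and that the constants depend only on $m_0$. In particular, neither the uniform bounds nor Lemma \ref{Rg} are needed here. Those will matter later, when these integrability statements are combined with uniform continuity in time to deduce $\|m(\cdot,t)\|_{L^\infty}\to 0$.
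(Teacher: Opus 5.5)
Your proposal is correct and follows the paper's proof. Integrating the $m$-equation over $\Omega$ gives $\int_0^t\int_\Omega um\le\int_\Omega m_0$, and testing it with $m$ gives $\int_0^t\int_\Omega|\nabla m|^2\le\frac12\int_\Omega m_0^2$, exactly as the paper does, with no need for the uniform bounds.
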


\begin{proof}
    We derive from the third equation of \eqref{sys-1} and the homogeneous Neumann boundary condition that
    \begin{equation*}
        \frac{d}{d t} \int_{\Omega} m + \int_{\Omega} u m = 0,
    \end{equation*}
    which gives
    \begin{equation}\label{Lm-3}
        \int_{0}^{t} \int_{\Omega} u m \leq \int_{\Omega} m_{0}, \quad \text{for all} \ t > 0,
    \end{equation}
    and \eqref{Lm-1} is an immediate result of \eqref{Lm-3}.
     Moreover, multiplying the third equation of \eqref{sys-1} by $m$ and integrating it by parts, we obtain
    \begin{equation}\label{Lm-4}
        \frac{1}{2} \frac{d}{d t} \int_{\Omega} m^{2} = - \int_{\Omega} |\nabla m|^2 - \int_{\Omega} u m^2.
    \end{equation}
    By the direct integration with respect to time, we infer from \eqref{Lm-4} that 
    \begin{equation*}
        \int_{0}^{t} \int_{\Omega} |\nabla m|^{2} \leq \frac{1}{2} \int_{\Omega} m_{0}^{2},
    \end{equation*}
    which yields \eqref{Lm-2}.
\end{proof}

\begin{lemma}\label{Lm1}
    Let the conditions in Lemma \ref{Lm} hold. Then there exists a time sequence $(t_{k})_{k \in \mathbb{N}} \subset (0, \infty)$ satisfying $t_{k} \to \infty$ as $k \to \infty$ such that
    \begin{equation}\label{Lm1-1}
        \int_{t_{k}}^{t_{k} + 1} \int_{\Omega} m \to 0 \quad \text{as} \quad k \to \infty.
    \end{equation}
\end{lemma}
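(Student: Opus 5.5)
We need a sequence $t_k\to\infty$ with $\int_{t_k}^{t_k+1}\int_\Omega m\to 0$. The plan is to show $\int_0^\infty\int_\Omega m^2<\infty$ (or simply $\int_0^\infty\int_\Omega m<\infty$) and then extract the sequence. We control $m$ through its mean and its oscillation. By the Poincar\'e inequality, $\int_\Omega (m-\bar m)^2\le c_P\int_\Omega|\nabla m|^2$, which is integrable in time by \eqref{Lm-2}. So it suffices to show $\int_0^\infty \bar m^2\,dt<\infty$, or at least that $\bar m(t)\to 0$ along a sequence. Note that $\bar m$ is nonincreasing, since $\frac{d}{dt}\int_\Omega m=-\int_\Omega um\le 0$. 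Hence $\bar m(t)\to \mu\ge 0$, and proving $\mu=0$ already gives \eqref{Lm1-1} for any $t_k\to\infty$, via $\int_\Omega m=|\Omega|\bar m$.

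\textbf{Showing $\mu=0$.} Suppose instead $\mu>0$, so $\bar m(t)\ge\mu$ for all $t$. We use \eqref{Lm-1}:
\[
\int_\Omega um=\bar m\int_\Omega u+\int_\Omega u(m-\bar m)\ge \mu\|u_0\|_{L^1}-\|u\|_{L^\infty}|\Omega|^{1/2}\|m-\bar m\|_{L^2}.
\]
The first inequality uses \eqref{L1-1}, and the last term is controlled by the uniform bound $\|u\|_{L^\infty}\le C$ from Lemma \ref{LI}. Integrating over $(t,t+1)$ and using Cauchy--Schwarz together with the Poincar\'e inequality gives
\[
\int_t^{t+1}\!\!\int_\Omega um\ge \mu\|u_0\|_{L^1}-C'\Big(\int_t^{t+1}\!\!\int_\Omega|\nabla m|^2\Big)^{1/2}.
\]
By \eqref{Lm-2}, the last integral tends to $0$ as $t\to\infty$. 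So for all large $t$ the left side is at least $\frac12\mu\|u_0\|_{L^1}>0$. Summing over consecutive unit intervals then contradicts \eqref{Lm-1}. Hence $\mu=0$.

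\textbf{Conclusion.} Because $\bar m$ is monotone with limit $0$, we get $\int_{t}^{t+1}\int_\Omega m\le|\Omega|\bar m(t)\to0$, so any sequence $t_k\to\infty$ works, for instance $t_k=k$.

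The only delicate point is the lower bound for $\int_\Omega um$. It relies on the positivity $\|u\|_{L^1}\ge\|u_0\|_{L^1}>0$ and on the uniform $L^\infty$ bound for $u$, both already established. If one wished to avoid the $L^\infty$ bound, an $L^2$ bound on $u$ from Lemma \ref{Lu2} would suffice, pairing it with $\|m-\bar m\|_{L^2}$ in the same way.
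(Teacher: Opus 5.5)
Your proof is correct and rests on the same decomposition as the paper's, $\int_\Omega um=\bar m\int_\Omega u+\int_\Omega u(m-\bar m)$: the oscillation term is killed by H\"older, Poincar\'e and \eqref{Lm-2}, and the mean term is bounded below via $\|u\|_{L^1}\ge\|u_0\|_{L^1}$. The paper uses the $L^2$ bound of Lemma \ref{Lu2} where you use $L^\infty$, and it argues directly that $\|u_0\|_{L^1}\int_j^{j+1}\bar m\to0$, so your contradiction step is unnecessary, though together with the monotonicity of $\bar m$ it does give the slightly stronger conclusion $\bar m(t)\to0$ for all $t\to\infty$.
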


\begin{proof}
    From \eqref{Lm-1}, we derive 
    \begin{equation}\label{Lm1-2}
        \int_{j}^{j+1} \int_{\Omega} u m \to 0, \quad \text{as} \quad j \to \infty.
    \end{equation}
    In addition, defining $\bar{m} := \frac{1}{|\Omega|} \int_{\Omega} m$, we deduce that
    \begin{equation}\label{Lm1-3}
        \int_{j}^{j+1} \int_{\Omega} u m = \int_{j}^{j+1} \int_{\Omega} u (m - \bar{m}) + \int_{j}^{j+1} \int_{\Omega} u \bar{m} = I_{1} (j) + I_{2} (j).
    \end{equation}
    Owing to \eqref{Lm-2} and $\|u(\cdot, t)\|_{L^{2}} \leq c_1$ in \eqref{Lu2-1}, we apply the H{\"o}lder inequality and Poincar$\acute{\text{e}}$ inequality to obtain
    \begin{equation}\label{Lm1-4}
        \begin{split}
            |I_{1} (j)| & \leq \left(\int_{j}^{j+1} \int_{\Omega} u^{2} \right)^{\frac{1}{2}} \cdot \left(\int_{j}^{j+1} \int_{\Omega} |m - \bar{m}|^{2}\right)^{\frac{1}{2}} \\
            & \leq c_{2} \left(\int_{j}^{j+1} \int_{\Omega} |m-\bar{m}|^{2}\right)^{\frac{1}{2}} \\
            & \leq c_{3} \left(\int_{j}^{j+1} \int_{\Omega} |\nabla m|^{2}\right)^{\frac{1}{2}} \to 0, \quad \text{as} \quad j \to \infty.
        \end{split}
    \end{equation}
    Then the combination of \eqref{Lm1-2}, \eqref{Lm1-3} and \eqref{Lm1-4} gives  $I_{2} (j) \to 0$ as $j \to \infty$.
        
    Moreover, using $\|u_0\|_{L^1}\leq \|u(\cdot,t)\|_{L^1}$ in \eqref{L1-1} and the fact that $I_{2} (j) \to 0$ as $j \to \infty$, one has
    \begin{equation*}
        \bar{u}_0 \int_{j}^{j + 1} \int_{\Omega} m = \|u_{0}\|_{L^1} \int_{j}^{j+1} \bar{m} \leq I_{2} (j) = \int_{j}^{j+1} \int_{\Omega} u \bar{m} \to 0, \quad \text{as} \quad j \to \infty,
    \end{equation*}
    which yields
    \begin{equation}\label{Lm1-5}
        \int_{j}^{j + 1} \int_{\Omega} m \to 0, \quad \text{as} \quad j \to \infty.
    \end{equation}
    Therefore, we set $j = t_{k}$, $k \in \mathbb{N}$ such that $t_{k} \to \infty$ as $k \to \infty$ and then \eqref{Lm1-1} is a direct consequence of \eqref{Lm1-5}.
    Hence, the proof of Lemma \ref{Lm1} is completed.
\end{proof}

\begin{lemma}\label{Lmi}
    Let $(u, v, m)$ be the solution of the system \eqref{sys-1} obtained in Theorem \ref{GBS-1}. Then it follows that
    \begin{equation}\label{Lmi-1}
        \|m(\cdot, t)\|_{L^{\infty}} \to 0 \quad \text{as} \quad t \to \infty.
    \end{equation}
\end{lemma}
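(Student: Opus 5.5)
We combine the monotonicity of $\|m(\cdot,t)\|_{L^\infty}$ from \eqref{L1-3} with Lemma \ref{Lm1} and the uniform Hölder bounds of Lemma \ref{Rg}. Because $t\mapsto \|m(\cdot,t)\|_{L^\infty}$ is non-increasing and nonnegative, the limit $\ell:=\lim_{t\to\infty}\|m(\cdot,t)\|_{L^\infty}$ exists. It therefore suffices to show that $\|m(\cdot,s_k)\|_{L^\infty}\to 0$ along one sequence $s_k\to\infty$; monotonicity then forces $\ell=0$.

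To build such a sequence, I take $(t_k)$ from Lemma \ref{Lm1}, so that $\int_{t_k}^{t_k+1}\int_\Omega m\to 0$. Since $m\ge 0$, the mean value theorem in time gives, for each $k$, some $s_k\in(t_k,t_k+1)$ with $\int_\Omega m(\cdot,s_k)\le \int_{t_k}^{t_k+1}\int_\Omega m\to0$. Hence $\|m(\cdot,s_k)\|_{L^1}\to 0$.

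The remaining step is to upgrade $L^1$ convergence to $L^\infty$ convergence, and this is where uniform regularity is needed. For $t\ge1$, Lemma \ref{Rg} gives a bound on $\|m(\cdot,t)\|_{C^{2+\sigma}(\bar\Omega)}$ independent of $t$. In particular, Lemma \ref{LI} together with the $W^{1,\infty}$ bound already gives $\|\nabla m(\cdot,t)\|_{L^\infty}\le c$. I would then use the Gagliardo–Nirenberg inequality in two dimensions,
\[
\|m\|_{L^\infty}\le C\|m\|_{W^{1,\infty}}^{2/3}\|m\|_{L^1}^{1/3}+C\|m\|_{L^1},
\]
which yields $\|m(\cdot,s_k)\|_{L^\infty}\le C'\|m(\cdot,s_k)\|_{L^1}^{1/3}+C\|m(\cdot,s_k)\|_{L^1}\to0$.

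Alternatively, one can argue by compactness. By Arzelà–Ascoli, $\{m(\cdot,s_k)\}$ is relatively compact in $C(\bar\Omega)$. Any subsequential uniform limit $m_\infty$ is nonnegative and continuous with $\int_\Omega m_\infty=0$, so $m_\infty\equiv0$, and therefore the whole sequence converges uniformly to $0$.

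Combining this with \eqref{L1-3}, for any $t\ge s_k$ we have $\|m(\cdot,t)\|_{L^\infty}\le\|m(\cdot,s_k)\|_{L^\infty}$, and letting $k\to\infty$ gives \eqref{Lmi-1}.

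The only delicate point is the passage from $L^1$ to $L^\infty$, and it is settled by the uniform gradient bound $\|\nabla m(\cdot,t)\|_{L^\infty}\le c$ from the boundedness part, which holds for all $t>0$ and is therefore available without Lemma \ref{Rg}. No other obstacle is expected.
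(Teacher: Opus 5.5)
Your proposal is correct and follows essentially the paper's argument. Both interpolate, via Gagliardo--Nirenberg, between a uniform gradient bound on $m$ and the vanishing of $\int_{t_k}^{t_k+1}\int_\Omega m$ from Lemma \ref{Lm1}, obtain $\liminf_{t\to\infty}\|m(\cdot,t)\|_{L^\infty}=0$, and conclude by the monotonicity \eqref{L1-3}. The only differences are cosmetic: the paper uses the $L^4$ gradient bound \eqref{Lu4-2} with a $\mu$-Young splitting and averages over $(t_k,t_k+1)$, whereas you select single times $s_k$ and use the $W^{1,\infty}$ bound \eqref{LI-2} (or Arzel\`a--Ascoli).
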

	
\begin{proof}
    We employ the Gagliardo-Nirenberg inequality to find a constant $c_{1} > 0$ satisfying
    \begin{equation}\label{Lmi-2}
        \begin{split}
            \|m(\cdot, t)\|_{L^\infty} & \leq c_{1} \|\nabla m(\cdot, t)\|_{L^4}^{\frac{4}{5}} \|m(\cdot, t)\|_{L^1}^{\frac{1}{5}} + c_{1} \|m(\cdot, t)\|_{L^1} \\
            & \leq \mu \|\nabla m(\cdot, t)\|_{L^4} + c_{2} \|m(\cdot, t)\|_{L^1},
        \end{split}
    \end{equation}
    where $\mu > 0$ is an arbitrary constant and $c_{2} > 0$ is a constant depending on $\mu$. Let $(t_{k})_{k \in \mathbb{N}} \subset (0, \infty)$ be the sequence chosen in Lemma \ref{Lm1}. Then an application of $\|\nabla m(\cdot,t)\|_{L^4}\leq c_3$ and \eqref{Lmi-2} shows 
    \begin{equation*}
        \int_{t_{k}}^{t_{k} + 1} \|m(\cdot, t)\|_{L^\infty}\leq \mu c_3+c_2\int_{t_{k}}^{t_{k} + 1} \|m(\cdot,t)\|_{L^1},
    \end{equation*}
    which, together with the arbitrary of $\mu$ and \eqref{Lm1-1}, gives
    \begin{equation*}
        \int_{t_{k}}^{t_{k} + 1} \|m(\cdot, t)\|_{L^\infty} \to 0, \quad \text{as} \quad k \to \infty,
    \end{equation*}
    and hence
    \begin{equation}\label{Lmi-3}
        \liminf_{t \to \infty} \|m(\cdot, t)\|_{L^\infty} = 0.
    \end{equation}
    Combining \eqref{Lmi-3} with the fact that $t \mapsto \|m(\cdot, t)\|_{L^\infty}$ is monotone as shown in Lemma \ref{L1}, we obtain \eqref{Lmi-1} and hence the proof of Lemma \ref{Lmi} is completed.
\end{proof}
\begin{lemma}\label{Lu2-c}	
    Let $(u, v, m)$ be the solution of the system \eqref{sys-1} obtained in Theorem \ref{GBS-1}. Then it holds that
    \begin{equation}\label{Lu2-c1}
        \lim_{t \to \infty} (\|u(\cdot, t) - u_{*}\|_{L^\infty} + \|v(\cdot, t)\|_{L^\infty}) = 0,
    \end{equation}
    where $u_{*} = \frac{1}{|\Omega|} \int_\Omega u_{0} + \frac{\alpha}{|\Omega|} \int_\Omega m_{0}.$
\end{lemma}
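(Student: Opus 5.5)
The plan is to first prove $L^2$-type convergence and then upgrade it to $L^\infty$ using the uniform H\"older bounds of Lemma \ref{Rg}.

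\textbf{Step 1: $v\to0$.} The function $z=v+\theta_2 m$ solves \eqref{Bv1-2}, namely $z_t-\Delta z+z=\theta_2 m$. By the variation-of-constants formula,
\[
\|z(\cdot,t)\|_{L^\infty}\le k_1e^{-t}\|z(\cdot,t_0)\|_{L^\infty}+\theta_2 k_1\int_{t_0}^t e^{-(t-s)}\|m(\cdot,s)\|_{L^\infty}\,ds .
\]
Lemma \ref{Lmi} gives $\|m(\cdot,s)\|_{L^\infty}\le\varepsilon$ for $s\ge t_0$. Letting $t\to\infty$ and then $\varepsilon\to0$ yields $\|z(\cdot,t)\|_{L^\infty}\to0$. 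Since $0\le v\le z$, we get $\|v(\cdot,t)\|_{L^\infty}\to0$.

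\textbf{Step 2: the mean of $u$.} Lemma \ref{L1} gives $\int_\Omega(u+\alpha m)=\int_\Omega(u_0+\alpha m_0)$. Together with $\|m(\cdot,t)\|_{L^\infty}\to0$, this shows $\bar u(t)\to u_*$.

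\textbf{Step 3: $\int_t^{t+1}\!\int_\Omega|\nabla u|^2\to0$.} Test the $u$-equation with $u$, exactly as in Lemma \ref{Lu2}. Using \eqref{Bg1-1}, the bound \eqref{LI-2} on $\nabla v$ and $\nabla m$, and the uniform bound of Lemma \ref{LI} on $u$, Young's inequality gives
\[
\frac{d}{dt}\int_\Omega u^2+\gamma_0\int_\Omega|\nabla u|^2\le c\int_\Omega(|\nabla v|^2+|\nabla m|^2)+c\int_\Omega um .
\]
The $\nabla m$ term and the $um$ term are time-integrable by Lemma \ref{Lm}. The obstacle is the $\nabla v$ term, for which no time-integrability has been shown. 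I would obtain it by testing the $v$-equation with $v$:
\[
\frac12\frac{d}{dt}\int_\Omega v^2+\int_\Omega|\nabla v|^2+\int_\Omega v^2=\theta_2\int_\Omega umv\le c\int_\Omega um ,
\]
using the bound \eqref{Bv1-1} on $v$. Integrating in time shows $\int_0^\infty\int_\Omega|\nabla v|^2<\infty$, by Lemma \ref{Lm}. Since $\int_\Omega u^2$ is bounded, integrating the $u^2$-inequality in time gives $\int_0^\infty\int_\Omega|\nabla u|^2<\infty$. Hence $\int_t^{t+1}\int_\Omega|\nabla u|^2\to0$ as $t\to\infty$.

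\textbf{Step 4: conclusion.} By Poincar\'e's inequality, $\int_t^{t+1}\|u-\bar u\|_{L^2}^2\to0$. Lemma \ref{Rg} makes $t\mapsto u(\cdot,t)$ uniformly H\"older continuous in $L^2$. Hence $\|u(\cdot,t)-\bar u(t)\|_{L^2}\to0$ for the full limit $t\to\infty$, not merely on average. Next, interpolate with the uniform $C^1$ bound from Lemma \ref{Rg} through the Gagliardo--Nirenberg inequality in 2D:
\[
\|f\|_{L^\infty}\le c\|f\|_{W^{1,\infty}}^{1/2}\|f\|_{L^2}^{1/2}+c\|f\|_{L^2}.
\]
This gives $\|u-\bar u\|_{L^\infty}\to0$. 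Combined with Step 2, $\|u(\cdot,t)-u_*\|_{L^\infty}\to0$, and together with Step 1 this proves \eqref{Lu2-c1}.

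Alternatively, Step 4 can be done by a compactness/subsequence argument via Arzel\`a--Ascoli. Either way, the main obstacle is the time-integrability of $\nabla v$ in Step 3, which the $v$-test supplies.
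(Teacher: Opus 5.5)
Your proof is correct, but it is organized differently from the paper's.

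\textbf{The paper's route.} The paper never separates $u$ and $v$. It tests the $u$-equation with $u-u_*$ and uses Poincar\'e together with $\bar u=u_*-\alpha\bar m$ to produce the damping term $\frac{\gamma_0}{2C_p}\int_\Omega(u-u_*)^2$. It then absorbs the terms $\int_\Omega|\nabla v|^2$ and $\int_\Omega|\nabla m|^2$ by adding multiples of the $L^2$-energy inequalities for $v$ and $m$. This gives the functional $Z(t)=\int_\Omega(u-u_*)^2+c_2\int_\Omega v^2+c_2\int_\Omega m^2$ with $Z'+c_4Z\le C\int_\Omega m$. Hence $Z(t)\to0$ by a Gr\"onwall comparison, and $L^\infty$ convergence of both $u$ and $v$ follows by Gagliardo--Nirenberg interpolation against Lemma \ref{Rg}.

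\textbf{Your route.} You do two things differently.
\begin{itemize}
\item For $v$, you get $\|v\|_{L^\infty}\to0$ directly from the comparison/semigroup argument for $z=v+\theta_2m$, reusing the structure of Lemma \ref{Bv1}. No interpolation is needed for $v$.
\item For $u$, you use a Barbalat-type argument. Space-time summability of $|\nabla u|^2$ holds because every source ($um$, $|\nabla m|^2$, and $|\nabla v|^2$ via your $v$-test) is integrable on $(0,\infty)$ by Lemma \ref{Lm}. Uniform time-continuity from Lemma \ref{Rg} then upgrades averaged decay to pointwise decay, and mass conservation handles the mean.
\end{itemize}

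\textbf{What each buys.} Your argument yields extra information, namely $\nabla u\in L^2(\Omega\times(0,\infty))$, and $L^\infty$ decay of $v$ without interpolation. The paper's Gr\"onwall structure needs no time-regularity except the $W^{1,\infty}$ bounds used in the final interpolation. More importantly, it is the structure that survives when $\theta_1>0$ (Lemma \ref{Lu2-a}). There $v\not\to0$, and the term $\theta_1\int_\Omega uv$ (or $\theta_1\int_\Omega(u-u_*)(v-\theta_1u_*)$) in the $v$-test is not integrable in time, so your summability argument breaks down. The coupled functional instead absorbs that cross term through smallness of $\theta_1$.

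\textbf{Minor point.} In Step 1 the first semigroup term should carry $e^{-(t-t_0)}$, not $e^{-t}$; this does not affect the argument.
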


\begin{proof}
    We rewrite the first equation of the system \eqref{sys-1} as
    \begin{equation}\label{Lu2-c2}
        (u - u_{*})_{t} = \Delta (u \gamma (v, m)) + \alpha u m.
    \end{equation}
    Then multiplying \eqref{Lu2-c2} by $u - u_{*}$ and integrating it by parts,  we derive
    \begin{equation*}
        \begin{split}
            & \frac{1}{2} \frac{d}{d t} \int_{\Omega} (u-u_{*})^{2} + \int_{\Omega}\gamma(v, m) |\nabla u|^{2} \\
            & = - \int_{\Omega} \gamma_{v}(v,m) u \nabla u \cdot \nabla v - \int_{\Omega} \gamma_{m}(v,m) u \nabla u \cdot \nabla m + \alpha \int_{\Omega} u m (u-u_{*}), 
        \end{split}
    \end{equation*}
    which, combined with $0 < \gamma_{0} \leq \gamma (v, m) \leq \gamma_{1}$, $|\gamma_{v} (v, m)| \leq \gamma_{2}$, $|\gamma_{m} (v, m)| \leq \gamma_{3}$ in \eqref{Bg1-1} and $\|u(\cdot,t)\|_{L^\infty}\leq c_1$, yields
    \begin{equation*}
        \begin{split}
            &\frac{d}{d t} \int_{\Omega} (u - u_{*})^{2} + 2\gamma_{0} \int_{\Omega} |\nabla u|^{2} \\
            &\leq 2 c_1\gamma_2  \int_\Omega |\nabla u||\nabla v| + 2c_1\gamma_3 \int_\Omega |\nabla u||\nabla m|+ 2 c_1^2 \alpha \int_{\Omega} m\\
            &\leq \gamma_0 \int_\Omega|\nabla u|^2+\frac{2c_1^2\gamma_2^2}{\gamma_0} \int_\Omega |\nabla v|^2 + \frac{2c_1^2\gamma_3^2}{\gamma_0} \int_\Omega |\nabla m|^2 + 2 c_1^2\alpha \int_{\Omega} m,
        \end{split}
    \end{equation*}
    and hence 
    \begin{equation}\label{Lu2-c4}
        \frac{d}{d t} \int_{\Omega} (u - u_{*})^{2} + \gamma_{0} \int_{\Omega} |\nabla u|^{2} \leq \frac{2c_1^2\gamma_2^2}{\gamma_0} \int_\Omega |\nabla v|^2+\frac{2c_1^2\gamma_3^2}{\gamma_0} \int_\Omega |\nabla m|^2+2 c_1^2 \alpha \int_{\Omega} m.
    \end{equation}
    From the Poincar$\acute{\text{e}}$ inequality, there exists a positive constant $C_p$ such that
    \begin{equation*}
        \int_{\Omega} (u - \bar{u})^{2} \leq C_{p} \int_{\Omega} |\nabla u|^2,
    \end{equation*}
    which along with
    \begin{equation*}
        \bar{u} = \frac{1}{|\Omega|} \int_\Omega u = u_*- \frac{\alpha}{|\Omega|} \int_\Omega m = u_*- \alpha \bar{m},
    \end{equation*}
    gives
    \begin{equation*}
        \int_{\Omega} (u - u_{*})^{2} \leq 2 \int_{\Omega} (u - \bar{u})^{2} + 2 \alpha^{2} \int_{\Omega} \bar{m}^2 \leq 2 C_{p} \int_{\Omega} |\nabla u|^{2} + \frac{2 \alpha^{2}}{|\Omega|} \left(\int_{\Omega} m\right)^{2},
    \end{equation*}
    and hence
    \begin{equation}\label{Lu2-c5}
        \frac{\gamma_{0}}{2 C_{p}} \int_{\Omega} (u - u_{*})^{2} \leq \gamma_{0} \int_{\Omega} |\nabla u|^{2} + \frac{\alpha^{2} \gamma_{0}}{C_{p} |\Omega|} \left(\int_{\Omega} m\right)^{2}\leq \gamma_{0} \int_{\Omega} |\nabla u|^{2} + \frac{\alpha^{2} \gamma_{0}\|m_0\|_{L^\infty}}{C_{p} }\int_{\Omega} m.
    \end{equation}
    Therefore, inserting \eqref{Lu2-c5} into \eqref{Lu2-c4}, we infer that
    \begin{equation}\label{Lu2-c6}
        \frac{d}{d t} \int_{\Omega} (u - u_{*})^{2} + \frac{\gamma_{0}}{2 C_{p}} \int_{\Omega} (u - u_{*})^{2} \leq c_2 \int_\Omega (|\nabla v|^2+|\nabla m|^2) + c_3 \int_{\Omega} m.
    \end{equation}
    To estimate the first term on the right hand of \eqref{Lu2-c6}, we first derive from the second and third equation of the system \eqref{sys-1} that
    \begin{equation}\label{Lu2-c7}
        \frac{d}{d t} \int_{\Omega} v^{2}  + \int_{\Omega} v^{2} + 2 \int_{\Omega} |\nabla v|^{2}\leq \theta_{2}^{2} \int_{\Omega} (u m)^{2} \leq c_1^2 \theta_{2}^{2} \|m_0\|_{L^\infty} \int_\Omega m,
    \end{equation}
    and
    \begin{equation}\label{Lu2-c8}
        \frac{d}{d t} \int_{\Omega} m^{2} +\int_{\Omega} m^{2} + 2 \int_{\Omega} |\nabla m|^{2} = - 2 \int_{\Omega} u m^{2} +\int_{\Omega} m^{2} \leq \|m_0\|_{L^\infty} \int_\Omega m.
    \end{equation}
    Thus, multiplying \eqref{Lu2-c7} and \eqref{Lu2-c8} by $c_2$  respectively, and adding them into \eqref{Lu2-c6}, one has 
    \begin{equation}\label{Lu2-c9}
        \begin{split}
            \frac{d}{d t} & \left(\int_{\Omega} (u-u_{*})^{2} + c_2 \int_{\Omega} v^{2} + c_2 \int_{\Omega} m^{2}\right)  + \frac{\gamma_{0}}{2 C_{p}} \int_{\Omega} (u - u_{*})^{2} + c_2 \int_{\Omega} v^{2} + c_2 \int_{\Omega} m^{2} \\
            & \leq (c_3 + c_1^2 c_2 \theta_2^{2} \|m_0\|_{L^\infty} + c_2 \|m_0\|_{L^\infty}) \int_{\Omega} m.
        \end{split} 
    \end{equation}
    Letting $$Z(t) := \int_{\Omega} (u-u_{*})^{2} + c_2\int_{\Omega} v^{2} + c_2 \int_{\Omega} m^{2},$$
    and $c_4=\min \left\{1, \frac{\gamma_{0}}{2 C_{p}}\right\}$, then from \eqref{Lu2-c9} we obtain 
    \begin{equation}\label{Lu2-c10}
        Z'(t) + c_{4} Z(t) \leq (c_3 + c_1^2 c_2 \theta_2^{2} \|m_0\|_{L^\infty} + c_2 \|m_0\|_{L^\infty}) |\Omega| \|m(\cdot,t)\|_{L^\infty}.
    \end{equation}
    Noting that $\|m(\cdot, t)\|_{L^\infty} \to 0$ as $t \to \infty$ (see Lemma \ref{Lmi}), we deduce from \eqref{Lu2-c10} that
    \begin{equation*}
        Z(t) \to 0, \quad \text{as} \quad t \to \infty,
    \end{equation*}
    which implies
    \begin{equation}\label{Lu2-c11}
        \lim\limits_{t\to\infty} (\|u(\cdot,t) - u_{*}\|_{L^2} + \|v(\cdot,t)\|_{L^2})=0.
    \end{equation}
    On the other hand, from Lemma \ref{Rg}, there exists a constant $c_5>0$ such that 
    \begin{equation}\label{Lu2-c12}
        \|u(\cdot,t)\|_{W^{1,\infty}}+\|v(\cdot,t)\|_{W^{1,\infty}}\leq c_5, \quad \text{for} \ t\geq 1.
    \end{equation}
    Then employing the Gagliardo-Nirenberg inequality in conjunction with \eqref{Lu2-c11} and \eqref{Lu2-c12}, we derive \eqref{Lu2-c1}. Hence, we complete the proof of Lemma \ref{Lu2-c}.
\end{proof}

\begin{proof}[\textbf{Proof of Theorem \ref{GBS-1}:  Stabilization}] 
    From Lemma \ref{Lmi} and Lemma \ref{Lu2-c}, we obtain 
    \begin{equation*}
        \lim\limits_{t\to\infty}(\|u(\cdot,t) - u_{*}\|_{L^\infty} + \|v(\cdot,t)\|_{L^\infty} + \|m(\cdot,t)\|_{L^\infty})=0,
    \end{equation*}
    which proves the global asymptotic stability of the constant steady state $(u_*,0,0)$.
\end{proof}
\section{Proof of Theorem \ref{GBS-2}: Boundedness and stabilization with $\theta_1>0$}
In this section, we shall prove the  global boundedness and global stabilization for the system \eqref{sys-1} with $\theta_1>0$ as stated in Theorem \ref{GBS-2}.
To this end, we first derive the upper bound for $v$ in the case of $\theta_1>0$ based on the comparison method developed in \cite{JLZ-2022-CPDE,FJ-2020-JDE}. 

\subsection{Auxiliary functions and properties}
To obtain the lower and upper bound for $v$, we first introduce two auxiliary functions $w$ and $S$, and then present some properties of the auxiliary functions as well as the connection between $v$ and the auxiliary functions $w$ and $S$.

Here the auxiliary function $w(x, t)$ is the unique non-negative solution of the following Helmholtz equation
\begin{equation}\label{Ew-1}
    \begin{cases}
        - \Delta w + w = u, &x \in \Omega, \ t > 0, \\
        \nabla w \cdot \nu= 0, & x \in \partial\Omega, \ t > 0.
    \end{cases}
\end{equation}
Similarly, $S(x, t)$ is the unique non-negative solution of the following system 
\begin{equation*}
    \begin{cases}
        - \Delta S + S = u + \alpha m, & x \in \Omega, \ t > 0, \\
        \nabla S \cdot\nu = 0,  &x \in \partial\Omega, \ t > 0.
    \end{cases}
\end{equation*}
For simplicity, we introduce the operator $\mathcal{A}$ on $L^{2}(\Omega)$ defined by
\begin{equation*}
    \mathcal{A} z \triangleq - \Delta z + z, \quad z \in dom (\mathcal{A})\triangleq \left\{ z \in H^{2}(\Omega) \ : \ \nabla z \cdot \nu = 0 \ \text{on} \ \partial\Omega \right\}.
\end{equation*}
Then $\mathcal{A}$ generates an analytic semigroup on $L^{p}(\Omega)$ and is invertible on $L^{p}(\Omega)$ for all $p \in (1, \infty)$. Moreover, for all $(x,t)\in \Omega \times [0, T_{\max})$, we have 
\begin{equation}\label{Dw}
    w(x,t) = \mathcal{A}^{-1} [u(x,t)] \geq 0, 
\end{equation}
and
\begin{equation}\label{DS}
    S(x,t) = \mathcal{A}^{-1} [(u + \alpha m)(x,t)] \geq 0, 
\end{equation}
where the non-negativity of $S$ and $w$ follows from the non-negativity of $u + \alpha m$ and $u$ by the comparison principle. Define
\begin{equation*}
    S_{0}(x) \triangleq S(x,0) = \mathcal{A}^{-1} [u_{0} + \alpha m_{0}](x) \quad \text{and} \quad w_{0}(x) \triangleq w(x,0) = \mathcal{A}^{-1} [u_{0}](x),
\end{equation*}
and then $S_{0}$ and $w_{0}$ both belong to $W^{3, \infty} (\Omega)$ by the regularity assumption \eqref{ID} on the initial conditions.
Building upon some ideas developed in \cite[Lemma 3.1]{JLZ-2022-CPDE}, we can derive the following key properties for $w$ and $S$.
\begin{lemma}\label{IB}
    Assume that the conditions in Lemma \ref{LS} hold and let $(u,v,m)$ be the solution of \eqref{sys-1} obtained in Lemma \ref{LS}. Then for $(x,t)\in \Omega \times (0, T_{\max})$, the following two key identities hold
    \begin{equation}\label{IB-1}
        S_{t} + u \gamma(v, m) + \alpha m = \mathcal{A}^{-1} [u \gamma(v, m) + \alpha m]
    \end{equation}
    and
    \begin{equation}\label{IB-2}
        w_{t} + u \gamma(v, m) = \mathcal{A}^{-1} [u \gamma(v, m) + \alpha u m].
    \end{equation}
    Moreover, we have
    \begin{equation}\label{IB-3}
        0<w_*\leq  w (x, t) \leq S (x, t) \leq S_{0}(x) e^{\max\left\{1, \gamma_1\right\} t}, \quad (x, t) \in \bar{\Omega} \times [0, T_{\max}),
    \end{equation}
    where $\gamma_1$ is defined in \eqref{Bg1} and $w_*>0$ is a constant independent of $\theta_1$ and $t$.
\end{lemma}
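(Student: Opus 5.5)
The plan is to apply $\mathcal{A}^{-1}$ to the equations for $u$ and for $u+\alpha m$, using that $\mathcal{A}^{-1}$ commutes with time differentiation and that $\Delta = I - \mathcal{A}$ on the Neumann domain.

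\textbf{Identity \eqref{IB-2}.} Write the first equation of \eqref{sys-1} as
\[
u_t = -\mathcal{A}[u\gamma(v,m)] + u\gamma(v,m) + \alpha u m .
\]
The boundary condition $\nabla(u\gamma)\cdot\nu=0$ ensures $u\gamma(v,m)\in \mathrm{dom}(\mathcal{A})$ for $t>0$. Applying $\mathcal{A}^{-1}$ and using $w_t=\mathcal{A}^{-1}u_t$ (justified by the regularity $u\in C^{2,1}$ and the boundedness of $\mathcal{A}^{-1}$ on $L^p$) gives
\[
w_t = -u\gamma + \mathcal{A}^{-1}[u\gamma+\alpha um],
\]
which is \eqref{IB-2}.

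\textbf{Identity \eqref{IB-1}.} Add $\alpha$ times the $m$-equation to the $u$-equation:
\[
(u+\alpha m)_t = \Delta(u\gamma + \alpha m) = -\mathcal{A}[u\gamma+\alpha m] + u\gamma + \alpha m .
\]
The reaction terms $\alpha um$ and $-\alpha um$ cancel. Applying $\mathcal{A}^{-1}$ in the same way yields \eqref{IB-1}.

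\textbf{The bounds \eqref{IB-3}.}
\begin{itemize}
\item[(i)] $w\le S$: since $S-w=\mathcal{A}^{-1}[\alpha m]$ and $\alpha m\ge 0$, the elliptic comparison principle gives $S\ge w$.
\item[(ii)] $w\ge w_*$: by \eqref{LB-2} of Lemma \ref{LB} and \eqref{L1-1},
\[
w\ge C_1\|u(\cdot,t)\|_{L^1}\ge C_1\|u_0\|_{L^1}=:w_*>0 .
\]
This constant depends only on $\Omega$ and $u_0$, hence not on $\theta_1$ or $t$.
\item[(iii)] Upper bound on $S$: from \eqref{IB-1}, drop the nonpositive terms $-u\gamma-\alpha m$. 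By positivity of $\mathcal{A}^{-1}$ together with $\gamma\le\gamma_1$ from \eqref{Bg1},
\[
S_t\le \mathcal{A}^{-1}[\gamma_1 u+\alpha m]\le \max\{1,\gamma_1\}\,\mathcal{A}^{-1}[u+\alpha m]=\max\{1,\gamma_1\}\,S .
\]
Integrating this pointwise linear differential inequality in $t$ gives $S(x,t)\le S_0(x)e^{\max\{1,\gamma_1\}t}$.
\end{itemize}

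The main point needing care is whether \eqref{Bg1} is available when $\theta_1>0$. It is: it uses only $v\ge0$, $\gamma_v\le0$, $\gamma_m\ge0$ and $m\le m^*$, via $\gamma(v,m)\le\gamma(0,m^*)$. It does not require the upper bound on $v$ of Lemma \ref{Bv1}.

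The remaining technical points are routine. One is the interchange $\partial_t\mathcal{A}^{-1}=\mathcal{A}^{-1}\partial_t$, valid since $u_t$ is continuous on $\bar\Omega\times(0,T_{\max})$. The other is continuity of $S$ at $t=0$, so that integrating from $0$ is legitimate; this follows from $u\in C(\bar\Omega\times[0,T_{\max}))$.
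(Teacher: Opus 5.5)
Your proposal is correct and follows essentially the same route as the paper. The paper likewise rewrites $\Delta = I-\mathcal{A}$ in the $u$-equation and in the $(u+\alpha m)$-equation and applies $\mathcal{A}^{-1}$ to obtain \eqref{IB-1}--\eqref{IB-2}. It then uses elliptic comparison with $\gamma\le\gamma_1$ for the exponential bound on $S$, compares $u\le u+\alpha m$ to get $w\le S$, and applies Lemma \ref{LB} with \eqref{L1-1} to get $w_*$. The paper additionally records $S\le w+\alpha m^*$ at this point for later use in Lemma \ref{LvS1}. Your justifications that \eqref{Bg1} holds for $\theta_1>0$, that $\partial_t$ commutes with $\mathcal{A}^{-1}$, and that $S$ is continuous at $t=0$ are correct points the paper leaves implicit.
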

    
\begin{proof}
   First, by the definition of $\mathcal{A}$, we can rewrite the first and the third equation of \eqref{sys-1} as follows
    \begin{equation}\label{IB-4}
        u_{t} = - \mathcal{A} [u \gamma (v, m)] + u \gamma (v, m) + \alpha u m \quad \text{in} \ \Omega \times (0, T_{\max})
    \end{equation}
    and
    \begin{equation}\label{IB-5}
        m_{t} = - \mathcal{A} [m] + m - u m \quad \text{in} \ \Omega \times (0, T_{\max}).
    \end{equation}
    Then multiplying \eqref{IB-5} by $\alpha$ and adding the result into \eqref{IB-4}, we obtain
    \begin{equation}\label{IB-6}
        (u + \alpha m)_{t} + \mathcal{A} [u \gamma (v, m) + \alpha m] = u \gamma (v, m) + \alpha m.
    \end{equation}
    Therefore, the identities \eqref{IB-1} and \eqref{IB-2} follow by taking $\mathcal{A}^{-1}$ on the both sides of \eqref{IB-6} and \eqref{IB-4}, respectively.
    Using the fact $0 < \gamma(v, m) \leq \gamma_1$ in \eqref{Bg1} and the elliptic comparison principle, we infer from \eqref{IB-1} that
    \begin{equation*}
        \begin{split}
            S_{t} \leq S_{t} + u \gamma(v, m) + \alpha m = \mathcal{A}^{-1} [u \gamma(v, m) + \alpha m] & \leq \mathcal{A}^{-1} [\gamma_1 u + \alpha m] \\ 
            & \leq \max{\left\{1, \gamma_1\right\}} \mathcal{A}^{-1} [u + \alpha m] \\
            & = \max{\left\{1, \gamma_1\right\}} S,
        \end{split}
    \end{equation*}
    which yields 
    \begin{equation}\label{IB-1*}
        S (x, t) \leq S_{0}(x) e^{\max\left\{1, \gamma_1\right\} t}, \quad (x, t) \in \bar{\Omega} \times [0, T_{\max}).
    \end{equation}
    Since $u \leq u + \alpha m\leq u+\alpha m^*$ in $\Omega \times (0, T_{\max})$ by \eqref{L1-2}, it follows from \eqref{Dw}, \eqref{DS} and the elliptic comparison principle that
    \begin{equation}\label{BSw}
        w(x,t)\leq S(x,t)\leq w(x,t)+\alpha m^*, \quad (x,t)\in\bar{\Omega} \times [0, T_{\max}).
    \end{equation}
    Noting $0 < \|u_{0}\|_{L^{1}} \leq \|u(\cdot,t)\|_{L^{1}}$ in \eqref{L1-1} and applying Lemma \ref{LB}, we derive from \eqref{Ew-1} that $0< w_*\leq w$, which in conjunction with \eqref{BSw} and \eqref{IB-1*} gives \eqref{IB-3}. Hence, we complete the proof of Lemma \ref{IB}.
\end{proof}

Now we proceed to show that the upper bound of $v$ can be controlled by the auxiliary functions $w$ and $S$. Before that, we first introduce a function
\begin{equation}\label{DG}
    \Gamma^{*}(s) = \int_{1}^{s} \gamma(\eta, m^{*}) d \eta = \int_{1}^{s} \gamma(\eta, \|m_{0}\|_{L^{\infty}}) d \eta, \quad s \geq 0,
\end{equation}
and prove the following result as in \cite[Lemma 2.4]{JLZ-2022-CPDE}. 

\begin{lemma}\label{LGB1}
    Assume that $\gamma$ satisfies (H1). Then for any $\epsilon > 0$, there exists a constant $C_\epsilon> 0$ such that
    \begin{equation}\label{LGB1-1}
        s \gamma (s, m^{*}) - \gamma (s_{0}, m^{*}) \leq \Gamma^{*}(s) \leq \epsilon s + C_\epsilon, \quad s \geq s_{0} \geq 0.
    \end{equation}
\end{lemma}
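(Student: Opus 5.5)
The lemma concerns only the one-variable function $g(\eta):=\gamma(\eta,m^*)$. By (H1), $g$ is positive, non-increasing, and $g(\eta)\to 0$ as $\eta\to\infty$. Both inequalities in \eqref{LGB1-1} will come from elementary integral estimates for $g$, and I will treat them separately.

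\textbf{Lower bound.} Fix $s\ge s_0\ge 0$. I split $\Gamma^*(s)=\int_1^s g$ according to whether $s\ge 1$ or $s<1$.
\begin{itemize}
\item If $s\ge 1$, monotonicity gives $g(\eta)\ge g(s)$ on $[1,s]$. Hence $\Gamma^*(s)\ge (s-1)g(s)=s g(s)-g(s)$. Since $s\ge s_0$ and $g$ is non-increasing, $g(s)\le g(s_0)$, so $\Gamma^*(s)\ge s g(s)-g(s_0)$.
\item If $s<1$, then $\Gamma^*(s)=-\int_s^1 g\ge -(1-s)g(s)$, because $g(\eta)\le g(s)$ for $\eta\ge s$. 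This gives $\Gamma^*(s)\ge s g(s)-g(s)\ge s g(s)-g(s_0)$, again using $g(s)\le g(s_0)$.
\end{itemize}
In both cases the left inequality of \eqref{LGB1-1} holds.

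\textbf{Upper bound.} Fix $\epsilon>0$. Since $g(\eta)\to 0$, there is $s_\epsilon\ge 1$ such that $g(\eta)\le\epsilon$ for all $\eta\ge s_\epsilon$. I again split by the size of $s$.
\begin{itemize}
\item If $s\ge s_\epsilon$, then
\[
\Gamma^*(s)=\int_1^{s_\epsilon}g+\int_{s_\epsilon}^s g\le g(0)(s_\epsilon-1)+\epsilon(s-s_\epsilon)\le \epsilon s+g(0)s_\epsilon .
\]
\item If $1\le s\le s_\epsilon$, then $\Gamma^*(s)\le g(0)s_\epsilon$.
\item If $0\le s<1$, then $\Gamma^*(s)\le 0$.
\end{itemize}
So $C_\epsilon:=g(0)s_\epsilon=\gamma(0,m^*)s_\epsilon$ works for every $s\ge 0$.

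There is no real obstacle here. The only points that need care are the sign of $\Gamma^*$ when $s<1$ and the use of monotonicity to replace $g(s)$ by $g(s_0)$, which is where the hypothesis $s\ge s_0$ enters.
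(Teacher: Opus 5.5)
Your proof is correct and follows essentially the same route as the paper: the lower bound comes from $\Gamma^{*}(s)\ge (s-1)\gamma(s,m^{*})$ together with monotonicity and $s\ge s_0$, and the upper bound comes from splitting the integral at a point $s_\epsilon$ beyond which $\gamma(\cdot,m^{*})\le\epsilon$. The differences are cosmetic: you treat the case $s<1$ of the lower bound explicitly, whereas the paper's one-line inequality covers it implicitly, and your constant $C_\epsilon$ uses $\gamma(0,m^{*})$ where the paper uses $\gamma(1,m^{*})$.
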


\begin{proof}
    Using the conditions $\gamma(s,m)>0$ and $\gamma_s(s,m)\leq 0$ for any $s \geq s_{0} \geq 0$ in (H1), we obtain
    \begin{equation}\label{LGB1-2}
        \Gamma^{*} (s) = \int_{1}^{s} \gamma(\eta, m^{*}) d \eta \geq (s-1) \gamma (s, m^{*}) \geq s \gamma(s, m^{*}) - \gamma(s_{0}, m^{*}).
    \end{equation}
    Next, we shall prove $\Gamma^{*}(s) \leq \epsilon s + C_\epsilon$ for any $\epsilon>0$. To this end, we divide our proof into two cases: $0\leq s<1$ and $s\geq 1$.

    If $0\leq s<1$, from the positivity of $\gamma(s,m)$,
    we can find a positive constant $C_\epsilon$ such that
    \begin{equation}\label{LGB1-3}
        \Gamma^{*}(s) = \int_{1}^{s} \gamma(\eta, m^{*}) d \eta< 0 \leq  \epsilon s + C_\epsilon.
    \end{equation}
    On the other hand, we consider the case $s\geq 1$. From (H1), we have $\lim\limits_{s\to + \infty}\gamma(s,m)=0$. Then for any $\epsilon > 0$ and $m\geq 0$, there exists a constant $s_{\epsilon} > 1$ such that $\gamma (s, m^{*}) \leq \epsilon$ for $s \geq s_{\epsilon}$. Thus, since $\gamma_s(s,m)\leq 0$, we derive that
    \begin{equation}\label{LGB1-4}
        \Gamma^{*} (s) = \int_{1}^{s_{\epsilon}} \gamma(\eta, m^{*}) d \eta + \int_{s_{\epsilon}}^{s} \gamma(\eta, m^{*}) d \eta \leq s_{\epsilon} \gamma (1, m^{*}) + \epsilon (s - s_{\epsilon})_{+} \leq \epsilon s+s_{\epsilon} \gamma (1, m^{*}).
    \end{equation}
    Consequently, the combination of \eqref{LGB1-2}, \eqref{LGB1-3} and \eqref{LGB1-4} gives \eqref{LGB1-1}. Hence, we complete the proof of Lemma \ref{LGB1}.
\end{proof}

Subsequently, in the case $\theta_{1} > 0$, we establish an upper bound for $v$ in terms of $S$ through an application of the comparison principle. More precisely, we obtain the following result:
\begin{lemma}\label{LvS1}
    Let $(u,v,m)$ be the solution of \eqref{sys-1} obtained in Lemma 2.1 and $\theta_{1} > 0$, $\theta_{2} \geq 0$. 
    Assume that $\gamma$ satisfies the assumption (H1).
    Then for any $\varepsilon>0$, there exists a constant $K_\varepsilon>0$ independent of $\theta_1$ and $t$ such that
    \begin{equation}\label{LvS1-1}
        v \leq (1 + \varepsilon) (\theta_1+\theta_2 m^*) S + (1 + \theta_1) K_\varepsilon, \quad \text{in} \ \bar{\Omega} \times [0, T_{\max}).
    \end{equation}
    Moreover, we can find a positive constant $\mathcal{K}_1$ independent of $\theta_1$ and $t$ such that 
    \begin{equation}\label{LvSB}
        v\leq (1+\theta_1)\mathcal{K}_1 w\leq (1+\theta_1)\mathcal{K}_1 S,\quad \text{in} \ \bar{\Omega} \times [0, T_{\max}).
    \end{equation}
\end{lemma}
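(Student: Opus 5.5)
The plan is a direct parabolic comparison for $v$ against a multiple of $S$, with no integral estimates. Set $\Theta:=\theta_1+\theta_2 m^*$ and
\[
z:=v-(1+\varepsilon)\Theta S .
\]
The idea is to compute the parabolic operator $z_t-\Delta z+z$, use the identity \eqref{IB-1} to eliminate $S_t$, and show that the resulting right-hand side is non-positive wherever $v$ is large. A maximum-principle argument then bounds $z$ by a constant.

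\textbf{Step 1: compute the operator on $z$.} The elliptic equation for $S$ gives $-\Delta S+S=u+\alpha m$. Hence
\[
S_t-\Delta S+S=S_t+u+\alpha m=u\bigl(1-\gamma(v,m)\bigr)+\mathcal{A}^{-1}[u\gamma(v,m)+\alpha m],
\]
where the second equality is \eqref{IB-1}. The last term is non-negative, because $\mathcal{A}^{-1}$ preserves non-negativity. Combining this with the $v$-equation and with $(\theta_1+\theta_2m)u\le \Theta u$ (from \eqref{L1-2}), I expect
\[
z_t-\Delta z+z\le \Theta u-(1+\varepsilon)\Theta u\bigl(1-\gamma(v,m)\bigr)=\Theta u\bigl[(1+\varepsilon)\gamma(v,m)-\varepsilon\bigr].
\]

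\textbf{Step 2: sign of the right-hand side for large $v$.} Since $\gamma_m\ge0$ and $m\le m^*$, we have $\gamma(v,m)\le\gamma(v,m^*)$. By (H1), $\gamma(\cdot,m^*)\to0$ at infinity. So there is $s_\varepsilon$, depending only on $\varepsilon$, $m^*$ and $\gamma$ (in particular not on $\theta_1$), such that $(1+\varepsilon)\gamma(v,m^*)\le\varepsilon$ whenever $v\ge s_\varepsilon$. This is the same threshold as in the proof of Lemma \ref{LGB1}. Consequently $z$ is a subsolution,
\[
z_t-\Delta z+z\le0,
\]
on the set $\{v\ge s_\varepsilon\}$.

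\textbf{Step 3: maximum principle.} Let $M:=\max\{\|v_0\|_{L^\infty},s_\varepsilon\}$; I claim $z\le M$. Note $z\le v<s_\varepsilon$ on $\{v<s_\varepsilon\}$, because $S\ge0$. Suppose $z$ exceeds $M$. Consider the first time and point at which $z$ reaches a level slightly above $M$, or argue with $(z-M)_+$ via a Stampacchia-type energy estimate, using the Neumann boundary conditions for both $v$ and $S$. At such a point $v\ge z>s_\varepsilon$, so the subsolution inequality of Step 2 holds there. This contradicts the maximum principle, since the zero-order term $+z$ is positive there.

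This yields $v\le(1+\varepsilon)\Theta S+M$. Since $M$ does not depend on $\theta_1$, taking $K_\varepsilon:=M$ gives \eqref{LvS1-1}, the factor $(1+\theta_1)\ge1$ being harmless. The main obstacle is making Step 3 rigorous. The region where the subsolution property holds depends on the solution itself, and a smooth-enough argument is needed at $t=0$ and at the boundary. I would follow the first-touching-point version, using the $C^{2,1}$ regularity from Lemma \ref{LS}.

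\textbf{Step 4: the bound \eqref{LvSB}.} Fix $\varepsilon=1$. Using \eqref{BSw}, $S\le w+\alpha m^*$, and $\Theta\le(1+\theta_1)(1+\theta_2m^*)$, we get
\[
v\le(1+\theta_1)\bigl[2(1+\theta_2m^*)(w+\alpha m^*)+K_1\bigr].
\]
The lower bound $w\ge w_*>0$ from Lemma \ref{IB} holds with $w_*$ independent of $\theta_1$. It lets us absorb the constants into a multiple of $w$, namely $c\le (c/w_*)w$. This gives $v\le(1+\theta_1)\mathcal{K}_1w$, and $w\le S$ from \eqref{IB-3} completes \eqref{LvSB}.
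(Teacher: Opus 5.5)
Your proof is correct. Steps 1--3 take a genuinely different route from the paper; Step 4 coincides with the paper's argument.

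\textbf{The paper's route.} The paper never compares $v$ directly with a multiple of $S$. It uses \eqref{IB-1} only in the form $-(\theta_1+\theta_2m)S_t\le(\theta_1+\theta_2m)(u\gamma(v,m)+\alpha m)$. It then controls the resulting term $\gamma(v,m^*)(\theta_1+\theta_2m)u=\gamma(v,m^*)\mathcal{L}v$ through the auxiliary function $\Gamma^*(v)=\int_1^v\gamma(\eta,m^*)\,d\eta$, discarding $\gamma_v|\nabla v|^2$ via $\gamma_v\le0$. A single global comparison gives \eqref{LvS1-9}, namely $v\le(\theta_1+\theta_2m^*)S+\Gamma^*(v)+(1+\theta_1)M_2$. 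Only at the end does it absorb $\Gamma^*(v)\le\epsilon v+C_\epsilon$ using Lemma \ref{LGB1}.

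\textbf{Your route.} You pay for the $\gamma$-term with an $\varepsilon$ of the coefficient of $S$. The exact formula $\mathcal{L}S=u(1-\gamma)+\mathcal{A}^{-1}[u\gamma+\alpha m]$ makes the $\alpha m$ contributions cancel. It leaves $\mathcal{L}z\le\Theta u[(1+\varepsilon)\gamma-\varepsilon]$, which has a favourable sign on $\{v\ge s_\varepsilon\}$.

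\textbf{What each route buys.} Your route gains the following:
\begin{itemize}
\item You need neither $\Gamma^*$ nor Lemma \ref{LGB1}.
\item Steps 1--3 never use $\gamma_v\le0$; besides $\gamma>0$, only $\gamma_m\ge0$ and $\gamma(\cdot,m^*)\to0$ enter.
\item You get $K_\varepsilon=\max\{\|v_0\|_{L^\infty},s_\varepsilon\}$ without the factor $1+\theta_1$.
\item You keep the constant coefficient $\Theta$ throughout. This avoids a delicate point in the paper: in \eqref{LvS1-3}, $K$ is chosen pointwise according to the sign of $\mathcal{L}S$, which need not be fixed when $\gamma$ exceeds $1$. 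In \eqref{LvS1-5}, $K$ is then moved inside $\mathcal{L}$ as though it were constant.
\end{itemize}
The paper's route gains a global comparison with no truncation. It also gives the sharper intermediate bound \eqref{LvS1-9} with coefficient exactly $\theta_1+\theta_2m^*$, which loses nothing when $\Gamma^*$ is bounded (e.g. exponentially decaying $\gamma$). For the strict mass condition used later, your $\varepsilon$-loss is harmless.

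\textbf{Making Step 3 rigorous.} The Stampacchia version is the cleanest way to do it.
\begin{itemize}
\item On $\{z>M\}$ you have $v\ge z>s_\varepsilon$ because $S\ge0$, so $\mathcal{L}(z-M)=\mathcal{L}z-M\le-M\le0$ there.
\item Test with $(z-M)_+$ and use $\partial_\nu v=\partial_\nu S=0$. This gives $\frac{1}{2}\frac{d}{dt}\int_\Omega(z-M)_+^2+\int_\Omega|\nabla(z-M)_+|^2+\int_\Omega(z-M)_+^2\le0$.
\item Since $z(\cdot,0)\le v_0\le M$, it follows that $(z-M)_+\equiv0$.
\end{itemize}
The regularity required is available: $S(\cdot,t)\in W^{2,p}$, and $S_t$ is given explicitly by \eqref{IB-1}. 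So no touching-point analysis at $\partial\Omega$ or at $t=0$ is needed.
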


\begin{proof}
    Introducing the parabolic operator
    \begin{equation*}
        \mathcal{L} z \triangleq \partial_{t} z + \mathcal{A} z = \partial_{t} z - \Delta z + z.
    \end{equation*}
    Then from the second equation of \eqref{sys-1} and $-\Delta S+S=u + \alpha m $, we can derive that
    \begin{equation*}
        \mathcal{L} v = (\theta_{1} + \theta_{2} m) u \leq (\theta_{1} + \theta_{2}m) (u + \alpha m) = (\theta_{1} + \theta_{2}m) (\mathcal{L} S - \partial_{t} S),
    \end{equation*}
    which, combined with \eqref{L1-2}, \eqref{IB-1} and the fact $\mathcal{A}^{-1} [u \gamma(v, m) + \alpha m]\geq 0$ by the elliptic comparison principle, gives
    \begin{equation}\label{LvS1-2}
        \begin{split}
            \mathcal{L} v & \leq (\theta_{1} + \theta_{2}m) \mathcal{L} S -(\theta_{1} + \theta_{2} m)\partial_{t} S\\
            &=  (\theta_{1} + \theta_{2}m) \mathcal{L} S + (\theta_{1} + \theta_{2}m) (u \gamma(v, m) + \alpha m) - (\theta_{1} + \theta_{2}m) \mathcal{A}^{-1} [u \gamma(v, m) + \alpha m]\\
            &\leq (\theta_{1} + \theta_{2} m) \mathcal{L} S + (\theta_{1} + \theta_{2} m) (u \gamma(v, m) + \alpha m)\\
            &\leq  (\theta_{1} + \theta_{2} m) \mathcal{L} S +\gamma(v, m^*)(\theta_{1} + \theta_{2} m)u+\alpha(\theta_{1} + \theta_{2} m^*) m^{*}.
        \end{split}
    \end{equation}
    For any $(x,t) \in \Omega \times (0,T_{\max})$, letting
    \begin{equation*}
        K = \left\{
        \begin{aligned}
            & \theta_{1}, \quad && \text{if} \ \mathcal{L} S \leq 0, \\
            & \theta_{1} + \theta_{2} m^{*}, \quad && \text{if} \ \mathcal{L} S \geq 0,
        \end{aligned}
        \right.
    \end{equation*}
    we deduce that
    \begin{equation}\label{LvS1-3}
        (\theta_{1} + \theta_{2}m) \mathcal{L} S \leq K \mathcal{L} S.
    \end{equation}
    Moreover, using the second equation of \eqref{sys-1}, the assumption (H1) on $\gamma$ and the definition of $\Gamma^{*}$ in \eqref{DG}, we obtain
    \begin{equation*}
        \gamma(v, m^*)(\theta_{1} + \theta_{2} m)u = \gamma(v, m^{*}) (\partial_{t} v - \Delta v + v) = \mathcal{L} \Gamma^{*}(v) + \gamma_{v}(v, m^{*}) | \nabla v |^{2} + (v \gamma(v, m^{*}) - \Gamma^{*} (v)),
    \end{equation*}
    which, together with Lemma \ref{LGB1} (with $s_{0} = 0$) and $\gamma_{v}(v, m^{*})\leq 0$ by (H1), yields
    \begin{equation}\label{LvS1-4}
        \gamma(v, m^*)(\theta_{1} + \theta_{2} m)u \leq \mathcal{L} \Gamma^{*}(v) + \gamma(0,m^*).
    \end{equation}
    Then substituting \eqref{LvS1-3} and \eqref{LvS1-4} into \eqref{LvS1-2}, one has
    \begin{equation}\label{LvS1-5}
        \mathcal{L} v \leq \mathcal{L} [K S + \Gamma^{*}(v)] + \gamma(0, m^{*}) + \alpha m^* (\theta_1 + \theta_2 m^*) = \mathcal{L} [K S + \Gamma^{*}(v)+M],
    \end{equation}
    where $$M= \gamma(0, m^{*}) + \alpha m^* (\theta_1 + \theta_2 m^*).$$
    Therefore, letting $M_1=\|v_0\|_{L^\infty}+M$, we infer from \eqref{LvS1-5} and the boundary condition that 
    \begin{equation}\label{LvS1-6}
        \mathcal{L}v\leq \mathcal{L} [K S + \Gamma^{*}(v)+M]\leq \mathcal{L}[K S + \Gamma^{*}(v)+M_1], \quad \text{in} \ \Omega \times (0,T_{\max}),
    \end{equation}
    and
    \begin{equation}
        \nabla v \cdot \nu = \nabla(K S + \Gamma^{*}(v)+M_1) \cdot \nu = 0, \quad \text{on} \ \partial \Omega \times (0,T_{\max}),
    \end{equation}
    as well as
    \begin{equation}\label{LvS1-8}
        v_0\leq \|v_0\|_{L^\infty}\leq K S_0 + \Gamma^{*}(v_0)+M_1, \quad \text{in} \ \bar{\Omega}.
    \end{equation}
    Applying the standard parabolic comparison principle to \eqref{LvS1-6}-\eqref{LvS1-8}, it follows that
    \begin{equation*}
        v(x, t) \leq K S(x, t) + \Gamma^{*}(v(x, t)) +M_1, \quad (x, t) \in \bar{\Omega} \times [0, T_{\max}),
    \end{equation*}
    and hence
    \begin{equation}\label{LvS1-9}
        v(x, t) \leq (\theta_1+\theta_2 m^*) S(x, t) + \Gamma^{*}(v(x, t)) +(1+\theta_1) M_2, \quad (x, t) \in \bar{\Omega} \times [0, T_{\max}),
    \end{equation}
    where $M_2=\|v_0\|_{L^\infty} + \gamma(0, m^{*}) + \alpha m^* (1 + \theta_2 m^*)$.
    From Lemma \ref{LGB1}, we know that for any $\epsilon\in(0,1)$, it holds that 
    \begin{equation}\label{LvS1-10}
        \Gamma^{*}(v(x,t)) \leq \epsilon v(x,t) + C_\epsilon, \quad (x,t) \in \bar{\Omega} \times [0,T_{\max}).
    \end{equation}
    Thus, substituting \eqref{LvS1-10} into \eqref{LvS1-9}, we obtain
    \begin{equation*}
        v(x, t) \leq \frac{1}{1-\epsilon}(\theta_1+\theta_2 m^*) S(x, t) +\frac{(1+\theta_1)}{1-\epsilon}(M_2+C_\epsilon),
    \end{equation*}
    which implies \eqref{LvS1-1} by letting $\varepsilon=\frac{\epsilon}{1-\epsilon}$ and
    \begin{equation*}
        K_\varepsilon=(1+\varepsilon)(M_2+C_\epsilon).
    \end{equation*}
    Next, we shall prove \eqref{LvSB}. In fact, letting $\varepsilon=1$ in \eqref{LvS1-1} and noting the fact $0<w_*\leq w\leq S\leq w+\alpha m^*$ from \eqref{IB-3} and \eqref{BSw}, we show that 
    \begin{equation*}
        \begin{split}
            v &\leq 2(\theta_1+\theta_2 m^*) S + (1 + \theta_1) K_1 \\
            &\leq 2(\theta_1+\theta_2 m^*) (w+\alpha m^*)+(1 + \theta_1) K_1\\
            & \leq (1+\theta_1)\left[2(1+\theta_2m^*)+\frac{2(1+\theta_2m^*)\alpha m^* +K_1}{w_*}\right]w \\
            &\leq \frac{2(1+\theta_2 m^*)(w_*+\alpha m^*)+K_1}{w_*}(1+\theta_1)w,
        \end{split}
    \end{equation*}
    which gives \eqref{LvSB} by $w\leq S$ in \eqref{BSw} and letting 
    \begin{equation*}
        \mathcal{K}_1:=\frac{2(1+\theta_2 m^*)(w_*+\alpha m^*)+K_1}{w_*}.
    \end{equation*}
\end{proof}   
\subsection{Global boundedness: $\theta_1>0$} In this subsection, we shall prove the global boundedness of classical solutions for the system \eqref{sys-1} with $\theta_1 > 0$. To achieve this, the key point is to find a positive upper bound for $v$. To this end, we first show that if $\gamma(v, m)$ decays exponentially for all $m \geq 0$ as $v \to \infty$ and if the mass of the initial data is less than a  value, then there exists a constant $C > 0$ independent of $t$ such that $\|v(\cdot, t)\|_{L^\infty} \leq C$. However, by Lemma \ref{LvS1}, the boundedness of $\|v(\cdot, t)\|_{L^\infty}$ can be derived if $\|w(\cdot, t)\|_{L^\infty} \leq C$, where $w(\cdot, t) = \mathcal{A}^{-1}[u(\cdot, t)]$ satisfies  
\begin{equation}\label{WE}
    \begin{cases}
        \partial_{t} w + u \gamma(v, m)  = \mathcal{A}^{-1} [u \gamma(v, m) + \alpha u m], \quad & x \in \Omega, \ t >0, \\ 
        - \Delta w + w = u, \quad & x \in \Omega, \ t >0, \\ 
        \nabla w \cdot \nu = 0, \quad & x \in \partial \Omega, \ t >0,\\
	w(x,0) = w_{0}(x), \quad & x \in \Omega.
    \end{cases}
\end{equation}

\begin{lemma}\label{L2B1}
    Let $(u, v, m)$ be the solution of system \eqref{sys-1} with $\theta_1>0$ obtained in Lemma \ref{LS}. Assume that $\gamma$ satisfies (H1) and there is $\chi>0$ such that
    \begin{equation}\label{L2B1-1}
        \liminf_{s \to + \infty} e^{\chi s} \gamma (s, m) > 0, \quad \text{for all } m \geq 0.
    \end{equation}
    And suppose that the initial data satisfies \eqref{ID} and 
    \begin{equation}\label{L2B1-2}
        \|u_{0} + \alpha m_{0}\|_{L^{1}} < \frac{4\pi}{\chi (\theta_1 + \theta_2 m^*)}.
    \end{equation}
    Then there exist two positive constants $C_{1}$ and $C_2$, independent of $\theta_1$ and $t$, such that
     \begin{equation}\label{L2B1-3}
     \int_{t}^{t + \tau} \int_{\Omega} u^{2} \gamma(v, m) d x d s\leq C_{1} e^{C_2 \theta_1}, \quad \text{for} \ t\in (0,\widetilde{T}_{\max}),
    \end{equation}
    where $\tau$ and $\widetilde{T}_{\max}$ are defined in \eqref{DT}.
\end{lemma}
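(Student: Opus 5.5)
We follow the energy method for the auxiliary function $w$ from \eqref{WE}, in the spirit of \cite{FJ-2021-CVPDE,JLZ-2022-CPDE}. We test the first equation of \eqref{WE} with $u=-\Delta w+w$ and integrate over $\Omega$:
\[
\frac12\frac{d}{dt}\int_\Omega\big(|\nabla w|^2+w^2\big)+\int_\Omega u^2\gamma(v,m)=\int_\Omega u\,\mathcal A^{-1}[u\gamma(v,m)+\alpha um].
\]
The right-hand side is treated by self-adjointness of $\mathcal A^{-1}$. The term $\alpha\int_\Omega w\,um\le \alpha m^*\int_\Omega uw$ is handled using \eqref{LB-3}, \eqref{L1-1} and $\|w\|_{L^q}\le C$. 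The main term $\int_\Omega w\,u\gamma(v,m)$ is split with Young's inequality:
\[
\int_\Omega w u\gamma \le \frac12\int_\Omega u^2\gamma+\frac12\int_\Omega w^2\gamma(v,m).
\]
Since $\gamma\le\gamma(0,m^*)$ is bounded and $\|w\|_{L^2}\le C$ by \eqref{LB-3}, the last piece is bounded. This gives
\[
\frac{d}{dt}\|w\|_{H^1}^2+\int_\Omega u^2\gamma(v,m)\le C.
\]
Integrating over $(t,t+\tau)$ then yields \eqref{L2B1-3}, provided $\|w(\cdot,t)\|_{H^1}^2$ is bounded uniformly in $t$, with a constant of the form $C_1e^{C_2\theta_1}$.

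The main obstacle is the uniform bound on $\|\nabla w\|_{L^2}$, and this is where \eqref{L2B1-1} and \eqref{L2B1-2} enter. Instead of the Young splitting above, we exploit the dissipation more carefully: $\int_\Omega u^2\gamma\ge \int_\Omega u^2\,c\,e^{-\chi v}$ for large $v$ by \eqref{L2B1-1}. Here we use monotonicity in $m$: $\gamma(v,m)\ge\gamma(v,0)$ since $\gamma_m\ge0$, so we apply \eqref{L2B1-1} with $m=0$ and obtain $\gamma(v,m)\ge c_0e^{-\chi v}$ for all $v\ge0$. The key point is to get dissipation for $\|u\|^2$ in some weaker sense. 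Cauchy--Schwarz gives
\[
\Big(\int_\Omega u\Big)^2\le \int_\Omega u^2\gamma\cdot\int_\Omega \gamma^{-1}\le c_0^{-1}\int_\Omega u^2\gamma\int_\Omega e^{\chi v}.
\]
By \eqref{LvS1-1}, $v\le(1+\varepsilon)(\theta_1+\theta_2m^*)S+(1+\theta_1)K_\varepsilon$, and hence
\[
\int_\Omega e^{\chi v}\le e^{\chi(1+\theta_1)K_\varepsilon}\int_\Omega e^{\chi(1+\varepsilon)(\theta_1+\theta_2m^*)S}.
\]
Since $S=\mathcal A^{-1}[u+\alpha m]$ has mass $\Lambda=\|u_0+\alpha m_0\|_{L^1}$ and \eqref{L2B1-2} is strict, we may pick $\varepsilon$ small with $\sigma=\chi(1+\varepsilon)(\theta_1+\theta_2m^*)<4\pi/\Lambda$. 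Then \eqref{LB-4} bounds this integral by $C_3e^{C_4\sigma\Lambda}$, uniformly in $t$. This is exactly the source of the factor $e^{C_2\theta_1}$.

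To close the estimate with something coercive on $\|w\|_{H^1}^2=\int_\Omega uw$, we use a weighted version of the same Cauchy--Schwarz argument:
\[
\int_\Omega uw\le\Big(\int_\Omega u^2\gamma\Big)^{1/2}\Big(\int_\Omega w^2\gamma^{-1}\Big)^{1/2}.
\]
We bound $\int_\Omega w^2e^{\chi v}$ by Hölder, using $\|w\|_{L^q}$ from \eqref{LB-3} together with the exponential integrability just obtained; for this we need slightly more room, $\sigma(1+\delta)<4\pi/\Lambda$, which the strict inequality allows. This yields $\int_\Omega u^2\gamma\ge c\,e^{-C\theta_1}\big(\int_\Omega uw\big)^2$. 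Combined with the energy identity, we obtain a differential inequality $y'+c e^{-C\theta_1}y^2\le C$ with $y=\|w\|_{H^1}^2$, which gives $y\le \max\{y(0),Ce^{C\theta_1}\}$ by an ODE comparison. Integrating the energy identity over $(t,t+\tau)$ then gives \eqref{L2B1-3}. We expect the delicate bookkeeping to lie in keeping all constants independent of $\theta_1$ except through the exponent coming from $K_\varepsilon$ and \eqref{LB-4}.
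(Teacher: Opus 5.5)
Your proposal is correct and is essentially the paper's argument. Both test the $w$-equation with $u$ and hinge on the bound $\int_\Omega w^2\gamma^{-1}(v,0)\le Ce^{C\theta_1}$, obtained from $\gamma^{-1}(s,0)\lesssim e^{\chi s}$, the comparison \eqref{LvS1-1}, H\"older's inequality with \eqref{LB-3}, and \eqref{LB-4}; the only difference is that the paper gets linear damping by adding $\|\nabla w\|_{L^2}^2+\|w\|_{L^2}^2=\int_\Omega uw$ and using Young's inequality, whereas you get quadratic damping from $y^2\le\int_\Omega u^2\gamma\cdot\int_\Omega w^2\gamma^{-1}$ and an ODE comparison. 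One slip: $\alpha\int_\Omega wum\le\alpha m^*\int_\Omega uw=\alpha m^* y$ cannot be controlled by $\|u\|_{L^1}$ and $\|w\|_{L^q}$ with $q<\infty$ as you claim, so your inequality should read $y'+ce^{-C\theta_1}y^2\le C+2\alpha m^*y$, which still closes because the linear term is absorbed into the quadratic one.
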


\begin{proof} 
    We shall prove this lemma based on some ideas in  \cite{FJ-2021-CVPDE}. In fact, using the facts that $w(\cdot,t) = \mathcal{A}^{- 1} [u(\cdot,t)]$ in \eqref{Dw} and $\mathcal{A}$ is a self-adjoint operator, we multiply the first equation of \eqref{WE} by $u$ and integrate it by parts such that
    \begin{equation}\label{L2B1-5}
        \begin{split}
            \frac{1}{2} \frac{d}{d t} (\|\nabla w\|_{L^{2}}^{2} + \|w\|_{L^{2}}^{2}) + \int_{\Omega} \gamma(v, m) u^{2} &= \int_{\Omega} \mathcal{A}^{- 1}[\alpha u m + u \gamma(v, m)] u \\
            &= \int_{\Omega} (\alpha u m + u \gamma(v, m)) w\\
            &\leq (\alpha m^* + \gamma_1)\int_{\Omega} u w,
        \end{split}
    \end{equation}
    by combining $\|m(\cdot,t)\|_{L^\infty}\leq m^*$ in \eqref{L1-2} with  $0<\gamma(v,m)\leq \gamma_1$ in \eqref{Bg1}.
    On the other hand, multiplying $-\Delta w +w=u$ by $w$ and integrating the result by parts, we derive
    \begin{equation}\label{L2B1-6}
        \|\nabla w\|_{L^{2}}^{2} + \|w\|_{L^{2}}^{2} = \int_{\Omega} u w.
    \end{equation}
    Therefore, adding \eqref{L2B1-5} into \eqref{L2B1-6}, and applying Young's inequality, we have
    \begin{equation*}
        \begin{split}
            & \frac{1}{2} \frac{d}{d t} (\|\nabla w\|_{L^{2}}^{2} + \|w\|_{L^{2}}^{2}) + (\|\nabla w\|_{L^{2}}^{2} + \|w\|_{L^{2}}^{2}) + \int_{\Omega} \gamma(v, m) u^{2} \\
            & \leq (\alpha m^* + \gamma_1 + 1) \int_{\Omega} u w \\
            & \leq \frac{1}{2} \int_{\Omega} \gamma(v, m) u^{2} + \frac{(\alpha m^* + \gamma_1 + 1)^2}{2} \int_{\Omega} \frac{w^2}{\gamma(v, m)},
        \end{split}
    \end{equation*}
    which, in conjunction with $\gamma(v,m)\geq \gamma(v,0)>0$ by the non-negativity of $m$ and the assumption (H1), gives
    \begin{equation}\label{L2B1-7}
        \frac{d}{d t} (\|\nabla w\|_{L^{2}}^{2} + \|w\|_{L^{2}}^{2}) + (\|\nabla w\|_{L^{2}}^{2} + \|w\|_{L^{2}}^{2}) + \int_{\Omega} \gamma(v, m) u^{2} \leq c_{1} \int_{\Omega} \frac{w^2}{\gamma(v, 0)},
    \end{equation}
    where $c_{1} = (\alpha m^* + \gamma_1 + 1)^2$. 
    Noting that $\gamma$ satisfies the assumptions \eqref{L2B1-1} and (H1), then we fix $\eta \geq 0$, there exist $b > 0$ and $s_b>0$ 
    depending on $b$ and $\chi$ such that
    \begin{equation}\label{L2B1-8}
        \gamma^{- 1}(s, \eta) \leq b e^{\chi s} + \gamma^{-1} (s_{b}, \eta), \quad \text{for all} \ s\geq 0. 
    \end{equation}
    From \eqref{L2B1-8} and $v \leq (1 + \varepsilon) (\theta_1+\theta_2 m^*) S + (1 + \theta_1) K_\varepsilon$ for any $\varepsilon>0$ in \eqref{LvS1-1}, we deduce that 
    \begin{equation}\label{L2B1-9}
        \begin{split}
            \int_{\Omega} \frac{w^2}{\gamma(v, 0)} &\leq \int_{\Omega} [b e^{\chi v} + \gamma^{-1} (s_b, 0)] w^{2} \\
            &\leq \int_{\Omega} [b e^{\chi (1+\varepsilon) (\theta_1+\theta_2m^*) S + \chi (1+\theta_1)K_\varepsilon} + \gamma^{-1} (s_b, 0)] w^{2}\\
            &\leq be^{\chi (1+\theta_1)K_\varepsilon}\left(\int_{\Omega} e^{p \chi (1+\varepsilon) (\theta_1+\theta_2m^*) S}\right)^{\frac{1}{p}} \left(\int_\Omega w^{2p'}\right)^{\frac{1}{p'}} + \gamma^{-1} (s_b, 0) \int_{\Omega} w^{2},
        \end{split}
    \end{equation}
    where $p>1$ is close to 1 satisfying $\frac{1}{p}+\frac{1}{p'}=1$. Combining $\|u(\cdot,t)\|_{L^1}\leq \|u_0+\alpha m_0\|_{L^1}$ in \eqref{L1-1} with the fact that $w$ satisfies \eqref{Ew-1}, we invoke \eqref{LB-3} in Lemma \ref{LB} such that  
    \begin{equation}\label{L2B1-9*}
        \left(\int_\Omega w^{2p'}\right)^{\frac{1}{p'}}\leq c_2 \quad \text{and} \quad \int_\Omega w^2\leq c_2.
    \end{equation}
    On the other hand, noting that $\|S(\cdot,t)\|_{L^1}=\|u_0+\alpha m_0\|_{L^1}:=\Lambda$ and \eqref{L2B1-2}, we can find some sufficient small $\varepsilon>0$ and fix $p>1$ close to 1 such that
    \begin{equation*}
        (1+\varepsilon) p \Lambda<\frac{4\pi}{\chi(\theta_1+\theta_2 m^*)}
    \end{equation*}
    and hence 
    \begin{equation}\label{L2B1-10}
        \chi (1+\varepsilon) (\theta_1+\theta_2m^*) p < \frac{4 \pi}{\Lambda}.
    \end{equation}
    Since \eqref{L2B1-10} holds, we infer from \eqref{LB-4} in Lemma \ref{LB} that
    \begin{equation}\label{L2B1-11}
        \left(\int_{\Omega} e^{p \chi (1+\varepsilon) (\theta_1+\theta_2m^*) S}\right)^\frac{1}{p}\leq c_3. 
    \end{equation}
    Therefore, inserting \eqref{L2B1-9*} and \eqref{L2B1-11} into \eqref{L2B1-9}, one has
    \begin{equation*}
        \int_{\Omega} \frac{w^2}{\gamma(v, 0)} \leq c_{4} e^{c_5 \theta_1},
    \end{equation*}
    which, substituted it into \eqref{L2B1-7}, gives a constant $c_{6}> 0$ independent of $t$ and $\theta_1$ such that
    \begin{equation}\label{L2B1-12}
        \frac{d}{d t} (\|\nabla w\|_{L^{2}}^{2} + \|w\|_{L^{2}}^{2}) + (\|\nabla w\|_{L^{2}}^{2} + \|w\|_{L^{2}}^{2}) + \int_{\Omega} \gamma(v, m) u^{2} \leq c_{6} e^{c_5\theta_1}.
    \end{equation}
    Letting $Z(t) = \|\nabla w\|_{L^{2}}^{2} + \|w\|_{L^{2}}^{2}$ and $h(t) = c_{6}e^{c_5 \theta_1}$ and employing \eqref{L2B1-12}, we obtain
    \begin{equation*}
        Z'(t) + Z(t) \leq h(t) \quad \text{for all} \ t \in (0, T_{\max}),
    \end{equation*}
    which implies
    \begin{equation}\label{L2B1-13}
        \|\nabla w(\cdot,t)\|_{L^{2}} + \|w(\cdot,t)\|_{L^{2}} \leq c_{7}e^{c_8\theta_1},
    \end{equation}
    where $c_{7}$ and $c_8$ are positive constants independent of $t$ and $\theta_1$.
    
    Moreover, integrating \eqref{L2B1-12} over the interval $(t,t+\tau)$ and using \eqref{L2B1-13}, we derive \eqref{L2B1-3} directly. Then we complete the proof of Lemma \ref{L2B1}.
\end{proof}

\begin{remark}
    If $\gamma(v,m)$ satisfies \eqref{L2B1-1} for any $\chi>0$, then we can directly obtain \eqref{L2B1-11} and hence \eqref{L2B1-3} holds without any smallness assumption on the initial data.
\end{remark}

\begin{lemma}\label{L2B2}
    Under the assumptions in Lemma \ref{L2B1}, it follows that
    \begin{equation}\label{L2B2-1}
         \|v(\cdot, t)\|_{L^{\infty}}\leq C_3 (1+\theta_1) e^{C_4\theta_1}, \quad \text{for} \ t\in (0,T_{\max}),
    \end{equation}
    where $C_3$ and $C_4$ are positive constants independent of $\theta_1$ and $t$.  
\end{lemma}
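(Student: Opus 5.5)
By \eqref{LvSB} in Lemma \ref{LvS1}, $v\le (1+\theta_1)\mathcal{K}_1 w$. It therefore suffices to prove
\begin{equation*}
\|w(\cdot,t)\|_{L^\infty}\le c\,e^{c'\theta_1}\quad\text{for } t\in(0,T_{\max}),
\end{equation*}
with $c,c'$ independent of $\theta_1$ and $t$. The plan is to follow the comparison strategy of \cite{FJ-2021-CVPDE,JLZ-2022-CPDE} applied to the key identity \eqref{IB-2}.

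\textbf{Step 1: reduce to an elliptic bound.} From \eqref{IB-2}, together with $u\gamma(v,m)\ge 0$ and $0<\gamma\le\gamma_1$, $m\le m^*$, we get
\begin{equation*}
w_t\le \mathcal{A}^{-1}\big[u\gamma(v,m)+\alpha u m\big]\le \mathcal{A}^{-1}[u\gamma(v,m)]+\alpha m^* w .
\end{equation*}
We integrate this over $(t_0,t)$ with $t_0\in((t-\tau)_+,t)$. Since $t-t_0\le 1$, we obtain
\begin{equation*}
w(x,t)\le e^{\alpha m^*}\Big(w(x,t_0)+\int_{t_0}^{t}\mathcal{A}^{-1}[u\gamma(v,m)](x,s)\,ds\Big).
\end{equation*}
It remains to bound both terms on the right uniformly in $x$.

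\textbf{Step 2: the time-integrated term.} By Lemma \ref{L2B1}, $\int_t^{t+\tau}\int_\Omega u^2\gamma(v,m)\le C_1e^{C_2\theta_1}$. Using $\gamma\le\gamma_1$ we have $(u\gamma)^2\le\gamma_1 u^2\gamma$, so
\begin{equation*}
\int_{t_0}^{t}\|u\gamma(v,m)\|_{L^2}^2\,ds\le \gamma_1 C_1e^{C_2\theta_1}.
\end{equation*}
In two dimensions, elliptic regularity gives $\|\mathcal{A}^{-1}f\|_{L^\infty}\le c\|\mathcal{A}^{-1}f\|_{W^{2,2}}\le c\|f\|_{L^2}$. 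Combining this with Cauchy--Schwarz in time over an interval of length at most $1$ yields
\begin{equation*}
\int_{t_0}^{t}\|\mathcal{A}^{-1}[u\gamma]\|_{L^\infty}\,ds\le c\,e^{C_2\theta_1/2}.
\end{equation*}

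\textbf{Step 3: the initial term $w(\cdot,t_0)$ (main obstacle).} We only know $\|w\|_{H^1}\le c_7e^{c_8\theta_1}$ from \eqref{L2B1-13}, which does not control $L^\infty$ in 2D. My first attempt is to use the $\tau$-averaged bound from Lemma \ref{L2B1} in a different way. On $(t-\tau)_+\le s\le t$, write $u=\sqrt{u\gamma}\cdot\sqrt{u/\gamma}$ and use that $u^2\gamma$ is integrable in time, together with $\gamma^{-1}(v,m)\le\gamma^{-1}(v,0)\le be^{\chi v}+c$ and the exponential integrability from \eqref{LB-4} for $v\le(1+\varepsilon)(\theta_1+\theta_2m^*)S+\dots$. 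Exactly as in \eqref{L2B1-9}--\eqref{L2B1-11}, this gives
\begin{equation*}
\int_{t-\tau}^{t}\|u(\cdot,s)\|_{L^{q}}\,ds\le c\,e^{c\theta_1}\quad\text{for some } q>1 .
\end{equation*}
Then $\|w(\cdot,s)\|_{L^\infty}\le c\|u(\cdot,s)\|_{L^q}$, again by 2D elliptic regularity with $q>1$. We can therefore pick $t_0$ where $\|w(\cdot,t_0)\|_{L^\infty}\le c\,\tau^{-1}e^{c\theta_1}$; since $\tau$ is fixed, this is harmless.

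For $t<\tau$ we simply use $t_0=0$ and $\|w_0\|_{L^\infty}$. Collecting Steps 1--3 gives $\|w\|_{L^\infty}\le c\,e^{c'\theta_1}$, and \eqref{LvSB} then yields \eqref{L2B2-1}. The delicate point is verifying the H\"older bookkeeping in Step 3: the exponent $p$ must be chosen close to $1$ so that \eqref{L2B1-2} still permits \eqref{LB-4}.
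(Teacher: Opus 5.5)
Your proposal is correct and is essentially the paper's proof. Your Step 3 is exactly how the paper obtains $\int_t^{t+\tau}\|w\|_{L^\infty}\le c\,e^{c\theta_1}$ and a good time $t_0$: Hölder via $u=(u\gamma^{1/2})\cdot\gamma^{-1/2}$ (the factorization $\sqrt{u\gamma}\cdot\sqrt{u/\gamma}$ as you literally wrote it would be circular), then \eqref{L2B1-8}, \eqref{LvS1-1}, \eqref{LB-4}, $W^{2,q}\hookrightarrow L^\infty$ and Lemma \ref{L2B1}. Your Step 2 is harmless but unnecessary, since the paper simply uses $\mathcal{A}^{-1}[u\gamma(v,m)]\le\gamma_1 w$ to get $w_t\le(\gamma_1+\alpha m^*)w$, integrates from $t_0$, and uses \eqref{IB-3} for $t\le\tau$.
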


\begin{proof}
Using the continuous embedding of $W^{2, p}(\Omega) \hookrightarrow L^{\infty}(\Omega)$ ($p>1$) and the standard elliptic regularity, we derive from \eqref{Ew-1} that
\begin{equation}\label{wL}
    \|w(\cdot,t)\|_{L^\infty} \leq c_1 \|w(\cdot,t)\|_{W^{2,p}} \leq c_2 \|u(\cdot,t)\|_{L^{p}},
\end{equation}
with some $1<p<2$ such that $\frac{p}{2-p}$ is larger than 1 but near 1. Moreover, we can use the H{\"o}lder inequality to derive 
\begin{equation*}
\|u(\cdot,t)\|_{L^{p}}\leq \|u \gamma^{\frac{1}{2}}(v, m)\|_{L^{2}} \|\gamma^{-\frac{1}{2}}(v, m)\|_{L^{\frac{2 p}{2-p}}},
\end{equation*}
which substituted into \eqref{wL} gives
\begin{equation}\label{L2B2-2}
\|w(\cdot,t)\|_{L^\infty} \leq c_2 \|u \gamma^{\frac{1}{2}}(v, m)\|_{L^{2}} \|\gamma^{-\frac{1}{2}}(v, m)\|_{L^{\frac{2 p}{2-p}}}.
\end{equation}
Under the facts  \eqref{L2B1-8} and $v \leq (1 + \varepsilon) (\theta_1+\theta_2 m^*) S + (1 + \theta_1) K_\varepsilon$ for any $\varepsilon>0$ in \eqref{LvS1-1}, we obtain
\begin{equation}\label{L2B2-2-1}
\|\gamma^{-\frac{1}{2}}(v, m)\|_{L^{\frac{2 p}{2-p}}} \leq c_3\left[\int_\Omega \left(b e^{\chi(1+\theta_1)K_\varepsilon}\right)^\frac{p}{2-p} e^{\chi (1+\varepsilon) (\theta_1+\theta_2m^*) S \frac{p}{2-p}} +(\gamma^{-1}(s_b,0))^{\frac{p}{2-p}}\right]^{\frac{2-p}{2 p}}
\end{equation}
Noting $\|S(\cdot, t)\|_{L^1} = \|(u_0+\alpha m_0)\|_{L^1}:= \Lambda$ and using the condition \eqref{L2B1-2} that $$\Lambda=\Vert u_{0} + \alpha m_{0} \Vert_{L^{1}} < \frac{4\pi}{\chi (\theta_1 + \theta_2m^*)},$$
we can choose some $\varepsilon>0$ sufficient small such that
\begin{equation*}
    (1+\varepsilon) \frac{p}{2-p} \Lambda< \frac{4 \pi}{\chi (\theta_1+\theta_2m^*)}
\end{equation*}
and hence
\begin{equation}\label{L2B2-2*}
    \chi (1+\varepsilon) (\theta_1+\theta_2m^*)\frac{p}{2-p} <\frac{4\pi}{\Lambda}.
\end{equation}
Utilizing \eqref{L2B2-2*}, we infer from \eqref{LB-4} in Lemma \ref{LB} that
\begin{equation*}
\int_\Omega  e^{\chi (1+\varepsilon) (\theta_1+\theta_2m^*) S \frac{p}{2-p}} \leq c_6,
\end{equation*}
which substituted into \eqref{L2B2-2-1} gives
\begin{equation}\label{L2B2-2-2}
\|\gamma^{-\frac{1}{2}}(v, m)\|_{L^{\frac{2 p}{2-p}}}\leq c_7(e^{c_8\theta_1}+1)\leq c_9 e^{c_8\theta_1}.
\end{equation}
Combining \eqref{L2B2-2} and \eqref{L2B2-2-2}, and using Young's inequality, we have
\begin{equation}\label{L2B2-3}
\|w(\cdot,t)\|_{L^\infty} \leq c_9 e^{c_8\theta_1} \left(\int_{\Omega} u^2 \gamma(v, m)\right)^{\frac{1}{2}} \leq \int_{\Omega} u^2 \gamma(v, m)+ c_{10}e^{c_{11} \theta_1}
\end{equation}
With \eqref{L2B1-3} in hand, we can derive from \eqref{L2B2-3} that
    \begin{equation*}
    \int_{t}^{t+\tau} \|w(\cdot,s)\|_{L^\infty} \leq \int_{t}^{t+\tau} \int_{\Omega} u^2 \gamma(v, m) d x d s  + c_{10}e^{c_{11} \theta_1} \leq c_{12} e^{c_{13}\theta_1}, \quad \text{for all} \ t\in (0,T_{\max} - \tau),
    \end{equation*}
    where $\tau$ is defined in \eqref{DT} and hence for any fixed $x \in \Omega$,
    \begin{equation}\label{L2B2-3*}
    \int_{t}^{t+\tau} w(x,s) d s \leq \int_{t}^{t+\tau} \|w(\cdot,s)\|_{L^\infty} d s \leq c_{12} e^{c_{13}\theta_1}.
    \end{equation}
    Moreover, in view of $0\leq m\leq m^*$ in \eqref{L1-2} and $0<\gamma(v,m)\leq \gamma_1$ in \eqref{Bg1}, applying the elliptic comparison principle to \eqref{IB-2}, we obtain
    \begin{equation*}
        w_{t} + u \gamma(v, m) = \mathcal{A}^{-1} [u \gamma(v, m) + \alpha u m] \leq (\gamma_1 + \alpha m^*) w,
    \end{equation*}
    which, in conjunction with the non-negativity of $u$, implies
    \begin{equation}\label{L2B2-3a}
        w_{t}\leq (\gamma_1 + \alpha m^*) w.
    \end{equation}
    For any $t\in (0,T_{\max})$, we deduce from \eqref{L2B2-3*} that there exists $t_0\in ((t-\tau)_+,t)$ satisfying $t_0\geq 0$ and $t_0\in (0,T_{\max} - \tau)$, such that
    \begin{equation}\label{L2B2-3b}
        w(\cdot,t_0)\leq c_{14} e^{c_{13}\theta_1}.
    \end{equation}
    Then integrating \eqref{L2B2-3a} over $(t_0,t)$ and employing \eqref{L2B2-3b} and $t\leq t_0+\tau\leq t_0+1$, we obtain 
    \begin{equation}\label{L2B2-3c}
        w(\cdot,t) \leq w(\cdot,t_0) e^{\int_{t_0}^t (\gamma_1 + \alpha m^*) ds} \leq c_{15} e^{c_{13}\theta_1}.
    \end{equation}
    Owing to \eqref{IB-3} for $t \in [0,\tau]$ and \eqref{L2B2-3c}, we infer that
    \begin{equation}\label{L2B2-4}
        w(x, t) \leq c_{16} e^{c_{13}\theta_1}, \quad \text{for} \ t\in[0,T_{\max}).
    \end{equation}
    Hence, it follows from \eqref{L2B2-4} and $v\leq (1+\theta_1)\mathcal{K}_1 w$ in \eqref{LvSB} that
    \begin{equation*}
        v(x, t) \leq (1+\theta_1) \mathcal{K}_1 w(x,t) \leq c_{17}(1+\theta_1) e^{c_{13}\theta_1}, \quad \text{for} \ t \in[0,T_{\max}),
    \end{equation*}
    which yields \eqref{L2B2-1}.
\end{proof}

\begin{remark}
    For any bounded positive constant $\theta_1$, we can assume without loss of generality that $0 < \theta_1 \leq c_1$. Consequently, from Lemma \ref{L2B2}, we know that there exists a constant $c_2 > 0$ independent of $\theta_1$ such that 
    \begin{equation}\label{Lu2-a1-1}
        0 < v \leq c_2.
    \end{equation}
    For clarity, we denote $\gamma_0$ as the lower bound of $\gamma(v,m)$. Under the assumption (H1) on $\gamma(v,m)$ and using \eqref{Lu2-a1-1}, there exist constants $\gamma_i (i=0,1,2,3)$ independent of $t$ and $\theta_1$ such that 
    \begin{equation}\label{Lu2-a1-2}
        0 < \gamma_0 \leq \gamma(v,m)\leq \gamma_1, \quad |\gamma_v(v,m)| \leq \gamma_2, \quad \text{and} \quad |\gamma_m(v,m)| \leq \gamma_3.
    \end{equation}
\end{remark}
Next, we shall use arguments similar to those for $\theta_1=0$ to obtain the boundedness of $\|u(\cdot,t)\|_{L^\infty}$. For brevity, we only present the outline and omit the details of the proof. More precisely, we establish the following results.
\begin{lemma}\label{LIa}
    Suppose that the conditions in Lemma \ref{L2B1} hold. Let $\theta_1>0$ and $(u, v, m)$ be the solution of the system \eqref{sys-1} obtained in Lemma \ref{LS}. Then it holds that
    \begin{equation*}
        \|u(\cdot, t)\|_{L^{\infty}} \leq M:=C_{5} (1+\theta_1)^{12} e^{C_{6} (1+\theta_1)^6} \quad \text{for all} \ t \in (0, T_{\max}),
    \end{equation*}
    where the constants $C_{5},C_{6} > 0$ are independent of $t$ and $\theta_1$.
\end{lemma}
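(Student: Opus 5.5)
We would follow the scheme of Section 3 (the case $\theta_1=0$) step by step. The only new ingredient is that the bounds \eqref{Lu2-a1-1}--\eqref{Lu2-a1-2} on $v$ and on $\gamma$ now come from Lemma \ref{L2B2}. We keep careful track of how every constant depends on $\theta_1$. By Lemma \ref{L2B2}, $0<v\le c(1+\theta_1)e^{c\theta_1}$. Since $\gamma_v\le0$ and $\gamma_m\ge0$, the lower bound satisfies $\gamma_0\ge \gamma(c(1+\theta_1)e^{c\theta_1},0)$. The derivative bounds $\gamma_2,\gamma_3$ are sup-norms of $\gamma_v,\gamma_m$ on the compact set $[0,\|v\|_{L^\infty}]\times[0,m^*]$. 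For $\theta_1$ ranging over a bounded set these constants are uniform, as in the Remark. We then record them as growing at most like $e^{c(1+\theta_1)^k}$, which is where the final form of $M$ originates.

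\textbf{Step 1 ($L^2$-in-time bound).} We repeat Lemma \ref{Lut} using the $\mathcal{B}^{-1}$ duality argument. The source in the $v$-equation is now $(\theta_1+\theta_2 m)u$, but Lemma \ref{Lut} does not use the $v$-equation, so it carries over verbatim. It yields $\int_t^{t+\tau}\int_\Omega u^2\le C(1+\gamma_1^2)/\gamma_0$.

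\textbf{Step 2 (gradient bounds).} For Lemma \ref{Lvm}, we test the $v$-equation by $-\Delta v$. The right-hand side now becomes $(\theta_1+\theta_2m^*)^2\int_\Omega u^2$. This gives $\|\nabla v\|_{L^2}^2+\int_t^{t+\tau}\|\Delta v\|_{L^2}^2\le C(1+\theta_1)^2/\gamma_0$. The $m$-estimates are unchanged.

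\textbf{Step 3 (uniform $L^2$ bound for $u$).} We repeat Lemma \ref{Lu2}: a Gagliardo--Nirenberg estimate of $\int u^2(|\nabla v|^2+|\nabla m|^2)$ followed by the ODE argument starting from a good time $t_0\in((t-\tau)_+,t)$. The resulting constant is exponential in a polynomial of the previous constants. This step is where the term $e^{C(1+\theta_1)^6}$ should appear: the coefficient $c_4\sim(\gamma_2^2+\gamma_3^2)^2\gamma_0^{-3}\|\nabla v\|_{L^2}^2$ is combined with the time integral of $\|\Delta v\|_{L^2}^2$.

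\textbf{Step 4 ($L^4$ and gradient bounds).} For Lemma \ref{Lu4}, we use the variation-of-constants formula for $v$ with source $(\theta_1+\theta_2m)u$. This gives $\|\nabla v\|_{L^4}\le C(1+\theta_1)\|u\|_{L^\infty_tL^2}$, and the $L^4$ energy estimate of $u$ follows exactly as before.

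\textbf{Step 5 ($L^\infty$ bounds).} As in Lemma \ref{LI}, the semigroup estimates give $\|\nabla v\|_{L^\infty}+\|\nabla m\|_{L^\infty}\le C(1+\theta_1)\sup_t\|u\|_{L^4}$. Moser--Alikakos iteration on \eqref{LI-6} then yields $\|u\|_{L^\infty}$ bounded by a constant that depends polynomially on $c_7$ and on $\sup_t\|u\|_{L^1}$. Collecting the powers produces $(1+\theta_1)^{12}$ times the exponential factor from Step 3.

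\textbf{Main obstacle.} The hard part is the bookkeeping rather than any single estimate. In particular, $\gamma_0$ can degenerate like $e^{-\chi v}$ with $v\lesssim(1+\theta_1)e^{c\theta_1}$. Since $\theta_1$ is only needed in a bounded range (the Remark after Lemma \ref{L2B2}), our plan is to freeze $\gamma_0,\gamma_1,\gamma_2,\gamma_3$ as $\theta_1$-independent, as in \eqref{Lu2-a1-2}. We then track only the explicit $\theta_1$-dependence entering through the source $(\theta_1+\theta_2 m)u$. This dependence enters as $(1+\theta_1)^2$ in Steps 2 and 4 and gets squared and cubed through the Gagliardo--Nirenberg and Young steps, which gives the stated form of $M$.
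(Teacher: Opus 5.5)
Your proposal is correct and follows the paper's proof essentially step for step. You freeze $\gamma_0,\dots,\gamma_3$ for $\theta_1$ in a bounded range, as in the Remark after Lemma~\ref{L2B2}, and rerun Lemmas~\ref{Lut}, \ref{Lvm}, \ref{Lu2}, \ref{Lu4} and \ref{LI}. You track only the explicit $(1+\theta_1)$-factors from the source $(\theta_1+\theta_2 m)u$, with the exponential factor arising in the Gronwall step of Lemma~\ref{Lu2}, exactly as the paper does.

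Drop your opening aside that the $\gamma_i$ can be recorded as growing like $e^{c(1+\theta_1)^k}$. From $v\lesssim(1+\theta_1)e^{c\theta_1}$ you only get $\gamma_0^{-1}$ doubly exponential in $\theta_1$, and no control at all on $\gamma_2,\gamma_3$ for a merely $C^3$ motility function. The freezing you settle on in the end is exactly what the paper does.
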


\begin{proof}
    By virtue of \eqref{Lu2-a1-2}, we can adopt arguments similar to those used in proving Lemma \ref{Lut} to show that there exists a constant $c_1 > 0$ independent of $t$ such that
    \begin{equation}\label{LIa-1}
        \int_{t}^{t + \tau} \int_{\Omega} u^{2} \leq c_1,
    \end{equation}
    where
    \begin{equation*}
        \tau := \min \left\{1, \frac{1}{2} T_{\max}\right\} \quad \text{and} \quad \widetilde{T}_{\max} := T_{\max} - \tau.
    \end{equation*}
    Analogously to the proof of Lemma \ref{Lvm}, we derive the following inequalities:
    \begin{equation}\label{LIa-2}
        \frac{d}{dt} \int_{\Omega} |\nabla v|^{2} + 2 \int_{\Omega} |\nabla v|^{2} + \int_{\Omega} |\Delta v|^{2} \leq \int_{\Omega} [(\theta_1 + \theta_2 m)u]^{2} \leq (1 + \theta_1)^2 (1 + \theta_2 m^*)^2 \int_\Omega u^2,
    \end{equation}
    and
    \begin{equation}\label{LIa-3}
        \frac{d}{dt}\int_\Omega |\nabla m|^2 + 2\int_\Omega |\nabla m|^2 + \int_\Omega |\Delta m|^2 \leq 2\|m_0\|^2_{L^\infty}\int_\Omega u^2 + 2\|m_0\|^2_{L^\infty}|\Omega|.
    \end{equation}
    Invoking \eqref{LIa-1} and applying Lemma \ref{LB3} to \eqref{LIa-2} and \eqref{LIa-3}, we find two positive constants $c_2$ and $c_3$ independent of $\theta_1$ and $t$ such that
    \begin{equation}\label{LIa-4}
        \int_{\Omega} |\nabla v|^{2} + \int_\Omega |\nabla m|^2 \leq c_2 (1 + \theta_1)^2  \quad \text{for all} \ t \in (0, T_{\max}),
    \end{equation}
    and
    \begin{equation}\label{LIa-5}
        \int_{t}^{t + \tau} \int_{\Omega} |\Delta v|^{2} + \int_{t}^{t + \tau} \int_{\Omega} |\Delta m|^{2} \leq c_3(1 + \theta_1)^2 \quad \text{for all} \ t \in (0, \widetilde{T}_{\max}).
    \end{equation}
    Using \eqref{LIa-4} and \eqref{LIa-5}, we employ arguments similar to those in Lemma \ref{Lu2} to obtain
    \begin{equation*}
        \|u(\cdot, t)\|_{L^{2}} \leq c_4 e^{c_5 (1 + \theta_1)^6 }.
    \end{equation*}
    Finally, following the standard steps in Lemmas \ref{Lu4} and \ref{LI}, we conclude that
    \begin{equation*}
        \|u(\cdot, t)\|_{L^\infty} \leq c_6 (1 + \theta_1)^{12} e^{c_7 (1 + \theta_1)^6 }.
    \end{equation*}
    This completes the proof of Lemma \ref{LIa}.
\end{proof}

\begin{proof}[\textbf{Proof of Theorem \ref{GBS-2}: Boundedness}]
    For any $\theta_1>0$, from Lemma \ref{LIa}, we can find a constant $M>0$ independent of $t$ such that $\|u(\cdot,t)\|_{L^\infty}\leq M$, which combined with the local existence results in Lemma \ref{LS} proves the existence of global classical solution with uniform-in-time bound stated in Theorem \ref{GBS-2}.
\end{proof}

\subsection{Global stabilization: $\theta_1>0$}
In this subsection, we proceed to analyze the asymptotic behavior of solutions for the system \eqref{sys-1} with $\theta_1>0$. First, we should point out that for $\theta_1>0$, the result in Lemma \ref{Lmi} still holds and hence 
\begin{equation}\label{Cw}
    \lim\limits_{t\to\infty}\|m(\cdot,t)\|_{L^\infty}=0.
\end{equation}
Henceforth, we only focus on the convergence of $u$ and $v$. In fact, we establish the following results:
\begin{lemma}\label{Lu2-a}
    Let $(u,v,m)$ be the solution of the system \eqref{sys-1} with $\theta_1>0$ obtained in Theorem \ref{GBS-2}. Then there exists a $\theta_*>0$ such that if $0<\theta_1\leq \theta_*$ we have
\begin{equation}\label{Lu2-a1}
        \lim_{t\to \infty} (\|u(\cdot, t) - u_*\|_{L^\infty} + \|v(\cdot, t)- \theta_1 u_*\|_{L^\infty}) = 0,
    \end{equation}
    where $u_* = \frac{1}{|\Omega|} \int u_0 + \frac{\alpha}{|\Omega|} \int_{\Omega} m_0$.
\end{lemma}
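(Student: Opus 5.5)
We will build a Lyapunov-type functional coupling $u-u_*$ and $v-\theta_1 u_*$. The constant $\theta_*$ is then fixed by making the cross term produced by $\theta_1$ absorbable. The starting point is the remark preceding Lemma \ref{LIa}: when $0<\theta_1\le c_1$, the bounds \eqref{Lu2-a1-2} on $\gamma,\gamma_v,\gamma_m$ hold with constants independent of $\theta_1$. Likewise, Lemma \ref{LIa} gives $\|u\|_{L^\infty}\le M$ with $M$ uniform for $\theta_1\le c_1$. By the analogue of Lemma \ref{Rg} we also have uniform $C^{2+\sigma,1+\sigma/2}$ bounds for $t\ge1$. Finally, \eqref{Cw} gives $\|m(\cdot,t)\|_{L^\infty}\to0$.

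First we test the $u$-equation with $u-u_*$, exactly as in Lemma \ref{Lu2-c}. Using $\gamma\ge\gamma_0$, $|\gamma_v|\le\gamma_2$, $|\gamma_m|\le\gamma_3$ and $u\le M$, we obtain
\[
\frac{d}{dt}\int_\Omega (u-u_*)^2+\gamma_0\int_\Omega|\nabla u|^2\le \frac{2M^2\gamma_2^2}{\gamma_0}\int_\Omega|\nabla v|^2+\frac{2M^2\gamma_3^2}{\gamma_0}\int_\Omega|\nabla m|^2+2M^2\alpha\int_\Omega m .
\]
The difference from the case $\theta_1=0$ is that $\int_\Omega|\nabla v|^2$ no longer vanishes in the limit for free. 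Next we test the $v$-equation with $-\Delta v$, or equivalently test the equation for $\tilde v:=v-\theta_1u_*$, which reads $\tilde v_t=\Delta\tilde v-\tilde v+\theta_1(u-u_*)+\theta_2mu$. Writing $\theta_1 u=\theta_1(u-u_*)+\theta_1u_*$ and integrating $\theta_1\int\nabla u\cdot\nabla v$ by Young's inequality gives
\[
\frac{d}{dt}\int_\Omega|\nabla v|^2+\int_\Omega|\nabla v|^2\le \theta_1^2\int_\Omega|\nabla u|^2+c\int_\Omega|\nabla m|^2+c\int_\Omega m ,
\]
where the $m$-terms come from $\theta_2\nabla(mu)$ together with the uniform $W^{1,\infty}$ bounds, so that $|\nabla(mu)|^2\le c(|\nabla m|^2+m)$. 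We multiply this inequality by $\kappa:=\gamma_0^2/(4M^2\gamma_2^2)$ and add it to the $u$-inequality. The term $\kappa\theta_1^2\int|\nabla u|^2$ is absorbed by $\frac{\gamma_0}{2}\int|\nabla u|^2$ once $\theta_1\le\theta_*$, where
\[
\theta_*:=\min\Bigl\{c_1,\ \frac{2M\gamma_2}{\sqrt{2\gamma_0}}\Bigr\}
\]
(any explicit choice making $\kappa\theta_1^2\le\gamma_0/2$ works). This absorption step is the main obstacle. It is essential that $M$, $\gamma_0$ and $\gamma_2$ do not depend on $\theta_1$ on $(0,c_1]$; this is why we keep track of the $\theta_1$-dependence in Lemma \ref{LIa} and in the remark before it.

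Adding also the $m$-estimate \eqref{Lu2-c8} and the bound $\int|\nabla m|^2$ controlled by it (via Lemma \ref{Lm}), we use Poincaré as in \eqref{Lu2-c5}. Here $\bar u=u_*-\alpha\bar m$, so $\int(u-u_*)^2\le 2C_p\int|\nabla u|^2+c\int m$. We also bound $\int(v-\theta_1u_*)^2$: integrating the $v$-equation gives $\frac{d}{dt}\int(v-\theta_1 u_*)+\int(v-\theta_1u_*)=\theta_1\int(u-u_*)+\theta_2\int mu$, and $\int(u-u_*)=-\alpha\int m$, so the mean of $v-\theta_1u_*$ tends to zero. Its oscillation is controlled by $\int|\nabla v|^2$. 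Together these yield an inequality $Y'+cY\le c\|m(\cdot,t)\|_{L^\infty}+h(t)$, where $Y=\int(u-u_*)^2+\kappa\int|\nabla v|^2+\int m^2$ and $h\in L^1(0,\infty)$ comes from $\int|\nabla m|^2$ (Lemma \ref{Lm}). Since $\|m\|_{L^\infty}\to0$ and $h$ is integrable, we get $Y(t)\to0$.

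Hence $\|u-u_*\|_{L^2}\to0$ and $\|v-\theta_1u_*\|_{L^2}\to0$, the latter from the gradient bound plus the mean convergence. The uniform $W^{1,\infty}$ bounds for $t\ge1$ and the Gagliardo--Nirenberg inequality $\|f\|_{L^\infty}\le c\|f\|_{W^{1,\infty}}^{2/3}\|f\|_{L^2}^{1/3}$ then upgrade this to \eqref{Lu2-a1}.
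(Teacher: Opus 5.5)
The absorption step, which you yourself identify as the crux, does not close with your choice of $\kappa$. After you add $\kappa$ times your $\nabla v$-inequality to the $u$-inequality, the right-hand side still carries $\frac{2M^2\gamma_2^2}{\gamma_0}\int_\Omega|\nabla v|^2$. The only term that can absorb it is the damping $\kappa\int_\Omega|\nabla v|^2$ from the $v$-inequality. So two conditions are needed at once: $\kappa>\frac{2M^2\gamma_2^2}{\gamma_0}$ and $\kappa\theta_1^2<\gamma_0$. You only checked the second. With $\kappa=\gamma_0^2/(4M^2\gamma_2^2)$, the first condition holds only if $\gamma_0^3>8M^4\gamma_2^4$, and there is no reason to expect that. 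As written, the $\int_\Omega|\nabla v|^2$ term is uncontrolled, and $Y'+cY\le c\|m(\cdot,t)\|_{L^\infty}+h(t)$ does not follow.

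A warning sign is that your $\theta_*=\frac{2M\gamma_2}{\sqrt{2\gamma_0}}$ increases with $M\gamma_2$. A smallness threshold for a coupling of strength $M\gamma_2/\gamma_0$ should decrease. The repair is to take $\kappa$ large, e.g. $\kappa=\frac{4M^2\gamma_2^2}{\gamma_0}$, so that $\frac{\kappa}{2}\int_\Omega|\nabla v|^2$ survives as damping. Then demand $\kappa\theta_1^2\le\frac{\gamma_0}{2}$, i.e. $\theta_*=\min\{c_1,\frac{\gamma_0}{2\sqrt{2}M\gamma_2}\}$, with $M,\gamma_0,\gamma_2$ the values uniform on $(0,c_1]$. (Young's inequality actually puts a factor $2$ in front of $\theta_1^2$ in your $v$-inequality; this only changes the constant.) It is exactly the tension between ``$\kappa$ large'' and ``$\kappa\theta_1^2$ small'' that forces $\theta_1$ to be small.

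With this correction your argument works, and it is a genuinely different route from the paper's. The paper tests the $v$-equation with $v-\theta_1u_*$, an $L^2$-level estimate \eqref{Lu2-a4}. After multiplication by $\frac{M^2(\gamma_2^2+\gamma_3^2)}{\gamma_0}$, its dissipation $2\int_\Omega|\nabla v|^2$ exactly cancels the $\nabla v$ term, and \eqref{Lu2-a5} likewise cancels the $\nabla m$ term. Its cross term $\theta_1^2\int_\Omega(u-u_*)^2$ is absorbed by the Poincar\'e damping $\frac{\gamma_0}{2C_p}\int_\Omega(u-u_*)^2$. That route gives $\|v-\theta_1u_*\|_{L^2}\to0$ directly and needs only $u\le M$ to handle $\theta_2um$.

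Your $H^1$-level estimate instead absorbs $\theta_1^2\int_\Omega|\nabla u|^2$ straight into the $u$-dissipation, so $C_p$ and $\gamma_3$ do not enter $\theta_*$. The price is three extra ingredients:
\begin{itemize}
\item a uniform bound on $\|\nabla u\|_{L^\infty}$ for $t\ge1$, to control $\theta_2\nabla(mu)$;
\item the integrability of $\int_\Omega|\nabla m|^2$ from Lemma \ref{Lm};
\item a separate ODE argument for the mean of $v-\theta_1u_*$.
\end{itemize}
These auxiliary steps are correct as you describe them.
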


\begin{proof}
    As in Lemma \ref{Lu2-c}, we first rewrite the first equation of the system \eqref{sys-1} as 
    $$(u - u_{*})_{t} = \Delta (u \gamma (v, m)) + \alpha u m,$$
    and then multiply it by $u - u_{*}$ and integrate it by parts to derive that 
    \begin{equation*}
        \frac{1}{2} \frac{d}{d t} \int_{\Omega} (u-u_{*})^{2} + \int_{\Omega}\gamma(v, m) |\nabla u|^{2} = - \int_{\Omega} \gamma_{v} u \nabla u \cdot \nabla v - \int_{\Omega} \gamma_{m} u \nabla u \cdot \nabla m + \alpha \int_{\Omega} u m (u-u_{*}),
    \end{equation*}
    which, together with the facts \eqref{Lu2-a1-2} and $\|u(\cdot,t)\|_{L^\infty}\leq M$ in Lemma \ref{LIa}, gives
    \begin{equation*}
        \begin{split}
            &\frac{d}{d t} \int_{\Omega} (u - u_{*})^{2} + 2\gamma_{0} \int_{\Omega} |\nabla u|^{2} \\
            &\leq 2 M\gamma_2\int_\Omega |\nabla u||\nabla v| + 2 M\gamma_3 \int_\Omega |\nabla u| |\nabla m| + 2 \alpha M^2 \int_{\Omega} m\\
            &\leq \gamma_0 \int_\Omega |\nabla u|^2 + \frac{2 M^2(\gamma_2^2 + \gamma_3^2)}{\gamma_0} \int_\Omega (|\nabla v|^2+|\nabla m|^2)+2 \alpha M^2 \int_{\Omega} m,
        \end{split}
    \end{equation*}
    and then 
    \begin{equation}\label{Lu2-a1-3}
        \begin{split}
            \frac{d}{d t} \int_{\Omega} (u - u_{*})^{2} + \gamma_{0} \int_{\Omega} |\nabla u|^{2} 
            &\leq \frac{2M^2(\gamma_2^2+\gamma_3^2)}{\gamma_0}\int_\Omega (|\nabla v|^2+|\nabla m|^2)+2 \alpha M^2 \int_{\Omega} m.
            \end{split}
		\end{equation}
    Recalling that $\bar{u}=\frac{1}{|\Omega|}\int_\Omega u= u_*- \frac{\alpha}{|\Omega|} \int_\Omega m=u_*-\alpha \bar{m}$, we use the Poincar$\acute{\text{e}}$ inequality with the constant $C_{p} > 0$ to find
    \begin{equation*}
        \int_{\Omega} (u - u_{*})^{2} \leq 2 \int_{\Omega} (u - \bar{u})^{2} + 2 \alpha^{2} \int_{\Omega} \bar{m}^2 \leq 2 C_{p} \int_{\Omega} |\nabla u|^{2} + 2 \alpha^{2}m^*\int_{\Omega} m,
    \end{equation*}
    which, multiplied by $\frac{\gamma_0}{2 C_p}$ and substituted into \eqref{Lu2-a1-3}, yields
    \begin{equation}\label{Lu2-a2}
        \begin{split}
            \frac{d}{d t} \int_{\Omega} (u - u_{*})^{2} + \frac{\gamma_0}{2 C_p}\int_{\Omega} (u - u_{*})^{2} \leq & \frac{2 M^2 (\gamma_2^2+\gamma_3^2)}{\gamma_0}\int_{\Omega} (|\nabla v|^{2}+ |\nabla m|^{2})\\
            & + \frac{\gamma_0 \alpha^2 m^* + 2 \alpha M^2 C_p}{C_p}\int_\Omega m.
        \end{split}
    \end{equation}
    Moreover, we rewrite the second equation of the system \eqref{sys-1} as
    \begin{equation}\label{Lu2-a3}
        (v - \theta_{1} u_{*})_{t} = \Delta v - (v - \theta_{1} u_{*}) + \theta_{1} (u - u_{*}) + \theta_{2} u m,
    \end{equation}
    then multiply \eqref{Lu2-a3} by $(v - \theta_{1} u_{*})$ and integrate it by parts to obtain
    \begin{equation*}
        \begin{split}
            &\frac{1}{2} \frac{d}{d t} \int_{\Omega} (v - \theta_{1} u_{*})^{2} + \int_{\Omega} |\nabla v|^{2}\\
            &\leq  -\int_{\Omega} (v - \theta_{1} u_{*})^{2} +  \theta_{1} \int_{\Omega} (u - u_{*}) (v - \theta_{1} u_{*}) + \theta_{2}  \int_{\Omega} u m (v - \theta_{1} u_{*}) \\
            &\leq - \frac{1}{2}\int_{\Omega} (v - \theta_{1} u_{*})^{2}+ \frac{\theta_{1}^2}{2} \int_{\Omega} (u - u_{*})^2+ c_2\theta_2 M \int_\Omega m,
        \end{split}
    \end{equation*}
    which implies
    \begin{equation}\label{Lu2-a4}
        \frac{d}{d t} \int_{\Omega} (v - \theta_{1} u_{*})^{2} +\int_{\Omega} (v - \theta_{1} u_{*})^{2} + 2\int_{\Omega} |\nabla v|^{2}\leq \theta_{1}^2\int_{\Omega} (u - u_{*})^2+2c_2\theta_2 M \int_\Omega m.
    \end{equation}
    On the other hand, multiplying the third equation of \eqref{sys-1} by $m$ and integrating it by parts, we derive
    \begin{equation*}
        \frac{d}{d t} \int_\Omega m^2 = -2 \int_\Omega |\nabla m|^2 -2 \int_\Omega u m^2,
    \end{equation*}
    which, combined with $0\leq m(x,t)\leq m^*$, gives
    \begin{equation}\label{Lu2-a5}
        \frac{d}{d t} \int_\Omega m^2 +\int_\Omega m^2+2 \int_\Omega |\nabla m|^2\leq  m^* \int_\Omega m.
    \end{equation}
    Therefore, multiplying \eqref{Lu2-a4} and \eqref{Lu2-a5} by $\frac{M^2(\gamma_2^2+\gamma_3^2)}{\gamma_0}$ respectively, and adding them into \eqref{Lu2-a2}, we have 
    \begin{equation}\label{Lu2-a6}
        \begin{split}
            &\frac{d}{dt}\left\{\int_\Omega (u-u_*)^2+\frac{M^2(\gamma_2^2+\gamma_3^2)}{\gamma_0}\int_\Omega [(v-\theta_1 u_*)^2+ m^2]\right\}\\
            &\ \ \ +\left[\frac{\gamma_0}{2C_p}-\frac{M^2 (\gamma_2^2+\gamma_3^2)\theta_1^2}{\gamma_0}\right]\int_\Omega (u-u_*)^2+\frac{M^2(\gamma_2^2+\gamma_3^2)}{\gamma_0}\int_\Omega [(v-\theta_1 u_*)^2+m^2]\\
            &\leq \left[\frac{M^2(\gamma_2^2+\gamma_3^2)(2c_2\theta_2 M +m^*)}{\gamma_0} +\frac{\gamma_0\alpha m^*+2\alpha M^2 C_p}{C_p}\right]\int_\Omega m.
        \end{split}
    \end{equation}
    By the definitions of $M$ in Lemma \ref{LIa}, and $\gamma_0,\gamma_2$ and $\gamma_3$ in \eqref{Lu2-a1-2}, we derive that there exists a constant $\theta_*>0$ such that if $0<\theta_1< \theta_*$ that 
    \begin{equation}\label{Lu2-a7}
        M_1:=\frac{\gamma_0}{2C_p}-\frac{M^2(\gamma_2^2+\gamma_3^2)\theta_1^2}{\gamma_0}>0.
    \end{equation}
    Letting 
    $$Y(t):=\int_\Omega (u-u_*)^2+\frac{M^2(\gamma_2^2+\gamma_3^2)}{\gamma_0}\int_\Omega [(v-\theta_1 u_*)^2+ m^2],$$
    then utilizing \eqref{Lu2-a6} and noting \eqref{Lu2-a7}, we can find a constant $c_3>0$ such that
    \begin{equation*}
       Y'(t)+c_3Y(t)\leq \left[\frac{M^2(\gamma_2^2+\gamma_3^2)(2c_2\theta_2 M +m^*)}{\gamma_0} +\frac{\gamma_0\alpha m^*+2\alpha M^2 C_p}{C_p}\right] |\Omega| \|m(\cdot,t)\|_{L^\infty},
    \end{equation*}
    which, coupled with $\lim\limits_{t\to\infty}\|m(\cdot,t)\|_{L^\infty}=0$, gives
    \begin{equation}\label{Lu2-a9}
        \lim_{t\to \infty} (\|u(\cdot, t) - u_*\|_{L^2} + \|v(\cdot, t)- \theta_1 u_*\|_{L^2}) = \lim_{t\to\infty} Y(t)=0.
    \end{equation}
    Applying the parabolic regularity, we obtain that 
    \begin{equation}\label{Lu2-a10}
        \|u(\cdot,t)\|_{W^{1,\infty}}+\|v(\cdot,t)\|_{W^{1,\infty}}\leq c_4\ \ \mathrm{for} \ \ t\geq 1.
    \end{equation}
    Hence, we invoke the Gagliardo-Nirenberg inequality to conclude \eqref{Lu2-a1} by \eqref{Lu2-a9} and  \eqref{Lu2-a10}.
\end{proof}

\begin{proof}[\textbf{Proof of Theorem \ref{GBS-2}}: Stabilization.] From \eqref{Cw} and Lemma \ref{Lu2-a}, we complete the global stabilization of the constant steady state $(u_*,\theta_1 u_*,0)$.
\end{proof}
\noindent \textbf{Acknowledgment.}
The research of H.Y. Jin was supported by the NSF of China (No. 12371203), Guandong Basic and Applied Basic Research Foundation (No.
2026B0303000003).\\

\noindent \textbf{Conflict of interest}. The authors do not have any possible conflict of interest.\\

\noindent \textbf{Data availability statement}. No data sets were generated or analysed during the current study.\\

\end{document}